\DeclareMathOperator*{\argmin}{arg\,min}
\newcommand{\ba}{{\bf a}}
\newcommand{\bb}{{\bf b}}
\newcommand{\bd}{{\bf d}}
\newcommand{\bh}{{\bf h}}
\newcommand{\bu}{{\bf u}}
\newcommand{\bw}{{\bf w}}
\newcommand{\bx}{{\bf x}}
\newcommand{\bz}{{\bf z}}
\newcommand{\bba}{{\bf A}}
\newcommand{\bbd}{{\bf D}}
\newcommand{\bbk}{{\bf K}}
\newcommand{\bbw}{{\mathcal{W}}}
\newcommand{\bbz}{{\bf Z}}
\newcommand{\Exp}{\mathbb{E}}
\newcolumntype{P}[1]{>{\centering\arraybackslash}m{#1}}
\DeclareMathOperator{\E}{\mathbb{E}}
\journalname{Mathematical Programming}
\begin{document}
\renewcommand{\thepage}{\arabic{page}}

\title{Multistage Stochastic Optimization via Kernels
}


\author{Dimitris Bertsimas         \and
        Kimberly Villalobos Carballo 
}


\institute{D. Bertsimas \at
              Boeing Professor of Operations Research \at Massachusetts Institute of Technology, Cambridge, MA 02139 \\
              \email{dbertsim@mit.edu}           
           \and
           K. Villalobos Carballo \at
              Operations Research Center, Massachusetts Institute of Technology, Cambridge, MA 02139\\
              \email{kimvc@mit.edu}
}

\date{Received: date / Accepted: date}

\maketitle

\begin{abstract}
We develop a non-parametric, data-driven, tractable approach for solving multistage stochastic optimization problems in which decisions do not affect the uncertainty. The proposed framework represents the decision variables as elements of a reproducing kernel Hilbert space and performs functional stochastic gradient descent to minimize the empirical regularized loss. By incorporating sparsification techniques based on function subspace projections we are able to overcome the computational complexity that standard kernel methods introduce as the data size increases. We prove that the proposed approach is asymptotically optimal for multistage stochastic optimization  with side information. Across various computational experiments on stochastic inventory management problems, {our method performs well in multidimensional settings} and remains tractable when the data size is large. Lastly, by computing lower bounds for the optimal loss of the inventory control problem, we show that the proposed method produces decision rules with near-optimal average performance.
\keywords{data-drive optimization \and kernel methods \and prescriptive analytics \and orthogonal matching pursuit}
\end{abstract}
\section{Introduction}
Multistage stochastic optimization arises in numerous applications (e.g., supply chain management, energy planning, inventory management among others) and remains an important research area in the optimization community \citep{Birge2011, shapiro2014lectures, bertsimas2011theory}. In these problems, the decision variables are split across multiple periods and decisions are made sequentially as more information becomes available. The goal is to make high quality decisions that minimize the expectation of a given cost function by accurately modeling future uncertainty.  In practice, decision makers can use historical data to get a sense of the future uncertainty. For instance, consider a retailer selling products with short life cycles who needs to make frequent orders to restock inventory without knowing the future demands. To minimize costs the retailer must use the remaining inventory quantities as well as historical data to gain insight into future demands. Another example is energy planning, in which operators decide daily production levels without knowing how weather conditions will affect the output of the wind turbines. In this case historical wind patterns are valuable for better planning.

Besides historical data, auxiliary covariates are often available and can help predict uncertainty. For example, in the fashion industry, color and brand are useful factors to predict demand of a new item. Accordingly, recent work has focused on using predictive analytics to leverage available side information and historical data to make better decisions. \cite{ban2019dynamic} for instance, fit covariate and historical data to a regression model and prove theoretical guarantees for the dynamic procurement problem. Another approach is that of \cite{bertsimas2018data}, which considers an uncertainty set around each data sample and applies robust optimization tools to find linear decision rules that are asymptotically optimal under mild conditions. This framework was generalized in
\cite{bertsimas2019predictions}, where machine learning methods are incorporated to find weights that produce more accurate approximations of the objective. However, these dynamic methods are affected by the curse of dimensionality; they require scenario tree enumeration and can require many hours for solving problems with only a few stages.

In this paper, we propose a  non-parametric, data-driven and tractable approach to solving multistage stochastic optimization problems. By restricting the decision variables to be in a reproducing kernel Hilbert space (RKHS) generated by  a universal kernel, we can approximate a large class of functions using non-parametric functional representations. We incorporate sparsification techniques based on function subspace projections that allow our proposed algorithm  to overcome the complexity growth that kernel methods introduce when directly applying the Representer Theorem to large data sets. The input to our algorithm is historical data and we make no assumptions on the correlation structure of the uncertainties across stages. We perform computational experiments on real-world   multistage stochastic problems, and show how our method not only produces near optimal solutions but also remains tractable in higher dimensions and with large data sizes. 
\subsection{Related Literature}
Kernel methods have been used in recent work to solve stochastic multistage optimization problems with side information. \cite{Hanasusanto2013}, for example, approximate the objective using kernel regression, and \cite{pflug2016empirical} apply a kernel density estimator to the historical data to develop a non-parametric predict-then-optimize approach that comes with asymptotic optimality guarantees under strong conditions. However, these are local methods in which the predictions are made based only on those data points that are similar to the current observation. As noted in \cite{Bertsimas2020b}, such approaches require more data and perform worse on high dimensions compared to global methods, which instead optimize over functional variables that make the predictions. 

The Machine Learning community has long applied kernel methods to solve online learning problems \citep{wheeden2015measure, norkin2009stochastic}, but they have focused purely on predictive and not on prescriptive tasks. More recently, \cite{Bertsimas2020b} has aimed to extend kernel methods to data-driven, single-period optimization problems with auxiliary information by using the Representer Theorem to transform the optimization over functions into an optimization over parameters. They show that this approach overcomes the curse of dimensionality; however, its main disadvantage is that the number of parameters per decision grows linearly with the number of observations, resulting in function representations that are as complex as the size of the data and that become potentially intractable especially in multistage settings.

Works on stochastic optimization in a RKHS have developed multiple heuristics to reduce the number of parameters in the function representation. For instance, \cite{zhang2013online} uses random dropping, \cite{kivinen2004online} introduces forgetting factors, and \cite{honeine2011online} as well as \cite{engel2004kernel}  apply compressive sensing techniques. These approaches sucessfully achieve sparser functional representations, but they usually produce suboptimal approximations \citep{honeine2011online, engel2004kernel}. 

We instead follow the approach from \cite{koppel2016parsimonious} of applying Functional Stochastic Gradient Descent (FSGD) and projecting the iterates onto sparse subspaces that are found by removing parameters associated with data points that do not contribute much to the value of the decisions \citep{pati1993orthogonal}. This approach maintains optimality while addressing the  complexity growth that kernel methods exhibit as the data size increases. Intuitively, since stochastic gradient descent iterates are a noisy signal for the optimal solution, by projecting the iterates to have small model order we can ignore some of the noise while preserving the goal signal. 
The sparse subspaces of the RKHS onto which projections are made can be effectively found using kernel orthogonal matching pursuit \citep{vincent2002kernel}, an algorithm which given a function $f$ and an error bound $\epsilon$, generates a sparse
approximation of $f$ that is in a neighborhood of $f$ of radius $\epsilon$ in Hilbert norm. \cite{koppel2016parsimonious} show that for a specific choice of $\epsilon$ and of step-size for the FSGD algorithm, the projected FSGD iterates produce decisions that converge in mean to the optimal solution. 
\subsection{Contributions}
In this paper, we propose a novel data-driven approach for solving multistage stochastic optimization problems with side information using kernels. Specifically, we represent the controls as elements of a reproducing kernel Hilbert space and use loss-minimizing machine learning methods to predict them. In addition, we incorporate sparsification techniques to reduce the total number of parameters per control. We prove that this approach is asymptotically optimal, guaranteeing near optimal approximations for problems with large amounts of data. We also show that our approach remains computationally tractable in high dimensions and with large data sizes. In detail, our contributions are as follows.
\begin{enumerate}
    \item We propose a novel data-driven approach for multistage stochastic optimization problems with side information based on reproducing kernel Hilbert spaces. The approach takes as input historical data and minimizes the regularized empirical loss by applying functional stochastic gradient descent to optimize the decision rules, i.e.,  the functions which specify what decision to make in each stage. To the best of our knowledge, this is the first tractable application of reproducing kernel Hilbert spaces to multistage optimization problems with large data sizes. {While a kernel based formulation of the multistage stochastic optimization problem is briefly suggested (without any computational experiments) in \cite{Bertsimas2020b}, their non-stochastic and non-sparse approach is not tractable for large data sizes since both time and memory requirements increase cubically with the amount of data.}\\
    \item We extend sparsification techniques used by \cite{koppel2016parsimonious} to multistage optimization settings in order to reduce both, space and time complexities of our algorithm. Specifically, we use Functional Stochastic Gradient Descent (FSGD) to minimize the objective and project each iterate onto a sparse subspace that is found by removing parameters corresponding to data points with small contributions. We show that applying FSGD without any sparsification results in methods that do not scale to larger number of periods or data sizes. If sparsity is not added, the computational cost and the storage requirement increase quadratically with the data size. With the proposed method, however, both space and time complexities present linear growth with a constant factor that depends on the step size of the FSGD algorithm.\\
    \item We prove that if the loss function is convex, Lipschitz and differentiable almost everywhere, then the expected loss achieved with our algorithm converges in probability to the expected loss of the optimal decision rules in the space of continuous functions. \\ 
    \item We demonstrate across several instances of   inventory management problems that the proposed method finds near-optimal solutions using only a few parameters and with very low computational times. We show that increasing the number of periods, the dimension of the data, the dimension of the controls or the data size does not affect the tractability of our approach. 
\end{enumerate}
The paper is organized as follows: Section \ref{sec:prob_setting} introduces the exact framework for the problem being solved, Section \ref{sec:opt} contains the data-driven formulation of the multistage stochastic optimization problem with side information, Section \ref{sec:algorithm} presents the proposed algorithm, Section \ref{sec:convergence} states the convergence theorems, Section \ref{sec:complexity} analyses the complexity of the proposed method, and Section \ref{sec:experiments} shows the results for several computational experiments.

\section{Problem Setting} \label{sec:prob_setting}
We consider a discrete-time, convex, multistage stochastic problem over a finite horizon $T$. Initially, we observe some auxiliary covariates $\bx \in \mathcal{X}\subseteq \mathbb{R}^{q_0}$. Then, random disturbances $\bw_t$ that belong to a known set $\bbw_t\subseteq \mathbb{R}^{q_t}$ are sequentially observed over time. At every stage $t$, after observing the covariates $\bx$ and the previous disturbances $ (\bw_1,\ldots, \bw_{t-1})$, a decision $\bu_t\in \mathbb{R}^{r_t}$ is made. The total cost for the observed sequence of covariates, disturbances and decisions is $c(\bu_1,\ldots, \bu_T, \bx, \bw_1,\ldots, \bw_{T})$. 

A standard decision rule $\bar{\bu}(\cdot) = (\bar{\bu}_1(\cdot),\ldots, \bar{\bu}_T(\cdot))$ consists of functions $\bar{\bu}_t :  \bbw_1\times\ldots\times \bbw_{t-1} \rightarrow{\mathbb{R}^{r_t}}$ that at each stage $t$ take as input the disturbances up to that point and output a decision for the given stage. Specifically, denoting $\bw \coloneqq (\bw_1,\ldots, \bw_T)$ and $\bw_{1:t} \coloneqq (\bw_1,\ldots,\bw_{t})$, we have that the standard decision rule $\bar{\bu}(\cdot)$ applied to $\bw$ outputs $
\bar{\bu}(\bw) = \big(\bar{\bu}_1,\bar{\bu}_2(\bw_{1:1}),\ldots, \bar{\bu}_T(\bw_{1:T-1})\big)$. The multistage optimization problem over the space of continuous decision rules $\hat{\mathcal{F}}$ conditioned on some observed covariates $\bx_0$ can then be written as
\begin{equation}\label{main}
\begin{array}{rl}
\displaystyle\min_{\bar{\bu} \in \hat{\mathcal{F}}} \quad \:  & \displaystyle \Exp_{\bw | \bx} \left[ c(\bar{\bu}(\bw),\bx, \bw)\; |\; \bx = \bx_0 \right],
\end{array}
\end{equation}
where $c(\cdot)$ is a convex loss function. As noted in \cite{Bertsimas2020b}, the conditional problem in Eq. \eqref{main} can be formulated as an unconditional optimization problem by augmenting the domain of the decision rules to also take the covariates as input, and then evaluating the observed covariates in the decision rules found. In this paper, we adopt the same approach and therefore we consider augmented decision rules $\bu(\cdot) = \big(\bu_1(\cdot),\ldots, \bu_T(\cdot)\big)$ with augmented domains $\bu_t : \mathcal{X}\times \bbw_1\times\ldots\times \bbw_{t-1} \rightarrow{\mathbb{R}^{r_t}}$. The augmented decision rule applied to the data point $\bw$ with covariates $\bx$ outputs
\[
\bu(\bx,\bw) = \big(\bu_1(\bx),\bu_2(\bx,\bw_{1:1}),\ldots , \bu_T(\bx, \bw_{1:T-1})\big).
\]
From now on we will join the covariates and the disturbances into a single random variable $\bz\coloneqq (\bx, \bw)$ to simplify notation, and we index $\bz$ starting at time $0$ instead of time $1$, so that $\bz_{0:t}\coloneqq(\bx, \bw_1,\ldots,\bw_{t})$. Defining $\mathcal{F}$ as the space of continuous augmented decision rules, and $\mathcal{Z}\coloneqq \mathcal{X}\times\bbw_1\times\ldots\times \bbw_T$, we obtain that solving Eq. \eqref{main} is equivalent to solving the problem
\begin{equation}\label{main_unconditional}
\begin{array}{rl}
\displaystyle\min_{\bu \in \mathcal{F}} \quad \:  & \displaystyle \Exp_{\bz} \big[ c\big(\bu(\bz), \bz\big) \big]  
\end{array}
\end{equation}
and evaluating the optimal solution $\bu^*(\cdot)$ at $\bx = \bx_0$ to obtain the standard decision rule $\bar{\bu}^*(\bw) = \bu^*(\bx_0, \bw)$.

\section{Reproducing Kernel Hilbert space formulation for Multistage Optimization} \label{sec:opt}
We now propose a data-driven approach for multistage stochastic optimization problems with side information based on a Reproducing Kernel Hilbert space (RKHS). We include an onverview of these spaces in Appendix \ref{sec:rkhs}. We will assume that we have historical observations  $\mathcal{S} \!=\! \{\bz^n\}_{n = 1}^N\! = \{(\bx^n\!,\! \bw_1^n,\ldots, \bw_T^n)\}_{n = 1}^N$ that are independently and identically distributed according to some unknown distribution. Let $K_t :\mathcal{X}\times \bbw_1\times\ldots\times \bbw_{t-1} \rightarrow \mathbb{R}$ be a positive universal kernel and $\mathcal{H}_t$ the reproducing Kernel Hilbert space generated by $K_t$. We consider the Cartesian product Hilbert space, $
\mathcal{H} \coloneqq \mathcal{H}_1^{r_1}\times \ldots\times \mathcal{H}_T^{r_t}$
with inner product defined by
\begin{align*}
&\Big\langle \!\!\big((u_{1,1},\mathinner{{\ldotp}{\ldotp}{\ldotp}}, u_{1,{r_1}}),\mathinner{{\ldotp}{\ldotp}{\ldotp}}, (u_{T,1},\mathinner{{\ldotp}{\ldotp}{\ldotp}}, u_{T, {r_T}})\big),\big((v_{1,1},\mathinner{{\ldotp}{\ldotp}{\ldotp}}, v_{1,{r_1}}),\mathinner{{\ldotp}{\ldotp}{\ldotp}}, (v_{T,1},\mathinner{{\ldotp}{\ldotp}{\ldotp}}, {v}_{T, {r_T}})\big)\!\! \Big\rangle_{\mathcal{H}} \\
&\coloneqq \sum_{t = 1}^T\sum_{i = 1}^{r_t} \langle u_{t, i}, v_{t, i}\rangle_{\mathcal{H}_t},
\end{align*}
where $\langle u, v\rangle_{\mathcal{H}_t}$ corresponds to the inner-product between $u$ and $v$ with respect to the Hilbert space $\mathcal{H}_t$. We can approximate the solution of problem \eqref{main_unconditional} by applying its empirical regularized version and restricting the decision rules to be in $\mathcal{H}$:
\begin{equation}\label{directemp}
\min_{\bu \in \mathcal{H}}\: \frac{1}{N} \sum_{n=1}^N c(\bu(\bz^n), \bz^n)  + \frac{\lambda}{2} \|\bu\|_{\mathcal{H}}^2.
\end{equation}
Even though problem \eqref{directemp} is not equivalent to problem \eqref{main_unconditional}, if $\lambda$ vanishes with the data size then the regularized empirical average becomes a closer estimate of the expectation as $N$ increases. We will then focus on solving problem \eqref{directemp}, and later
in Corollary \ref{corollary}  
we show that as the data size goes to infinity, the expected loss converges in probability to the optimal solution of problem \eqref{main_unconditional}. 

One way to solve the regularized empirical problem \eqref{directemp} is to use the multidimensional version of the Representer Theorem \citep{wahba1990spline, soentpiet1999advances, scholkopf2002learning,  shawe2004kernel}, which says that for each $t=1,\ldots, T$ there exists a scalar matrix $\bba_t$ such that the optimal solution to \eqref{directemp} satisfies 
\[\bu_t(\cdot) = \bba_t \boldsymbol{K}_t(\bbz_t, \cdot) ,\]
where $\boldsymbol{K}_t(\bbz, \cdot) \coloneqq [K_t(\bz^1, \cdot),\ldots, K_t(\bz^N, \cdot)]^T$, and the time subscript for a data matrix $\bbd = [\bd^1, \ldots, \bd^N]$ refers to $\bbd_t = [\bd^1_{0:{t-1}}, \ldots, \bd^N_{0:{t-1}}]$. However, with this approach each decision $u_{t, i}$ has as many scalar parameters as data points, which generates both memory and performance problems as the number of data points becomes large. We instead want an algorithm for which more data yields better results overall, without increasing its complexity or worsening performance. General sparisification techniques like those found in \cite{kivinen2004online}, \cite{zhang2013online} or \cite{engel2004kernel}, successfully reduce the number of parameters; however they do so at the cost of compromising  optimality. We therefore take the pruning approach developed in \cite{koppel2016parsimonious} to solve problem $\eqref{directemp}$; we apply functional gradient descent to minimize the objective and at each iteration we drop those parameters that add near zero contribution to the value of the decisions, ensuring convergence to an optimal solution.
\section{Sparse Multistage Optimization with Kernels}\label{sec:algorithm}
In this section, we extend sparsification techniques used by \cite{koppel2016parsimonious} to the multistage optimization setting described in the previous section in order to reduce both, space and time complexities of our algorithm. Specifically, we describe an iterative algorithm for solving \eqref{directemp} using Functional Stochastic Gradient Descent and sparse projections. In order to ease notation, we first make the following definitions for an augmented decision rule $\bu$:
\begin{align}
E(\bu) & \coloneqq \mathbb{E}_{\bz}\left[c(\bu(\bz), \bz)\right], \label{expected_loss}\\
E^{\lambda}(\bu)& \coloneqq E(\bu)  + \frac{\lambda}{2} \|\bu\|_{\mathcal{H}}^2, \label{exp_reg_loss}\\
E^{\lambda}_\mathcal{S}(\bu)&  \coloneqq \frac{1}{N}\sum_{n=1}^N c\big(\bu(\bz^n), \bz^n\big)  + \frac{\lambda}{2} \|\bu\|_{\mathcal{H}}^2, \label{emp_reg_loss}\\
E^{\lambda}_n(\bu)&  \coloneqq  c\big(\bu(\bz^n),\bz^n\big)  + \frac{\lambda}{2} \|\bu\|_{\mathcal{H}}^2 \label{stoc_reg_loss}.
\end{align}
The algorithm relies on the fact that the expectation of $E^{\lambda}_n(\bu)$ over data yields $E^{\lambda}(\bu)$ to make stochastic gradient updates that converge to the optimal solution, while at the same time removing unnecessary parameters along the descent trajectory. 
\subsection{Functional Stochastic Gradient Descent {(FSGD)}}
Thanks to the fact that a RKHS preserves distance and to the continuity properties of real spaces, a derivative with respect to an element $f$ of a RKHS (a function) can be well defined and it satisfies the standard properties of derivatives of real functions. Following \cite{kivinen2004online}, we can then derive a generalization of the Stochastic Gradient Descent algorithm for elements of $\mathcal{H}$. This method is referenced as \textit{functional stochastic gradient descent}.

We compute the gradient of  $E^{\lambda}_n(\bu)$ with respect to the functions $\bu$ using the identity $u_{t,i}(\bz_{0:t-1}) = \langle K(\bz_{0:t-1}, \cdot), u_{t,i} \rangle_{\mathcal{H}}$, which is known as the \textit{reproducing property} of kernels. Differentiating on both sides of this equation we obtain
\begin{align}
\label{diff_kernel}
    \frac{\partial u_{t,i}(\bz_{0:t-1})}{\partial u_{t,i} } = \frac{\partial \big\langle u_{t,i}, K_t(\bz_{0:t-1}, \cdot ) \big\rangle}{\partial u_{t,i}} =  K_t(\bz_{0:t-1}, \cdot), \:\: \forall \: \: i\in [r_t], \: \: t\in [T],
\end{align}
where $[K]=\{1,\ldots, K\}$. The stochastic functional gradient can then be computed using the chain rule:
\begin{align}
\nabla_{\bu_t}c\big(\bu(\bz^n), \bz^n\big) &= \nabla_{_{\scaleto{\bu_t(\bz_{\scaleto{0:t-1}{4pt}})}{6pt}}} c(\bu(\bz^n), \bz^n)\:  K_t(\bz^n_{0:t-1}, \cdot),\label{grad_computation}\\
\implies \nabla_{\bu_t}E^{\lambda}_n(\bu) &=
\nabla_{_{\scaleto{\bu_t(\bz_{\scaleto{0:t-1}{4pt}})}{6pt}}}c(\bu(\bz^n), \bz^n)\:  K_t(\bz^n_{0:t-1}, \cdot) + \lambda\bu_t,
\end{align}
where $\nabla_{_{\bu_t(\bz_{0:t-1})}}c\big(\bu(\bz^n), \bz^n\big)$ corresponds to the derivative of $c\big(\bu(\bz), \bz\big)$ with respect to its scalar arguments $u^1_t(\bz_{0:t-1}),\ldots, u^{r_t}_t(\bz_{0:t-1})$ evaluated at $\bz^n$:
\[ \nabla_{_{\scaleto{\bu_t(\bz_{\scaleto{0:t-1}{4pt}})}{6pt}}} c(\bu(\bz^n), \bz^n) = \left[\frac{\partial c(\bu(\bz^n), \bz^n)}{\partial u^1_t(\bz_{0:t-1})}, \ldots, \frac{\partial c(\bu(\bz^n), \bz^n)}{\partial u^{r_t}_t(\bz_{0:t-1})}\right].
\]
Thus, the update rule for the standard functional stochastic gradient descent (FSGD) algorithm becomes
\begin{align}
    \bu^{n+1}_t &= \!\bu_t^n - \eta_n \nabla_{\bu_t} E^\lambda_n(\bu^n)  \nonumber\\
    &= (1\!-\!\eta_n\lambda)\bu^{n}_t - \eta_n \nabla_{_{\scaleto{\bu_t(\bz_{\scaleto{0:t-1}{4pt}})}{6pt}}}c(\bu(\bz^n), \bz^n)\:  K_t(\bz^n_{0:t-1}, \cdot),\label{update_rule}
\end{align}
where $\eta_n$ is the step-size of the algorithm and the sequence of controllers is initialized at some fixed function $\bu_0\in \mathcal{H}$. 

Using the update rule in Eq. \eqref{update_rule}, we can easily show by induction on $n$ that if the initial decision is of the form $\bu_t^0(\cdot) = \bba^0_t\boldsymbol{K}_t(\bbd^0_t, \cdot)$ for some initial data matrix $\bbd^0$ and initial parameters $\bba_t^0$, then the solutions $\bu^n$ produced at every iteration also have this form. Specifically, for each $n>0$ and for all $t\in [T]$, there exist a scalar matrix $\bba^n_t$ and a data matrix $\bbd^n$ such that $
\bu^{n}_t(\cdot) = \bba^n_t \cdot \boldsymbol{K}_t(\bbd^n_t, \cdot)$. In fact, this parametrization allows us to rewrite the functional update rule in Eq. \eqref{update_rule} as a nonfunctional (scalar) update on the data matrix $\bbd^n$ and the parameters $\bba^n_1,\ldots, \bba^n_T$ as follows:
\[ \bbd^{n+1} = [\bbd^{n},\:\: \bz^n], \quad \bba^{n+1} =  \left[ (1 - \eta_n\lambda)\bba^{n},\: \: \eta_n \nabla_{\bu(\bz)}c\big(\bu^{n}(\bz^n), \bz^n\big) \right],\] 
where 
\[\bba^n\coloneqq \begin{bmatrix} \bba^n_1\\ \vdots\\ \bba^n_T\end{bmatrix},\quad \text{and}\quad  \nabla_{\bu(\bz)}c\big(\bu(\bz), \bz\big) \coloneqq \begin{bmatrix} \nabla_{\bu_1(\bz_0)}c\big(\bu(\bz), \bz\big)\\ \vdots \\ \nabla_{\bu_T(\bz_{0:T-1})}c\big(\bu(\bz), \bz\big) \end{bmatrix}. \] Notice that this update forces the data matrix to have one more column after every iteration, which brings us back to the same problem we had when applying the Representer Theorem. However, because this is an iterative algorithm, we will reduce the dimension of the data matrix $\bbd^n$ after every iteration by measuring the contribution of each individual observation $\bz^n$ and removing those observations that added almost no value to the decision.
\subsection{Proximal Projection}
We now describe how to reduce the number of observations in the data matrix $\bbd^n$ with the goal of reducing the dimension of the parameters $\bba^n$. We observed that the Representer Theorem as well as the FSGD algorithm generate decisions $u_{t, i}$ that belong to the subspace of $\mathcal{H}_t$ spanned by the functions $K_t(\bz^1_{0:t-1}, \cdot),\ldots, K_t(\bz^N_{0:t-1}, \cdot)$. What we want is to produce decisions that belong to a smaller subspace, one generated using fewer observations.

Suppose that  $\tilde{\bbd}^{n+1}$, and $\tilde{\bba}^{n+1}$ are the values resulting from the FSGD iterative rule in Eq. \eqref{update_rule}, i.e,
\[
 \tilde{\bbd}^{n+1} = [\bbd^{n},\: \: \bz^n] \quad \text{and} \quad \tilde{\bba}^{n+1} =   \left[ (1 - \eta_n\lambda)\bba^{n}, \:\: \eta_n \nabla_{\bu(\bz)}c(\bu^{n}(\bz^n), \bz^n) \right],
\]
which represent the decisions $\tilde{\bu}_t^{n+1}(\cdot) =\tilde{\bba}^{n+1}_t\boldsymbol{K}_t(\tilde{\bbd}^{n+1}_t, \cdot)$, and assume that we want to generate a decision that only uses observations from a smaller data matrix $\bbd^{n+1}$. We can approximate $\tilde{\bu}^{n+1}$ with a decision $\bu^{n+1}$ that only depends on observations in $\bbd^{n+1}$ by projecting each decision $\tilde{u}^{n+1}_{t, i}$ onto the subspace of $\mathcal{H}_t$ that is spanned by the functions $\boldsymbol{K}_t(\bbd^{n+1}_t, \cdot)$. If we denote this projection by  $\Pi_{\bbd^{n+1}}(\cdot)$ then we can define \begin{align}
\label{def_proj}
\bu^{n+1} \coloneqq \Pi_{\bbd^{n+1}}(\tilde{\bu}^{n+1}) = \Pi_{\bbd^{n+1}}\big((1-\eta_n\lambda)\bu^n - \eta_n\nabla_{\bu}c(\bu^n(\bz^n), \bz^n)\big).
\end{align} 
The projection operator can be computed by solving the least squares problem
\begin{align}
\label{LS}
\bba^{n+1} = \argmin_{\hat{\bba}^{n+1}} \: \: \sum_{t = 1}^T  \:\Big\|\tilde{\bba}^{n+1}_t\boldsymbol{K}_t(\tilde{\bbd}^{n+1}_t,\cdot ) - \hat{\bba}^{n+1}_t\boldsymbol{K}_t({\bbd}^{n+1}_t,\cdot )\Big\|^2_{\mathcal{H}^{r_t}_t},
\end{align}
which has a closed form solution given by
\begin{align}
    \bba^{n+1}_{t} = \left(\bbk_t[\bbd^{n+1}_t, \bbd^{n+1}_t]\right)^{-1} \bbk_t[\bbd^{n+1}_t, \tilde{\bbd}^{n+1}_t]\tilde{\bba}^{n+1}_t, \quad \text{for all } t \in [T].\label{LSsolution}
\end{align}
We then have a simple way to project the FSGD solution onto the Hilbert subspace generated by a smaller data matrix $\bbd^{n+1}$, but we are still left the question: how do we find the right data matrix $\bbd^{n+1}$? As in \cite{koppel2016parsimonious}, we use a method called destructive \textit{kernel orthogonal matching pursuit} (KOMP) with pre-fitting, which was developed in \cite{vincent2002kernel}. The KOMP algorithm takes as input a function $\tilde{\bu}\in\mathcal{H}$ (represented by its data matrix $\tilde{\bbd}$ as well as the corresponding parameters $\tilde{\bba}$), and a maximum error bound $\epsilon$. For each element $\bd$ in the data matrix $\tilde{\bbd}$, the algorithm computes the approximation function $\bu = \Pi_{\tilde{\bbd} \symbol{92} \{\bd\}}(\tilde{\bu})$ obtained by removing observation $\bd$ from $\tilde{\bbd}$. Next, the algorithm removes the observation that produced the lowest error, updates the current function accordingly and then repeats this procedure to remove the next element. The algorithm stops removing elements when the difference between the current function and the best approximation function is larger than $\epsilon$. The exact algorithm can be found in Algorithm $\ref{KOMP}$.

\begin{algorithm}[!htp]
\SetAlgoNoLine
\KwIn{Function $\tilde{\bu}$ represented by data matrix $\tilde{\bbd}$ with $\tilde{M}$ columns, parameters $\tilde{\bba}$, and $\epsilon>0$.}
 Initialize $\bbd = \tilde{\bbd}$, $M=\tilde{M}$, $\bba = \tilde{\bba}$, and $\bu = \tilde{\bu}$  \;
 \While{$\bbd$ is non-empty}{
 \For{$j = 1,\ldots, \tilde{M}$}{
  Find minimal approximation error with data matrix element $\bd^j$ removed:
  \begin{align*}
  \gamma^2_j &= \big\|{\bu} - \Pi_{\bbd \symbol{92} \{\bd^j\}}({\bu})\big\|^2_{\mathcal{H}} \\
  &=  \min_{\hat{\bba}} \:\sum_{t = 1}^T  \: \big\| {{\bba}}_t\cdot \bbk_t(\bbd_t, \cdot) -  \hat{\bba}_t \cdot \bbk_t(\bbd_t \symbol{92} \{\bd^j_t\}, \cdot)  \big\|^2_{\mathcal{H}_t}.
  \end{align*}}
  Find the index with minimum approximation error: $j^* = \argmin \gamma_j$ \\
  \eIf{ $\gamma_{j^*} >\epsilon$}{
   \textnormal{\textbf{stop}}\;
   }{
   Prune data matrix: $\bbd = \bbd \symbol{92} \{\bd^{j^*}\}$\;
   Update $M = M-1$\;
   Update the parameters:
    $\bba = \argmin_{\hat{\bba}}\: \:\sum_{t = 1}^T  \: \big\| {{\bba}}_t\cdot \bbk_t(\bbd_t, \cdot) -  \hat{\bba}_t \cdot \bbk_t(\bbd_t \symbol{92} \{\bd^j_t\}, \cdot)  \big\|^2_{\mathcal{H}_t}$.}
 }
 \KwOut{$\bbd$, $\bba$, $\bu$.}
 \caption{{Kernel Orthogonal Matching Pursuit (KOMP)} }
\label{KOMP}
\end{algorithm}
\subsection{The Algorithm}
By combining Functional Stochastic Gradient Descent with the Kernel Orthogonal Matching Pursuit we are able to develop an algorithm that approximates the minimizer of $\E^{\lambda}(\bu)$ with decision rules that are represented using only a few parameters. The algorithm is initialized with a decision rule $\bu^0_t = \bba^0 \boldsymbol{K}_t(\bbd^0, \cdot)$, which in practice is usually set to $0$. Then, in each iteration, it performs one FSGD step and then applies the KOMP algorithm in order to obtain an approximated decision with fewer observations. Notice that if we define the projected gradient $\tilde{\nabla}$ by
\begin{align}
\label{proj_grad}\tilde{\nabla}_{\bu}E^{\lambda}_n(\bu^n)  \coloneqq \frac{\bu^n - \Pi_{\bbd^{n+1}}[\bu^n - \eta_n\nabla_{\bu}E^{\lambda}_n(\bu^n)]}{\eta_n},
\end{align} then we can write the iterative updates of this procedure in the same form as the standard iterative updates of FSGD:
\begin{align}
\label{SMOK_step}\bu^{n+1} = \bu^{n} - \eta_n\tilde{\nabla}_{\bu}E^{\lambda}_{n}(\bu^{n}).
\end{align}
Since stochasticity does not guarantee a strict objective descent, the algorithm keeps track of the best decision rules observed and at the end it outputs the decision $\bu^*_\mathcal{S}$ with the lowest empirical error $E^{\lambda}_{\mathcal{S}}$ with respect to the data set $\mathcal{S}$. The exact formulation can be found in Algorithm \ref{SMOK}.\\

\begin{algorithm}[!htp]
\SetAlgoNoLine
\KwIn{ Data points $\mathcal{S} = \{\bz^n\}_{n = 1,\ldots, N}$, error bounds $\epsilon_n$, learning rate $\eta_n,$ and initial decision $\bu^0$ represented with data matrix $\bbd^0$ and parameters $\bba^0$.}
 \For{$n = 1,\ldots, N$}{
   \hspace{0.05 cm} Take FSGD step using the $n^{th}$ sample $\bz^n$ to obtain
  \[
    \tilde{\bbd}^{n+1} = [\bbd^{n}, \bz^n] \!\!\quad \text{and} \!\!\quad \tilde{\bba}^{n+1} \!= \!  \left[ (1 - \eta_n\lambda)\bba^{n}, \: \eta_n \nabla_{\bu(\bz)}c(\bu^{n}(\bz^n), \bz^n) \right].
\] Reduce the data matrix and number of parameters using 
\[\bbd^{n+1}, \bba^{n+1}, \bu^{n+1} = \textnormal{KOMP}(\tilde{\bbd}^{n+1}, \tilde{\bba}^{n+1}, \epsilon_n)\] 
  }
 \KwOut{$\bu^*_\mathcal{S} = \argmin_{\bu \in \{\bu^1,\ldots, \bu^N\}} \: E^{\lambda}_{\mathcal{S}}(\bu)$.}
 \caption{Sparse Multistage Optimization via Kernels [SMOK]}\label{SMOK}
\end{algorithm}

\section{Convergence Analysis}\label{sec:convergence}
In this section, we show that for a specific choice of step-size the objective value of the decision output by the algorithm converges to the objective value of the true minimizer.  We first present the three 
 main assumptions that we make on the problem settings in order to guarantee convergence of the algorithm:
\begin{assumption}
\label{assumption_kernel}
The data space $\mathcal{Z}$ is compact, the kernels $K_t$ are universal, and there exists a constant $\kappa$ such that 
\[
K_t(\bz_{1:t-1}, \bz_{1:t-1}) \leq \kappa, \quad \forall \: \bz \in \mathcal{Z},\quad \forall \: \: t\in [T].
\]
\end{assumption}
\begin{assumption}
\label{assumption_lip}
There exists a constant $C$ such that {for all $\boldsymbol{z}\in \mathcal{Z}$} the loss function satisfies \[
\big|c(\bu, \bz) -c(\bu', \bz)\big|  \leq C \| \bu - \bu'\|_2, \quad \forall \: \: \bu, \bu'\in \mathbb{R}^{r_1 + \hdots + r_T}.
\]
\end{assumption}
\begin{assumption}
\label{assumption_convex}
The loss function $c(\bu(\bz), \bz)$ is convex and differentiable with respect to the scalar arguments $\bu(\bz)$ for all $\bz\in \mathcal{Z}$.
\end{assumption}

Assumption $\ref{assumption_kernel}$ naturally holds for most data domains, and this is a necessary assumption to ensure that the Hilbert norm of the optimizer of $E^{\lambda}$ is bounded. Assumption \ref{assumption_lip} holds whenever the cost function $c$ as well as the constraint functions $g_q$ are Lipschitz. This assumption implies that the gradient of $c$ with respect to the scalars $\bu(\bz)$ is bounded as 
\begin{align}
\label{gradient bound}
\|\nabla_{\bu(\bz)} c(\bu(\bz), \bz)\|_2\leq C,
\end{align} which in turn allows us to upper bound the expected norm of the gradient $\Exp\left[\|\nabla_{\bu}E^\lambda_n(\bu)\|^2_{\mathcal{H}_t}\right]$. Assumption \ref{assumption_convex} is a standard condition for convergence of descent methods, and it can be relaxed to the case in which the loss function is almost everywhere differentiable by applying subgradients instead of gradients. 
\begin{theorem}\label{convergence-rates}
Let $\bu_{\mathcal{S}}^* \coloneqq \argmin_{\bu \in \{\bu^1, \hdots, \bu^N\}} E_{\mathcal{S}}^{\lambda}(\bu)$ be the decisions generated by Algorithm \ref{SMOK} when given the set $\mathcal{S} = \{\bz^n\}_{n = 1}^N$ as input, and let $\bu^{\lambda}$ be the true minimizer of $E^{\lambda}(\bu)$ over $\mathcal{H}$. If we use constant step-size $\eta$ 
and constant error bounds $\epsilon = P_2\eta^2$ for some constant $P_2 >0$, then under Assumptions 1-3,  we have that
\[
\Exp\left[E^{\lambda}(\bu_{\mathcal{S}}^*)  - E^{\lambda}(\bu^{\lambda})\right] \leq  \mathcal{O}\left(\frac{\eta}{\lambda}\right).
\]
\end{theorem}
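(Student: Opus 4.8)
The plan is to recognize Algorithm \ref{SMOK} as a biased, projected version of stochastic gradient descent on the $\lambda$-strongly convex functional $E^{\lambda}$, and to run the standard strongly-convex SGD argument inside the Hilbert space $\mathcal{H}$ while carefully tracking the extra error introduced by the KOMP projection. First I would record two structural facts. Since $c(\cdot,\bz)$ is convex (Assumption \ref{assumption_convex}) and $\tfrac{\lambda}{2}\|\cdot\|_{\mathcal{H}}^2$ is $\lambda$-strongly convex, $E^{\lambda}$ is $\lambda$-strongly convex with a unique minimizer $\bu^{\lambda}$; moreover $\|\bu^{\lambda}\|_{\mathcal{H}}$ is finite by Assumption \ref{assumption_kernel}. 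Second, the functional stochastic gradient $\nabla_{\bu}E^{\lambda}_n(\bu^n)$ is, conditionally on the past, an unbiased estimate of $\nabla E^{\lambda}(\bu^n)$ because $\bz^n$ is i.i.d.\ and independent of $\bu^n$. Writing the SMOK step \eqref{SMOK_step} as $\bu^{n+1} = \bu^n - \eta\big(\nabla_{\bu}E^{\lambda}_n(\bu^n) + b_n\big)$, the projected gradient \eqref{proj_grad} differs from the true stochastic gradient only through the KOMP approximation, so $\|b_n\|_{\mathcal{H}} = \|\tilde{\bu}^{n+1} - \bu^{n+1}\|_{\mathcal{H}}/\eta \le \epsilon/\eta$. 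The deliberate choice $\epsilon = P_2\eta^2$ makes $\|b_n\|_{\mathcal{H}} \le P_2\eta$: the projection bias is of the same order as one gradient step and can be absorbed into the $\mathcal{O}(\eta^2)$ noise terms.

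Next I would expand $\|\bu^{n+1} - \bu^{\lambda}\|_{\mathcal{H}}^2$, take conditional expectation, and combine unbiasedness with the strong-convexity inequality $\langle \nabla E^{\lambda}(\bu^n), \bu^n - \bu^{\lambda}\rangle \ge E^{\lambda}(\bu^n) - E^{\lambda}(\bu^{\lambda}) + \tfrac{\lambda}{2}\|\bu^n - \bu^{\lambda}\|_{\mathcal{H}}^2$. Setting $\delta_n \coloneqq \Exp[\|\bu^n - \bu^{\lambda}\|_{\mathcal{H}}^2]$ and $f_n \coloneqq \Exp[E^{\lambda}(\bu^n) - E^{\lambda}(\bu^{\lambda})] \ge 0$, this yields a recursion of the form
$$\delta_{n+1} \le (1 - \eta\lambda)\,\delta_n - 2\eta f_n + \mathcal{O}(\eta^2),$$
where the $\mathcal{O}(\eta^2)$ bundles the second moment of the gradient and the bias cross-term. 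That second moment is bounded via Assumptions \ref{assumption_kernel}--\ref{assumption_lip}: the loss-gradient factor is at most $C$ by \eqref{gradient bound}, the kernel factor at most $\kappa$, and $\lambda\bu^n_t$ is controlled once $\|\bu^n\|_{\mathcal{H}}$ is bounded. This recursion is used twice. Dropping the nonnegative $f_n$ and unrolling gives $\delta_n \le (1-\eta\lambda)^n\delta_0 + \mathcal{O}(\eta/\lambda)$, which confines the iterates to a ball of radius $\mathcal{O}(\sqrt{\eta/\lambda})$ around $\bu^{\lambda}$ and thereby self-consistently validates the uniform gradient bound. Keeping $f_n$ and telescoping over $n = 1,\ldots,N$ instead gives $\tfrac{1}{N}\sum_{n=1}^N f_n \le \tfrac{\delta_0}{2\eta N} + \mathcal{O}(\eta)$, which with $\delta_0 = \mathcal{O}(1/\lambda)$ (e.g.\ $\bu^0 = 0$) is $\mathcal{O}(\eta/\lambda)$.

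Finally I would transfer this average-iterate bound to the output $\bu^*_{\mathcal{S}}$, which is selected by the empirical criterion $E^{\lambda}_{\mathcal{S}}$ rather than by $E^{\lambda}$. I would decompose
$$E^{\lambda}(\bu^*_{\mathcal{S}}) - E^{\lambda}(\bu^{\lambda}) = \big(E^{\lambda}(\bu^*_{\mathcal{S}}) - E^{\lambda}_{\mathcal{S}}(\bu^*_{\mathcal{S}})\big) + \big(E^{\lambda}_{\mathcal{S}}(\bu^*_{\mathcal{S}}) - E^{\lambda}(\bu^{\lambda})\big),$$
bound the second group using $E^{\lambda}_{\mathcal{S}}(\bu^*_{\mathcal{S}}) \le \tfrac{1}{N}\sum_n E^{\lambda}_{\mathcal{S}}(\bu^n)$ together with $\Exp[E^{\lambda}_{\mathcal{S}}(\bu^{\lambda})] = E^{\lambda}(\bu^{\lambda})$ (valid since $\bu^{\lambda}$ is data-independent) and the telescoped average above, and control the first group by passing from $E^{\lambda}_{\mathcal{S}}$ to $E^{\lambda}$ on the data-dependent iterates. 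I expect this last transfer to be the main obstacle: for a sample $\bz^m$ with $m < n$ the iterate $\bu^n$ has already used $\bz^m$, so $E^{\lambda}_{\mathcal{S}}(\bu^n)$ is an optimistically biased estimate of $E^{\lambda}(\bu^n)$ and the $m \ge n$ terms alone are unbiased. The way through is to use compactness of $\mathcal{Z}$ and the Lipschitz bound (Assumptions \ref{assumption_kernel}--\ref{assumption_lip}) to show the loss is uniformly bounded on the $\mathcal{O}(\sqrt{\eta/\lambda})$ ball, and to bound the influence of each reused sample on $\bu^n$ by $\mathcal{O}(\eta)$ (it enters through a single gradient step), so that the accumulated empirical-to-population gap is $\mathcal{O}(\eta)$ and does not worsen the final $\mathcal{O}(\eta/\lambda)$ rate.
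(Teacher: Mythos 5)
Your core argument is essentially the paper's own. The paper likewise treats the KOMP step as a bias of Hilbert norm at most $\epsilon/\eta = P_2\eta$ (Lemma \ref{lemma4}), expands $\|\bu^{n+1}-\bu\|_{\mathcal{H}}^2$ around the projected stochastic gradient step (Lemma \ref{lemma3}), bounds the gradient second moment by $4\kappa^2C^2$ via Assumptions \ref{assumption_kernel}--\ref{assumption_lip} (Lemma \ref{lemma_grad_bound}), and telescopes with the comparator $\bu = \bu^{\lambda}$ (Lemma \ref{rate_conv}). The one structural difference in the middle of the argument is minor: you invoke $\lambda$-strong convexity of $E^{\lambda}$ to get the $(1-\eta\lambda)$ contraction and confine the iterates, whereas the paper uses only convexity of $E^{\lambda}_n$ and confines the iterates by an explicit induction showing $\|\bu^n\|_{\mathcal{H}}\le \kappa C/\lambda$ (Lemma \ref{Hnorm}). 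Both routes produce the same telescoped bound on the weighted average of $\Exp\bigl[E^{\lambda}(\bu^n)-E^{\lambda}(\bu^{\lambda})\bigr]$, with the same $\mathcal{O}(\eta^2/\lambda)$ per-step contribution from the projection bias.

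The genuine issue is your final step, the transfer from the average iterate to the output $\bu^*_{\mathcal{S}}$. You are right that this is the delicate point: $\bu^*_{\mathcal{S}}$ is selected by the \emph{empirical} criterion $E^{\lambda}_{\mathcal{S}}$, so the population-loss bound on the average iterate does not immediately apply to it. But the stability argument you sketch to close this does not go through under the stated assumptions. Assumption \ref{assumption_convex} gives convexity and differentiability, not Lipschitz continuity of the gradient; without smoothness, a gradient step need not be non-expansive (for a tightly smoothed version of $c(u)=C|u|$, two starting points straddling the kink at distance $2\delta$ end up roughly $2\eta C$ apart after one step, no matter how small $\delta$ is). Consequently, replacing one past sample $\bz^m$ can only be shown to perturb a later iterate $\bu^n$ by $\mathcal{O}\bigl(\eta\kappa C\min(n-m,\,1/(\eta\lambda))\bigr)=\mathcal{O}(\kappa C/\lambda)$, using the $(1-\eta\lambda)$ contraction of the regularized update — not by $\mathcal{O}(\eta)$ as you claim. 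Summing over the reused samples then gives an empirical-to-population gap of order $1/\lambda$, which does not vanish and would swamp the target rate $\mathcal{O}(\eta/\lambda)$. You should know that the paper does not resolve this either: in the proof of Lemma \ref{rate_conv}, inequality \eqref{min_bound} is asserted ``by definition of $\bu^*_{\mathcal{S}}$,'' which is legitimate only if the output were $\argmin_n E^{\lambda}(\bu^n)$, i.e., selected by the population loss; for the empirical selector it is precisely the step you flagged. So your attempt is more candid about where the difficulty lies, but the repair you propose fails as stated; an actual fix would require an additional smoothness assumption (to obtain uniform-stability bounds of the Hardt--Recht--Singer type), a uniform concentration argument over the iterate class, or a modification of the algorithm's output (e.g., returning the average iterate, or selecting on held-out data).
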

\begin{proof}
See Appendix \ref{appendix:theorems}. \hfill
\end{proof}
\begin{corollary}\label{corollary}
Let $\bu^*$ be the true minimizer of $E(\cdot)$ over $\mathcal{F}$. If we use constant step-size with $\eta = \frac{P_1}{\sqrt{N}}<\frac{1}{\lambda}$, and $P_1 >0$, constant error bounds $\epsilon = P_2\eta^2$ for some constant $P_2 >0$, and regularization parameter $\lambda$ such that $\lambda \xrightarrow[N\to\infty]{} 0$ and $\lambda \sqrt{N} \xrightarrow[N\to \infty]{}\infty$, then under Assumptions 1-3  we have that
\begin{align}
\lim_{N\rightarrow\infty} \mathbb{E}[|E(\bu^*_\mathcal{S}) - E(\bu^*)|] = 0.
\end{align}
\end{corollary}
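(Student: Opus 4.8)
The plan is to sandwich a regularized quantity between $E(\bu^*_\mathcal{S})$ and $E(\bu^*)$ and split the resulting gap into an optimization error (already controlled by Theorem \ref{convergence-rates}), a regularization penalty, and an approximation error coming from the regularization bias. First I would remove the absolute value: every iterate $\bu^n$ of Algorithm \ref{SMOK} is a finite combination of continuous kernel sections, so $\bu^*_\mathcal{S}\in\mathcal{H}\subseteq\mathcal{F}$ and therefore $E(\bu^*_\mathcal{S})\geq E(\bu^*)$ for every realization of the sample $\mathcal{S}$. Hence $|E(\bu^*_\mathcal{S})-E(\bu^*)| = E(\bu^*_\mathcal{S})-E(\bu^*)$, and it suffices to bound this nonnegative quantity from above in expectation.

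Next I would write the decomposition
\[
E(\bu^*_\mathcal{S})-E(\bu^*) = \underbrace{\left(E(\bu^*_\mathcal{S})-E^\lambda(\bu^*_\mathcal{S})\right)}_{(a)} + \underbrace{\left(E^\lambda(\bu^*_\mathcal{S})-E^\lambda(\bu^\lambda)\right)}_{(b)} + \underbrace{\left(E^\lambda(\bu^\lambda)-E(\bu^*)\right)}_{(c)}.
\]
Term $(a)$ equals $-\tfrac{\lambda}{2}\|\bu^*_\mathcal{S}\|^2_\mathcal{H}\leq 0$, so it only helps and may be dropped from the upper bound. Term $(b)$ is exactly the quantity bounded by Theorem \ref{convergence-rates}: under the stated constant step-size and error bounds, $\Exp[(b)]\leq\mathcal{O}(\eta/\lambda)$, and substituting $\eta=P_1/\sqrt{N}$ gives $\Exp[(b)]=\mathcal{O}\!\big(P_1/(\lambda\sqrt{N})\big)$, which vanishes precisely because the hypothesis imposes $\lambda\sqrt{N}\to\infty$.

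The heart of the argument is term $(c)$, the regularization bias, and this is where I expect the real work to lie. Since $\bu^\lambda$ minimizes $E^\lambda$ over $\mathcal{H}$, for any $\bu\in\mathcal{H}$ we have $E^\lambda(\bu^\lambda)\leq E(\bu)+\tfrac{\lambda}{2}\|\bu\|^2_\mathcal{H}$, hence $(c)\leq\big(E(\bu)-E(\bu^*)\big)+\tfrac{\lambda}{2}\|\bu\|^2_\mathcal{H}$. I would then invoke the universality of the kernels $K_t$ from Assumption \ref{assumption_kernel}: because $\mathcal{Z}$ is compact, each $\mathcal{H}_t$ is dense in the continuous functions on its domain under the sup-norm, so for every $\delta>0$ there is a $\bu_\delta\in\mathcal{H}$ with $\sup_{\bz\in\mathcal{Z}}\|\bu_\delta(\bz)-\bu^*(\bz)\|_2<\delta$ while automatically respecting the stagewise dependence of each component on $\bz_{0:t-1}$. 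The Lipschitz bound of Assumption \ref{assumption_lip} then gives $E(\bu_\delta)-E(\bu^*)\leq C\delta$, so $(c)\leq C\delta+\tfrac{\lambda}{2}\|\bu_\delta\|^2_\mathcal{H}$. Holding $\delta$ (and hence the fixed quantity $\|\bu_\delta\|^2_\mathcal{H}$) constant and letting $\lambda\to 0$ kills the penalty, yielding $\limsup_{\lambda\to 0}(c)\leq C\delta$; since $\delta$ is arbitrary and $(c)\geq 0$ (because $\bu^\lambda\in\mathcal{F}$ forces $E^\lambda(\bu^\lambda)\geq E(\bu^\lambda)\geq E(\bu^*)$), I conclude $(c)\to 0$ as $\lambda\to 0$, i.e. as $N\to\infty$.

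Combining the three pieces,
\[
0\leq \Exp\!\left[E(\bu^*_\mathcal{S})-E(\bu^*)\right]=\Exp[(a)]+\Exp[(b)]+(c)\leq \Exp[(b)]+(c)\xrightarrow[N\to\infty]{}0,
\]
which is the claim. The main obstacle is the tension inside term $(c)$ between the approximation accuracy $\delta$ and the Hilbert norm $\|\bu_\delta\|^2_\mathcal{H}$, which typically diverges as $\delta\to 0$; the delicate point is that the limits must be taken in the correct order (fix $\delta$, then send $\lambda\to 0$) so that the product $\lambda\|\bu_\delta\|^2_\mathcal{H}$ still vanishes, and that this decay of $\lambda$ remains compatible with the opposing requirement $\lambda\sqrt{N}\to\infty$ that is needed to drive the optimization error in $(b)$ to zero.
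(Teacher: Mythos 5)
Your proof is correct, and its skeleton coincides with the paper's: drop the absolute value via $\bu^*_\mathcal{S}\in\mathcal{H}\subseteq\mathcal{F}$, peel off the (nonpositive) regularization term, control the optimization error $E^\lambda(\bu^*_\mathcal{S})-E^\lambda(\bu^\lambda)$ with Theorem \ref{convergence-rates} and the hypothesis $\lambda\sqrt{N}\to\infty$, and kill the remaining bias using universality and $\lambda\to 0$. Where you genuinely diverge is in the bias term. The paper introduces an intermediate point $\bu^*_{\mathcal{H}}$, ``the true minimizer of $E$ over $\mathcal{H}$,'' splits your term $(c)$ into $\bigl(E^\lambda(\bu^\lambda)-E(\bu^*_{\mathcal{H}})\bigr)+\bigl(E(\bu^*_{\mathcal{H}})-E(\bu^*)\bigr)$, and asserts that the first piece vanishes with $\lambda$ and the second is exactly zero by universality. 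That argument tacitly requires the infimum of $E$ over $\mathcal{H}$ to be attained by an element of finite Hilbert norm, which universality alone does not guarantee (density in sup-norm gives $\inf_{\mathcal{H}}E=\inf_{\mathcal{F}}E$ but not attainment, and near-minimizers typically have $\|\cdot\|_{\mathcal{H}}$ blowing up). Your $\delta$-approximation argument — pick $\bu_\delta$ with sup-norm error $\delta$, bound $(c)\leq C\delta+\tfrac{\lambda}{2}\|\bu_\delta\|^2_{\mathcal{H}}$, send $\lambda\to 0$ with $\delta$ fixed, then let $\delta\to 0$ — avoids any existence assumption and makes the order-of-limits issue explicit, so it is the more rigorous route; the paper's version buys brevity at the cost of that unstated attainment hypothesis. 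One small remark: your use of Assumption \ref{assumption_lip} to get $E(\bu_\delta)-E(\bu^*)\leq C\delta$ from the pointwise bound $\|\bu_\delta(\bz)-\bu^*(\bz)\|_2<\delta$ is exactly the right glue between sup-norm density and objective values, and it is the same mechanism the paper needs (but does not spell out) to justify its ``fourth term is zero'' claim.
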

\begin{proof}
See Appendix \ref{appendix:theorems}. \hfill
\end{proof}
Since $L_1$ convergence implies convergence in probability, the corollary also implies that the expected loss achieved with Algorithm \ref{SMOK} converges in probability to the optimal solution.

In addition, from Theorem $\ref{convergence-rates}$ we observe that setting $\eta = \frac{P_1}{\sqrt{N}}$ makes the objective value of the solution found by Algorithm \ref{SMOK} converge to the optimal solution of problem \eqref{directemp} with a rate of convergence of $\mathcal{O}\left(\frac{1}{\lambda \sqrt{N}}\right)$. Convergence can also be achieved under diminishing step size, although with a slower rate of  $\mathcal{O}\left(\frac{1}{\lambda \log{N}}\right)$. In practice, a diminishing step size or a very small constant step size might make our data matrix $\bbd^n$ grow arbitrarily large, since little or no pruning would be done at each iteration. A constant step size is then what allows us to control the trade-off
between accuracy and memory required; we want to use a step size $\eta$ that is small enough to make the error in Theorem \ref{convergence-rates} small, but large enough for the pruning to be done.
\section{Complexity Analysis}\label{sec:complexity}
Let $M_n$ be the size of the data matrix $\bbd^n$ during the $n^{th}$ iteration of Algorithm \ref{SMOK}. We analyze both space and time complexities per iteration in terms of $M_n$.\\

\noindent \textit{\textbf{Space:}} At each iteration we need to store the kernel matrix $\bbk_t[\bbd^n_t, \bbd^n_t]\in \mathbb{R}^{M_n\times M_n}$ and its inverse as well as the parameters $\bba_t^{n}\in \mathbb{R}^{r_t\times M_n}$ for each $t$. This results in $\mathcal{O}(TM_n^2 + M_n\sum_{t = 1}^T r_t)$ memory requirement.

\noindent \textit{\textbf{Time:}} For the FSGD step, computing the gradient takes $\mathcal{O}(M_{n}\sum_{t = 1}^T r_t)$ time. Computing from scratch the kernel matrices $\bbk_t[\bbd^n_t, \bbd^n_t]\in \mathbb{R}^{M_n\times M_n}$ and its inverses (needed for the pruning step) takes $\mathcal{O}(M_n^2\sum_{t = 0}^T q_t)$ and $\mathcal{O}(TM_n^3)$ time respectively. However, by using a recursive rule to compute these matrices in terms of the corresponding values in the previous iteration, the times become $\mathcal{O}(M_n\sum_{t = 0}^T q_t)$ and $\mathcal{O}(M_n^2)$ respectively. In addition, the matrix multiplication in Eq. \eqref{LSsolution} takes $O(M_n^2)$ time for each $t$. Since at most $M_{n}$ elements can be removed from the dictionary at the $n^{th}$ iteration, we obtain that in the worst case scenario the time per iteration becomes  $\mathcal{O}(TM_n^3 + M_n^2\sum_{t = 0}^T q_t)$.\\

Let us now discuss the size of $M_n$. In the
worst-case, we know that for all iterations the size of the data matrix is upper bounded by the covering number $M$ of the data domain \citep{ZHOU2002739}. More specifically, for fixed step size $\eta$ and fixed error bound $\epsilon = P_2\eta^2$, we have that if the data space $\mathcal{Z}$ is compact (Assumption \ref{assumption_kernel}), then $M_n$ is upper bounded by the minimum number of balls of radius $\frac{P_2\eta}{C}$ needed to cover the compact set $K_1(\mathcal{Z}_0, \cdot)\times \hdots\times K_T(\mathcal{Z}_{0:T-1}, \cdot)$ of kernel transformations (see for example the proof of Theorem 3 in \cite{koppel2016parsimonious}). While an exact expression for this cover number $M$  is unknown, the number is finite \citep{anthony2009neural} and it decreases as $\eta$ or $P_2$ increases. In particular, the maximum number of samples in the data matrix depends on the step size $\eta$ and the constant $P_2$, but not on the data size $N$. 

Denoting the cover number described above by $M$ and considering fixed values of $T$ and of the dimensions $r_1,\hdots, r_T$ and $q_0,\hdots, q_T$, we obtain that the worst case total time across the $N$ iterations of Algorithm \ref{SMOK} can be upper bounded by $\mathcal{O}(NM^3)$ and worst case total space required is $\mathcal{O}(NM^2)$. While the worst case scenario cannot happen for all iterations (for example, if $M$ elements are pruned in one iteration, the next iteration is very fast), this bound is enough to conclude that total time and total space are in the worst case linear in the number of iterations. Notice that if we removed the pruning step, the entire algorithm would require $\Omega(N^2)$ space to store the kernel matrix and $\Omega(N^2)$ time for computations, showing that Algorithm \ref{SMOK} indeed reduces the overall complexity as the number of iterations becomes much larger than $M$.
\section{Computational Experiments}\label{sec:experiments}
We perform computational experiments for the inventory control and the shipment planning problems to analyze the average out-of-sample performance as well as the tractability of the proposed algorithms. For both applications we compare the SMOK algorithm proposed in Algorithm \ref{SMOK}, to the MOK algorithm (Multistage Optimization with Kernels), which is the result of applying the FSGD algorithm without the pruning step. Moreover, we compare the SMOK and MOK algorithms against three other benchmarks:
\begin{enumerate}
\item \textbf{SRO:} Sample robust optimization approach from \cite{bertsimas2018data}, in which all samples are assigned equal weight $\frac{1}{N}$. We use uncertainty sets bounded by $\epsilon$ in the $\ell_1$ norm as well as multi-policy approximation with linear decision rules. 
    \item \textbf{SRO-knn:} Sample robust optimization with covariates approach developed in \cite{bertsimas2019predictions}, using uncertainty sets bounded by $\epsilon$ in the $\ell_1$ norm as well as multi-policy approximation with linear decision rules. The weights were obtained using the $k_N$-nearest neighbors approach.
    \item \textbf{SAA-knn:} Sample average approximation method, which is equivalent to the SRO-knn approach with ($\epsilon = 0$).
\end{enumerate}

{We analyze the computational results for several instances of the inventory control problem. First, we consider a high dimensional instance of the problem to show the tractability of the SMOK algorithm as well as to compare its performance against other methods.} Next, we analyze how the performance of the proposed algorithms varies with the dimensions of the problem (number of periods, data size, dimension of the data as well as dimension of the controllers). For instances in which the number of periods is less than 5 we are also able to compute lower bounds for the loss achieved by the optimal decision rules, which enables us to quantify the optimality gap of the proposed methods.

For the shipment planning application we reproduce the results from \cite{bertsimas2019predictions} to compare the SMOK and MOK algorithms against sample robust optimization (with and without covariates) and sample average approximation. For training all these benchmarks we use the same parameter values reported in \cite{bertsimas2019predictions}.\\

\noindent \textbf{Handling Constraints:} Often the sequence of decisions $\bu(\bz)$ must satisfy certain convex constraints for all possible disturbances, transforming the problem of interest into
\begin{equation}
\begin{array}{rl}
\displaystyle\min_{\bu \in \mathcal{F}} \quad \:  & \displaystyle \Exp_{\bz} \big[ c\big(\bu(\bz), \bz\big) \big]  \vspace{3pt} \\
\text{s.t.} \quad \: &  g_q\big(\bu(\bz)\big) \leq 0,  \quad \forall \: \bz \in \mathcal{Z}, \quad \forall \: q\in [Q].
\end{array}\label{constrained_version}
\end{equation}
We address this problem by relaxing the constraints into the objective with a penalty function. More specifically, in Algorithm \ref{SMOK} we replace the cost $ c(\bu(\bz), \bz)$ with a new loss function $c^\psi$ defined as
\begin{align}
c^\psi\big(\bu(\bz), \bz\big) &\coloneqq  c\big(\bu(\bz), \bz\big) + \psi\sum_{q = 1}^Q\max\Big(0, g_q\big(\bu(\bz)\big)\Big)^2, \label{augmented_loss}
\end{align}
where $\psi$ is the penalty parameter. Although feasibility is not guaranteed, the constraint violation is expected to vanish for large enough $\psi$ (see Lemma \ref{const-violation}). Convergence analysis for the SMOK algorithm applied to this constrained problem can be found in Appendix \ref{appendix:theorems}.\\

\noindent \textbf{Parameter Settings:} We train the SMOK and MOK algorithms using Gaussian kernels and constant step size. The values for $\lambda, \psi$ and $\theta$ were found using validation, and the decisions were projected onto the space of feasible decisions before making any evaluations, both at training and testing stages (this means that the decisions evaluated had $0$ constraint violation). For each instance of the problem the constant step size $\eta$ was initially set to $10^{-5}$ and it was repeatedly increased by factors of 5 so long as the average training loss did not worsen and the iterations were reaching convergence. The parameter $P_2$ for the error bound $\epsilon$ was initially set to $0.1$ and was repeatedly increased by factors of 2; we stopped increasing it when the average training loss significantly worsened. \\

\noindent \textbf{Software Utilized:}{ Experiments were implemented in Python 3 \citep{10.5555/1593511} using the NumPy library \citep{harris2020array}. We clarify that Eq. \eqref{LSsolution} can often be difficult to compute due to numerical instability in the calculations for the inverse matrix. To address this issue we add a small value $\lambda = 1e^{-7}$ to the diagonal of a matrix before computing its inverse. In terms of hardware, all experiments where run on an Intel(R) Core(TM) i7-8557U CPU @ 1.70GHz processor with 4 physical cores (hyper-threading enabled). The machine has a 32KB L1 cache and 256KB L2 cache per core, and an 8MB L3 cache. There is a total of 16GB DRAM.}
\subsection{Inventory Control Problem}
We consider a multistage inventory control problem with linear constraints.  At each stage $t$ with initial inventory $s_t$, a retailer places procurement orders $\bu_t\in \mathbb{R}^r$ at various suppliers, and later observes the demands $\bw_t\in \mathbb{R}^q$. At the end of each stage, the firm incurs a per-unit holding cost of $h_t$ and a back-order cost of $b_t$. The inventory is not backlogged, and therefore the initial inventory for the next period is given by the linear equation $s_{t} = s_{t-1} + \boldsymbol{1}^\top \bu_t - \boldsymbol{1}^\top \bw_t$, with zero initial inventory for the first period. In addition, the procurement orders are upper bounded by a constant $L$ and the sum of procurement orders for two consecutive stages cannot exceed a constant $\ell$. As in Ban et al. (2018), we consider the scenario in which retailers can observe auxiliary covariates $\bx$ that relate to the future demands (e.g. in the fashion industry color and brand are useful factors for predicting demand of the products). For a problem with $T$ periods, we can formulate this optimization problem as
\begin{align*}
    \min_{\bu_{1:T}} \quad & \mathbb{E}_{ \bw | \bx}\left[\sum_{t = 1}^T h_t\left[s_t\right]^+ + b_t \left[-s_t\right]^+ \bigg \vert \: \bx = \bx_0   \right]\\
    \text{s.t.}\quad  & s_t = s_{t-1} + \boldsymbol{1}^\top\bu_t - \boldsymbol{1}^\top\bw_t,& &\forall t\in [T],\\
    & \bu_t \geq \boldsymbol{0},& &\forall t\in [T],\\
    & \bu_t \leq L\boldsymbol{1},& &\forall t\in [T],\\
    & \bu_t + \bu_{t+1} \leq \ell\boldsymbol{1},& &\forall t\in [T-1].
\end{align*}
The parameters $h_t, b_t$ were chosen to be $2$ and $1$, respectively. The data sets used in these experiments were generated by sampling $\bx$ from a Truncated Gaussian Distribution with mean $2$ and standard deviation $0.5$, and with truncating bounds $0$ and $6$. The demands $\bw_t$ were then obtained as a linear function of the covariates with some added noise; specifically, $\bw_t = \alpha_t\bx + \boldsymbol{\epsilon}_t$, where $\epsilon_t$ was sampled from a standard distribution and the constants $\alpha_t$ were selected to be close to $50$.

{We first consider a large instance of the problem with $T = q = r = 10$, and we set the control bounds as $L = 150$ and $\ell = 200$. We use a training set with $2000$ sample paths and we approximate the expected loss achieved by each method by averaging the losses across a common testing set with $10^4$ sample paths. Since the SRO and SRO-knn methods become intractable for problems of this magnitude, in this experiment we only compare the SMOK and MOK methods to SAA-knn. We use validation to choose the best parameters for all methods and we evaluate the results on the testing set. In table \ref{table:large-instance} we observe that both SMOK and MOK outperform SAA-knn in terms of average out-of-sample loss and computational time. Moreover, the number of parameters needed for the SMOK algorithm is smaller by two orders of magnitude compared to the other methods. Even though we observe an increase in computation time for SMOK with respect to MOK (due to the overhead computation time for the pruning step), we also see that adding sparsity helped SMOK achieve a better average loss.
\begin{table}[!htp]
    \centering
    \begin{tabular}{|c|c|c|c|}
    \hline
          & \textbf{Avg OOS Loss} & \textbf{Total Time (hours)} &\textbf{No. of Params}\\
         \hline
         \hline
         \textbf{SMOK}& $491.30$ & $0.3$ & $1.5\times 10^3$\\
         \hline
         \textbf{MOK}& $493.74$ & $0.1$ & $ 2\times 10^5$\\
         \hline
         \textbf{SAA-knn} & $496.04$ & $14.36$ &  $ 2.2\times 10^5$\\
         \hline
    \end{tabular}
    \caption{Average out-of-sample (OOS) loss and total computation time for inventory problem with $T = q = r = 10$.}
    \label{table:large-instance}
\end{table}}

We next consider other instances of the inventory problem to analyze how the dimensions of the problem affect the overall performance of the SMOK and MOK algorithms. We compared these two methods to a third algorithm ADR (Affine Decision Rules), which refers to the common approximation technique of restricting the space of decision rules to be affine functions. We train all methods using the same training sets and the same validation sets (with size equal to $30\%$ of the training size), and we approximate the expected loss achieved by averaging across a common testing set of $10^5$ sample paths. In addition, we compute lower bounds for the optimal expected loss when $T\leq 5$ (see Appendix \ref{lower_bound}), which allows us to analyze the optimality gap for the different methods.

Multiple data sets were generated to analyze the performance of the algorithms as we increase the number of periods, the training size, the dimension of the data and the dimension of the controls. In each case we analyze the average out-of-sample loss  and the size $M$ of the data matrix, which refers to number of parameters per control. We also analyze the computational time for each iteration of Stochastic Gradient Descent (projected or not projected), and the evaluation time (time it takes to evaluate the empirical loss function $E_{\mathcal{S}}^{\lambda}(\bu)$ given the parameters for the functional representation of $\bu$). Notice that since the stochastic gradient descent algorithm does not strictly descend, the empirical loss of the validation set needs to be evaluated every certain number of iterations, which makes the evaluation time part of the total training time.
\subsubsection{Varying the Number of Periods: ($L = 150, \ell =200, q = r = 1, N=2000, T=2,3,4,5$)}

In Figure \ref{fig:num_periods_gd}, we observe that the convergence trajectory is not significantly affected by the pruning step, and the number of iterations needed until convergence does not change much for $T\geq 3$. In addition, we see in Figure \ref{fig:num_periods_lowbound} that ADR results in very poor performance, while both the SMOK and MOK algorithms are quite close to the lower bounds found for the optimal expected loss. In Figure \ref{fig:num_periods_iter} we observe that the time per iteration of stochastic gradient descent grows linearly for both SMOK and MOK, but MOK takes longer times due to the overhead introduced by the pruning step. The evaluation time (Figure \ref{fig:num_periods_eval}) also grows linearly for both algorithms, although unlike the time per iteration, the slope is larger for MOK than for SMOK because the number of parameters is significantly smaller for this last method (SMOK algorithm reduced the size $M$ of the data matrix from $2000$ to values below $15$).\\

\begin{figure}[!htp]
     \centering
     \begin{subfigure}[b]{0.4\textwidth}
         \centering
         \includegraphics[trim=0 0 0 20,clip,width=\textwidth]{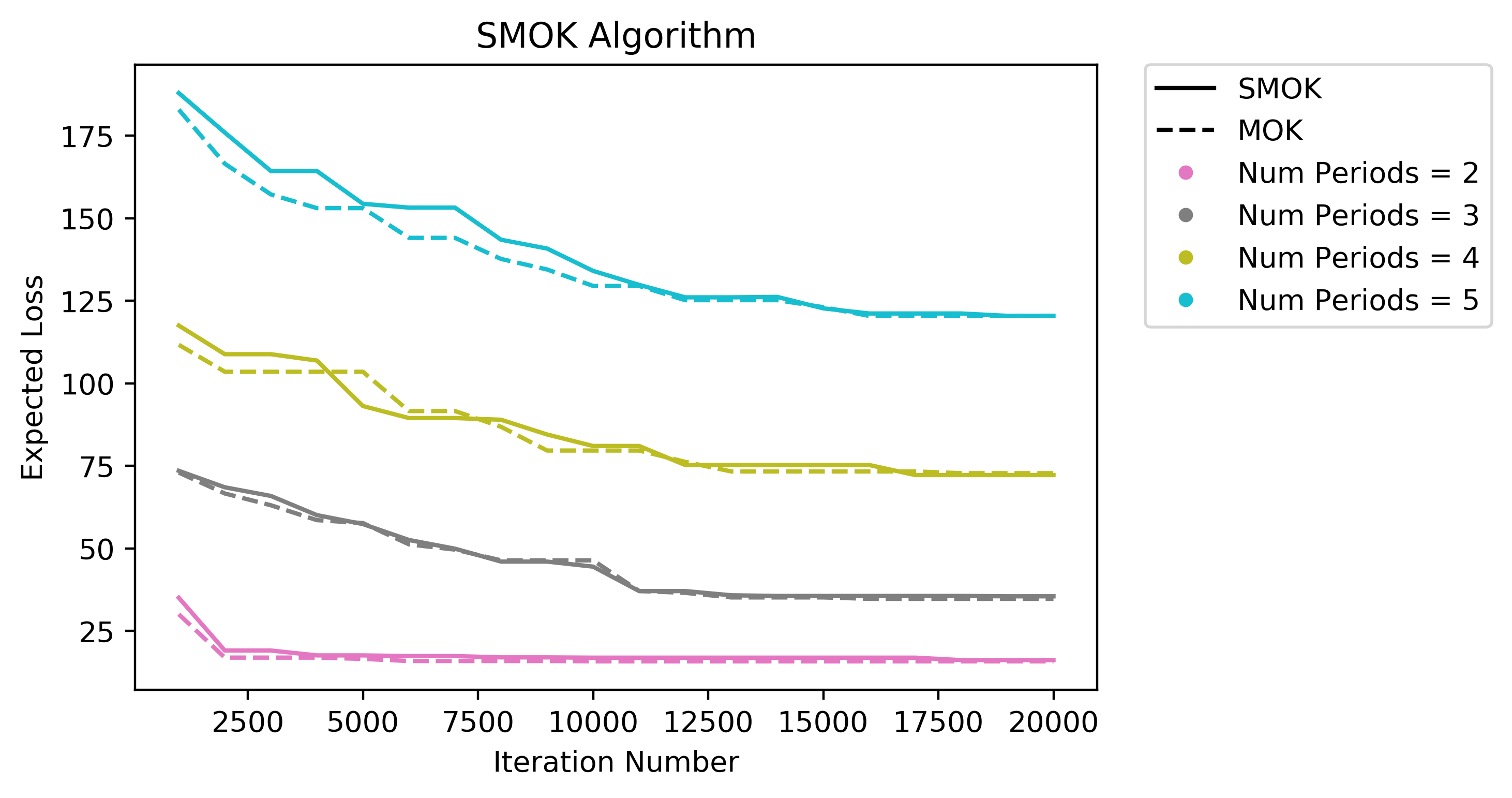}
         \caption{}
         \label{fig:num_periods_gd}
     \end{subfigure}
     \begin{subfigure}[b]{0.4\textwidth}
         \centering
         \includegraphics[trim=0 0 0 20,clip,width=\textwidth]{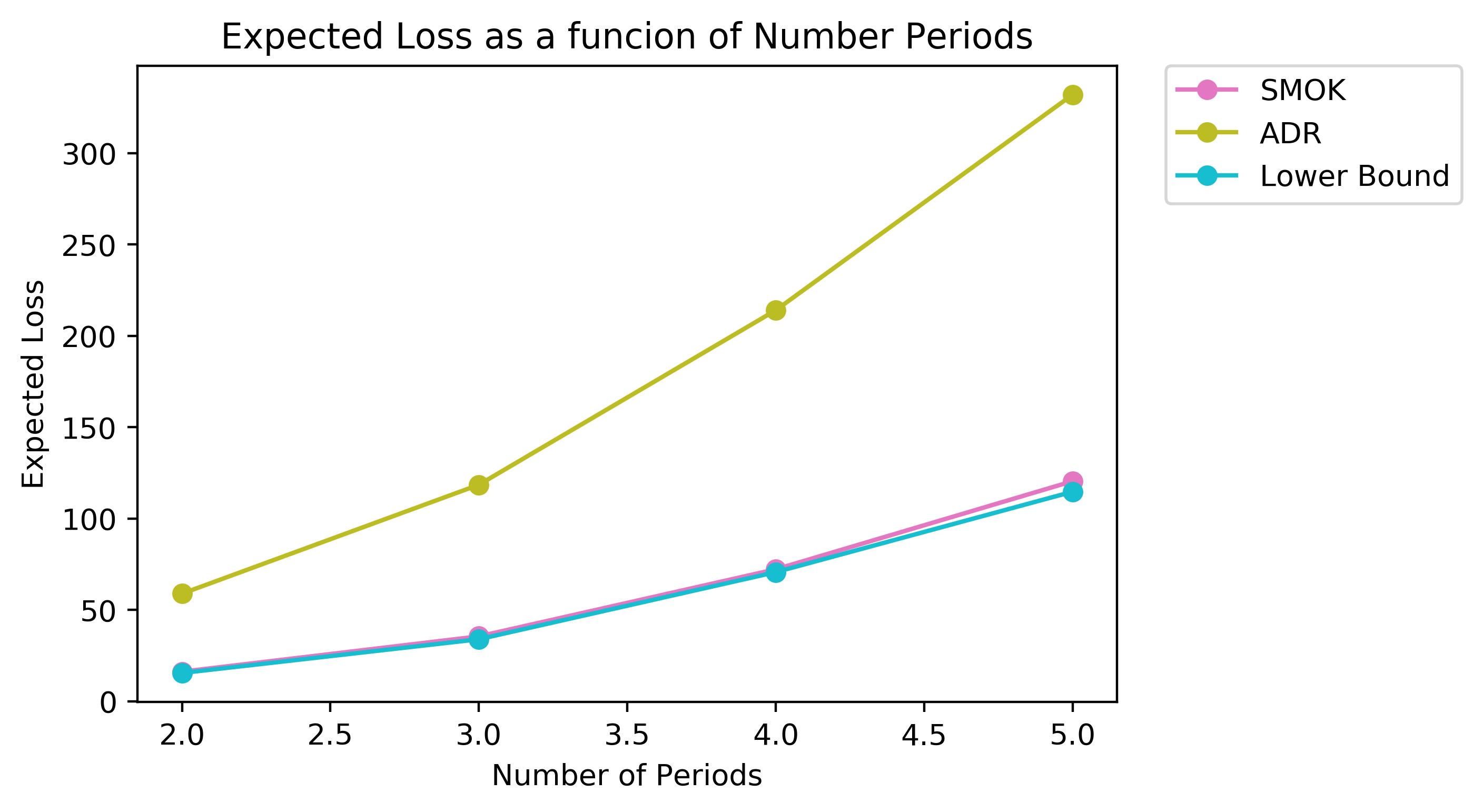}
         \caption{}
         \label{fig:num_periods_lowbound}
     \end{subfigure}\\
     \begin{subfigure}[b]{0.4\textwidth}
         \centering
         \includegraphics[ width=\textwidth]{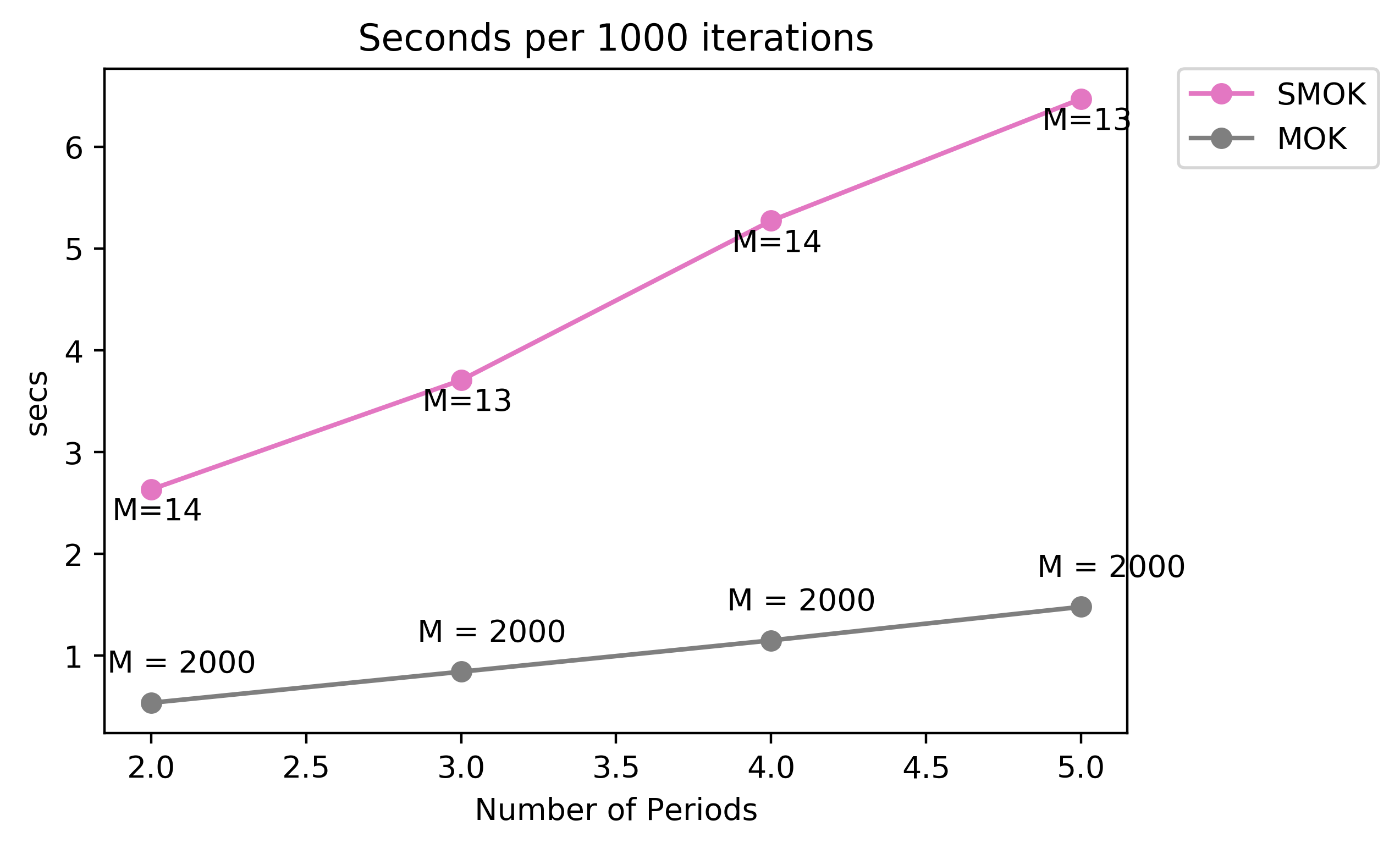}
         \caption{}
         \label{fig:num_periods_iter}
     \end{subfigure}
     \begin{subfigure}[b]{0.4\textwidth}
         \centering
         \includegraphics[width=\textwidth]{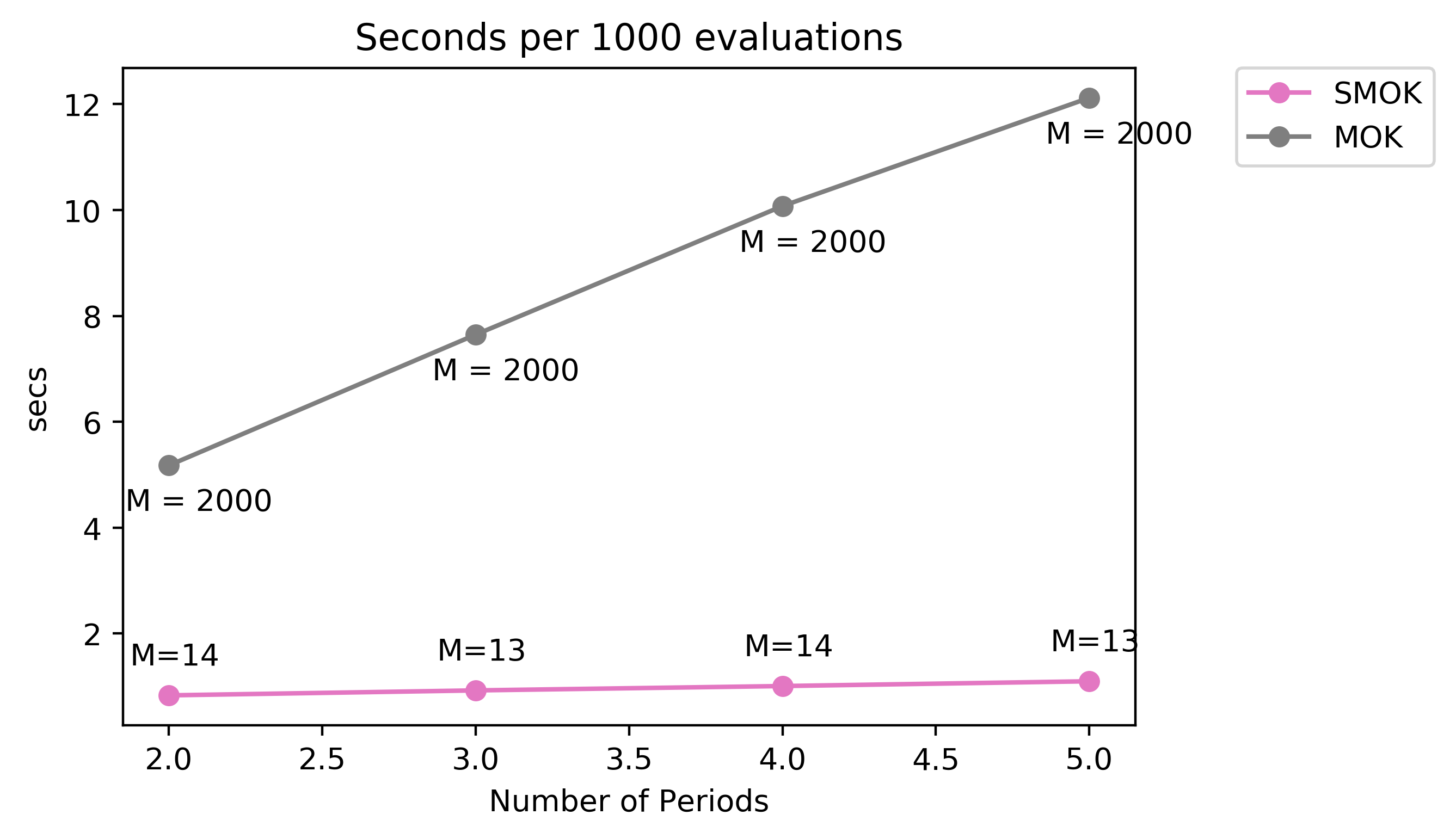}
         \caption{}
         \label{fig:num_periods_eval}
     \end{subfigure}
        \caption{Expected loss and computational time for varying number of periods.}
        \label{fig:num_periods_loss}
\end{figure}

\subsubsection{Varying the Data Size: ($L = 150, \ell =200, q = r = 1, T=3, N=10,100,1000,4000,7000,10000$)} 

Figure \ref{fig:data_size_lowbound} shows that, as anticipated, the expected loss achieved by both MOK and SMOK algorithms decreases as the size of the training set becomes larger. The number of iterations required to reach convergence (Figure \ref{fig:data_size_gd}) does not change much with the data size and the expected loss achieved remains relatively constant after a large enough training size, which occurs around $N=1000$. In Figures \ref{fig:data_size_iter},\ref{fig:data_size_eval} we can observe a significant memory improvement of SMOK over MOK when $N$ becomes very large. For $N=10^4$, for example, SMOK outputs decision rules with only $11$ parameters, while SMOK requires $10^4$ parameters per control. The evaluation time in Figure \ref{fig:data_size_eval} grows quadratically with the number of parameters in each control (the quadratic factor comes from computing the kernel matrix $\bbk_t[\bbd_t, \bbd_t]$), which in the case of MOK corresponds to the size of the training set. Since the SMOK algorithm has much fewer parameters, it takes under half a second to evaluate the average loss of $1000$ samples regardless of the training data size. Notice that the time per iteration (Figure \ref{fig:data_size_iter}) is higher for SMOK than for MOK when $N$ is small due to the pruning step. However, we observe that the time per iteration increases linearly for MOK while it stabilizes for SMOK, implying that for bigger values of $N$ the SMOK method actually takes less time per iteration and per evaluation. 
\begin{figure}[!htp]
     \centering
     \begin{subfigure}[b]{0.4\textwidth}
         \centering
         \includegraphics[trim=0 0 0 20,clip,width=\textwidth]{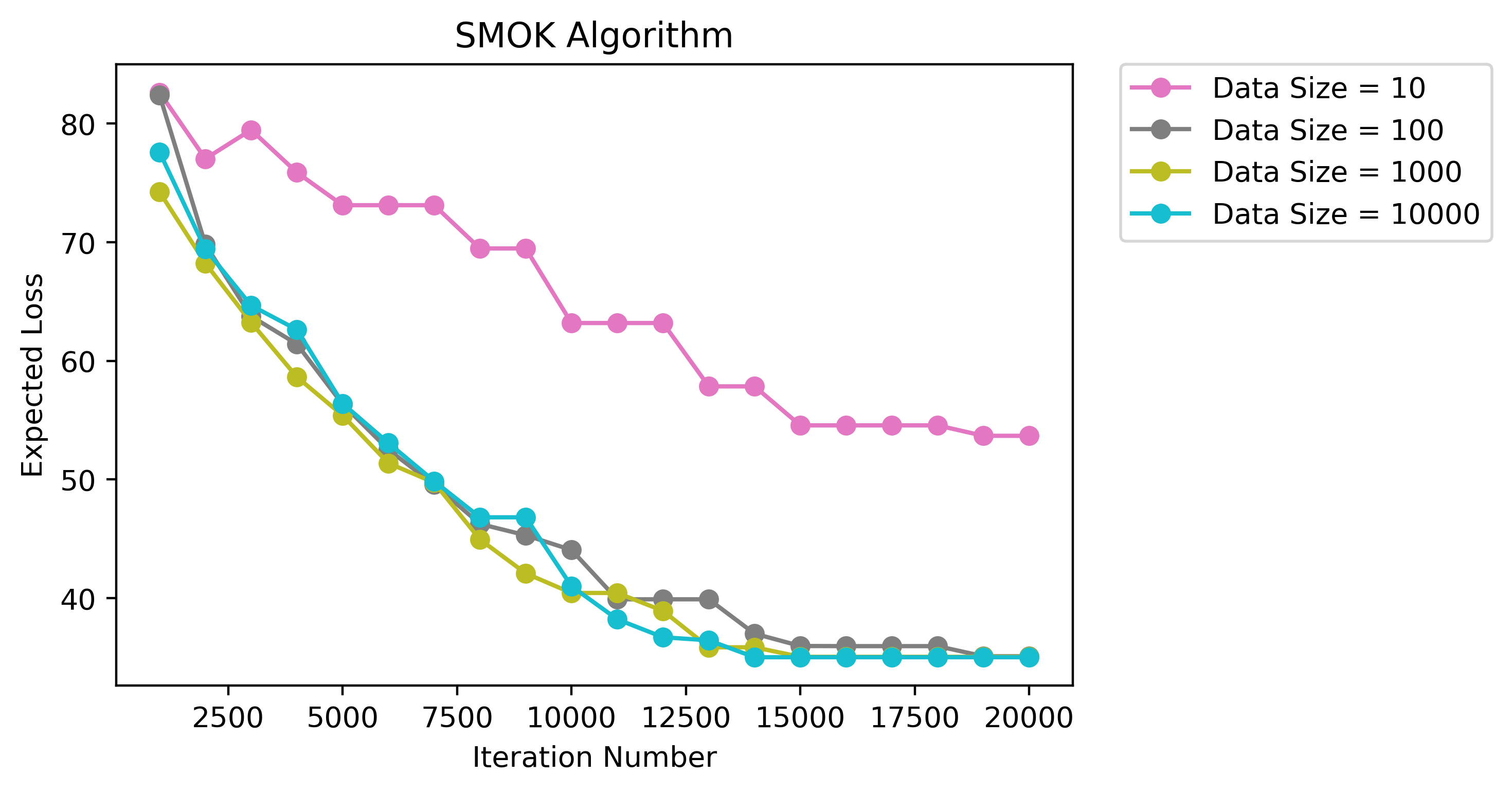}
         \caption{}
         \label{fig:data_size_gd}
     \end{subfigure}
     \begin{subfigure}[b]{0.4\textwidth}
         \centering
         \includegraphics[trim=0 0 0 20,clip,width=\textwidth]{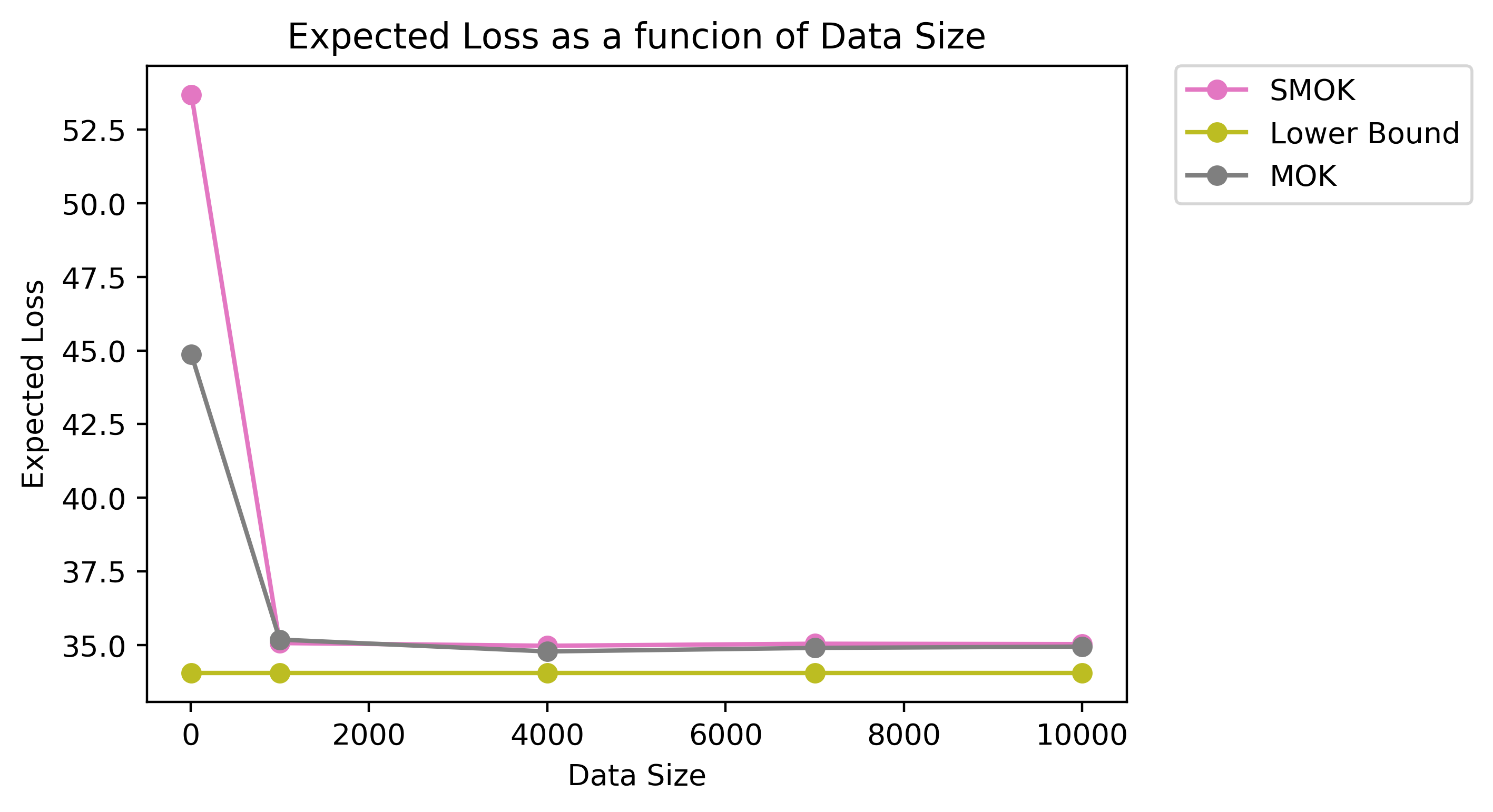}
         \caption{}
         \label{fig:data_size_lowbound}
    \end{subfigure}
        \begin{subfigure}[b]{0.4\textwidth}
         \centering
         \includegraphics[width=\textwidth]{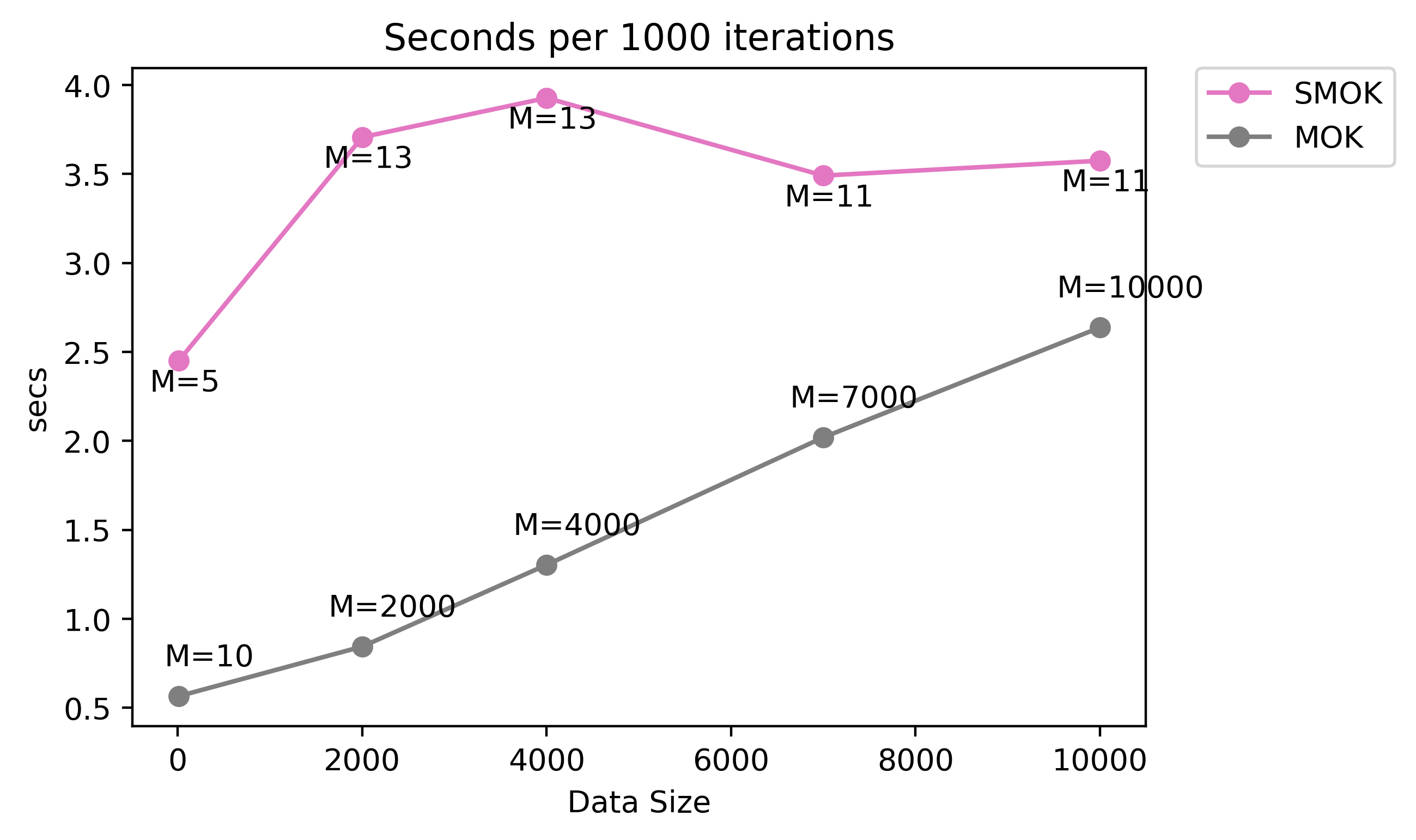}
         \caption{}
         \label{fig:data_size_iter}
     \end{subfigure}
     \begin{subfigure}[b]{0.4\textwidth}
         \centering
         \includegraphics[width=\textwidth]{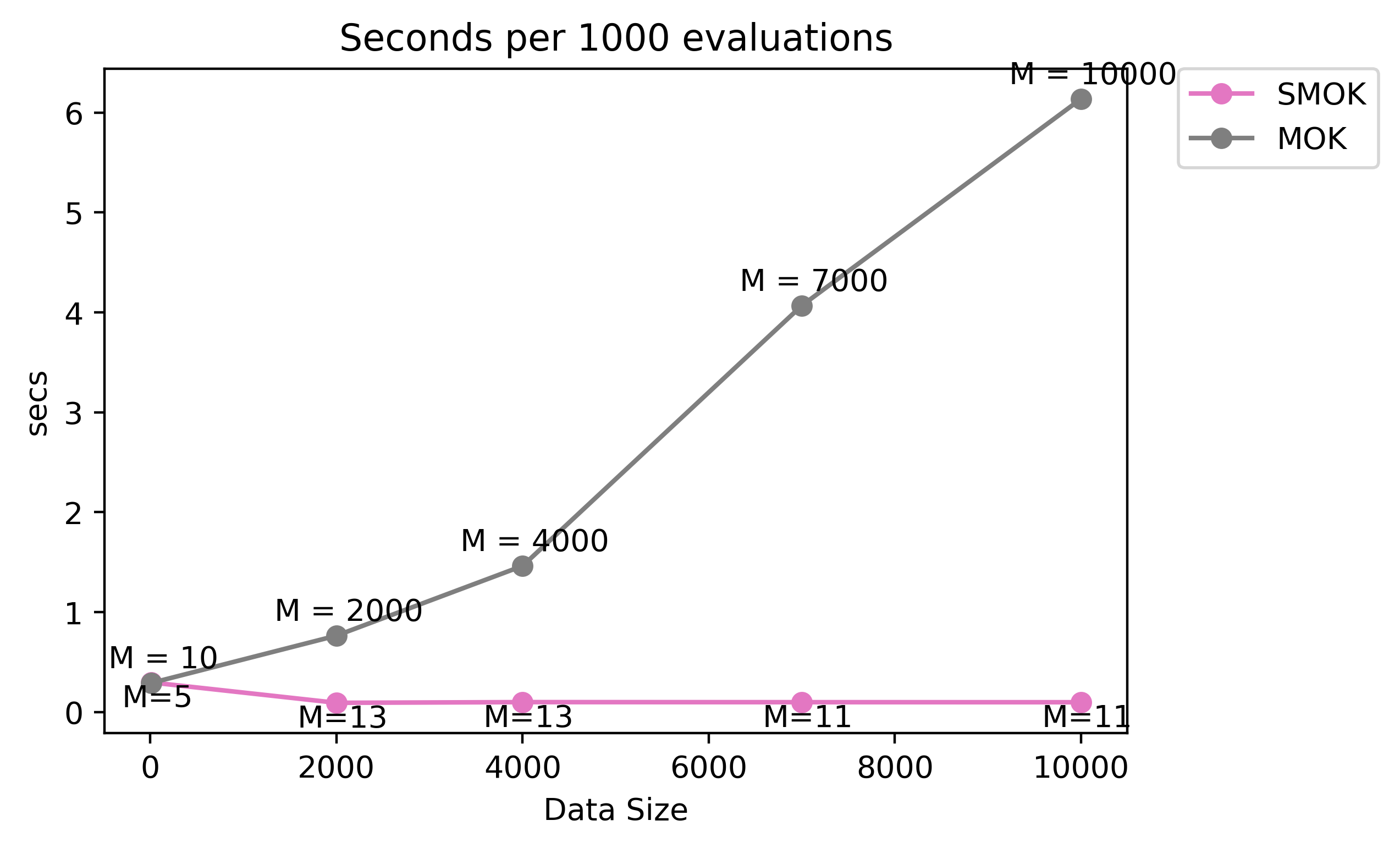}
         \caption{}
         \label{fig:data_size_eval}
     \end{subfigure}
        \caption{Expected loss and computational time for varying data sizes.}
        \label{fig:data_size_loss}
\end{figure}
\subsubsection{Varying Data Dimension: ($L = 150, \ell =200, r = 1, T=3, N=2000, q = 1, 10, 20, 30, 40, 50$)}  When generating data sets for this part we enforce that the value $\sum_q{(\bw_t)_q}$ remains constant for all $t\in [T]$, which guarantees that the optimal expected loss is the same across instances. In Figure \ref{fig:data_dim_gd}, we observe that the trajectories of the expected loss across the FSGD iterations are quite similar for all the different dimensions of the data. More importantly, the error gap does not worsen as the dimension of the data increases (Figure \ref{fig:data_dim_lowbound}), showing that the accuracy of our algorithms does not worsen for data sets in large dimensional spaces. Additionally, in Figure \ref{fig:data_dim_eval} we observe that there is a slight linear increase in the evaluation time for both SMOK and MOK algorithms, which is expected since the dimension of the demand vector affects the computation of the exponent in the Gaussian kernel. In terms of the iteration time (Figure \ref{fig:data_dim_iter}), we can see that SMOK remains quite stable around $4$ seconds per $1000$ iterations, while MOK shows linear increase. As in the previous examples, the number of parameters of the SMOK algorithm is quite similar across the different experiments and remains under $15$.
\begin{figure}[!htp]
     \centering
     \begin{subfigure}[b]{0.4\textwidth}
         \centering
         \includegraphics[trim=0 0 0 20,clip,width=\textwidth]{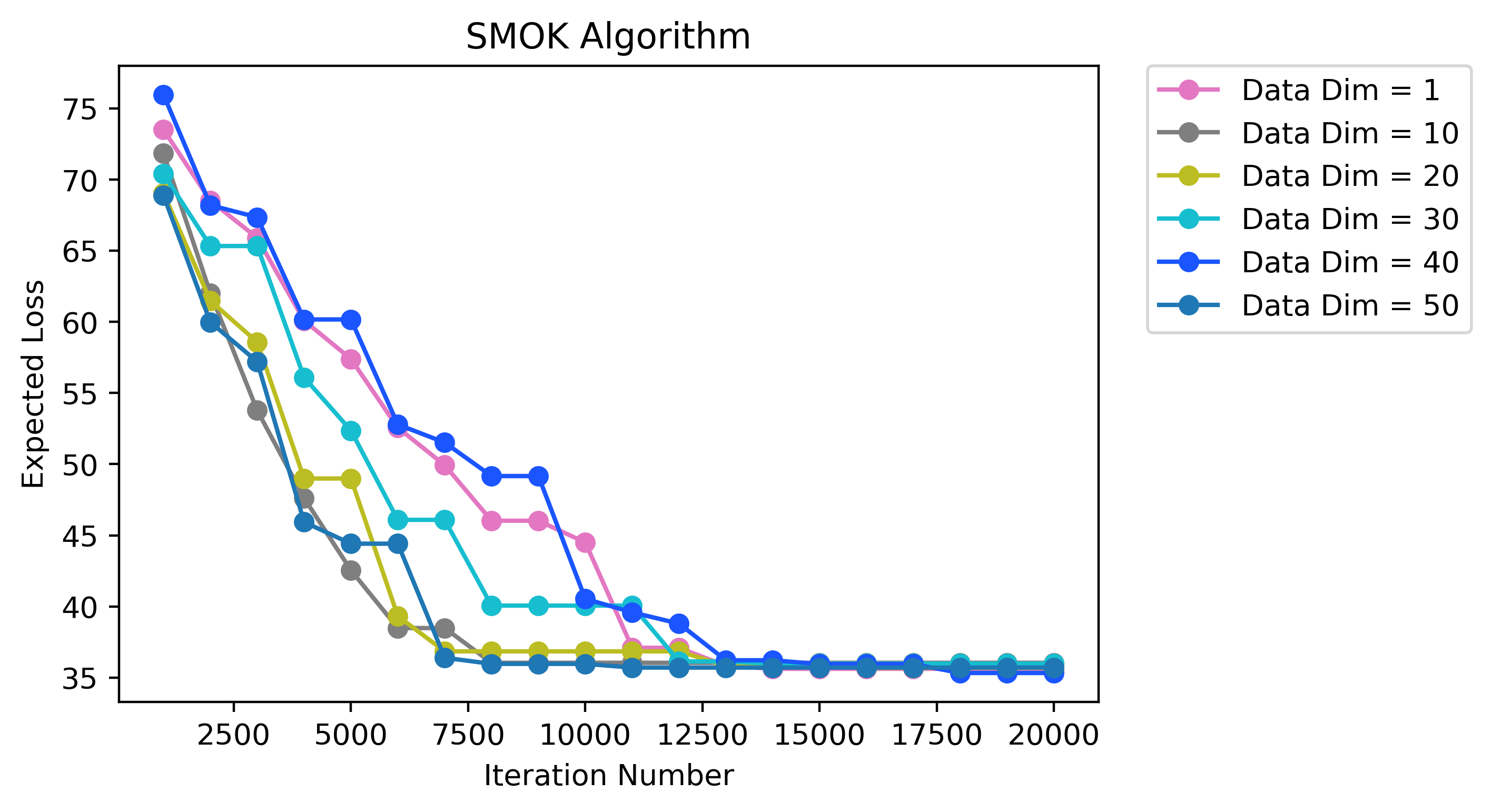}
         \caption{}
         \label{fig:data_dim_gd}
     \end{subfigure}
     \begin{subfigure}[b]{0.4\textwidth}
         \centering
         \includegraphics[trim=0 0 0 20,clip,width=\textwidth]{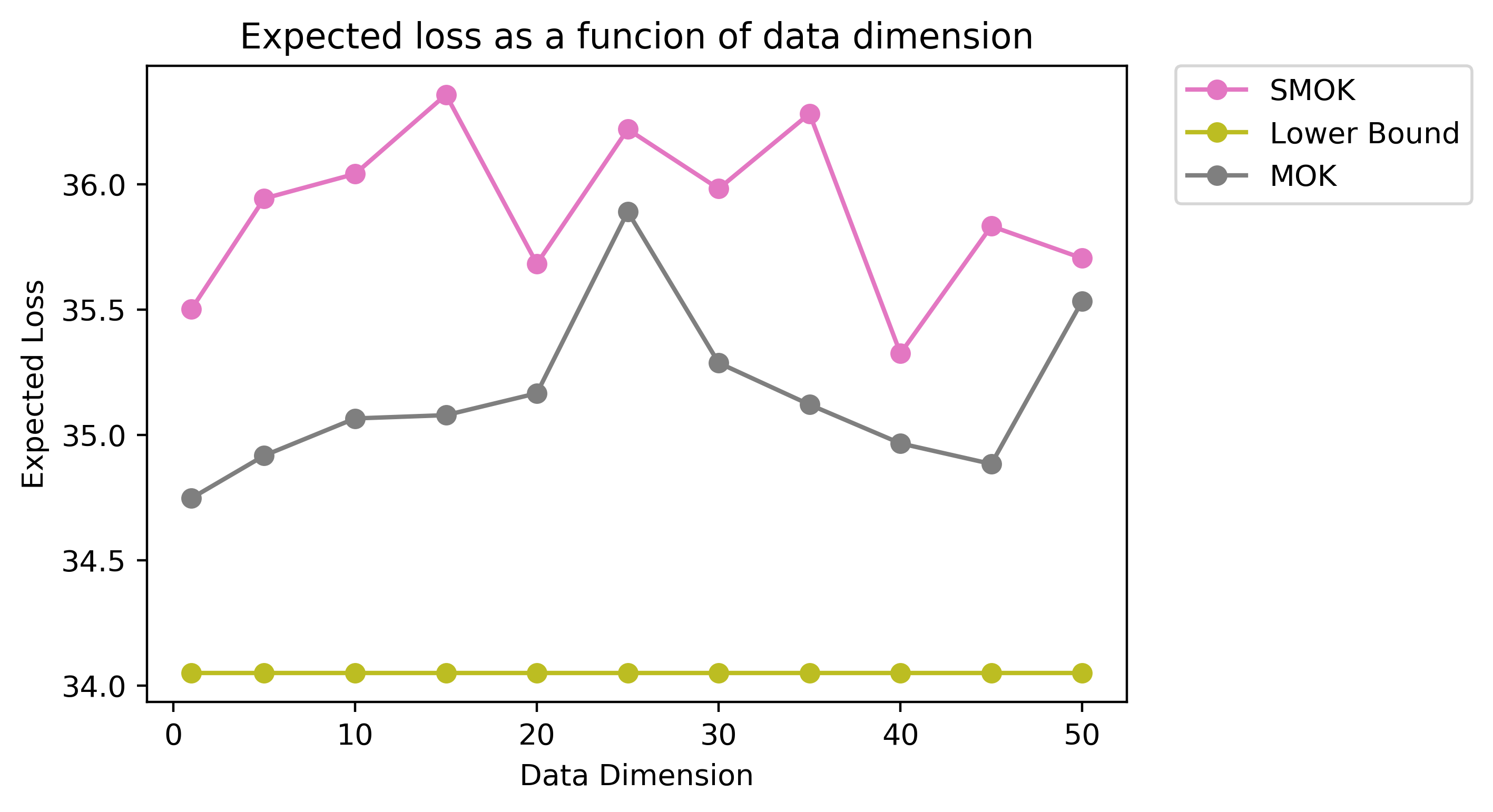}
         \caption{}
         \label{fig:data_dim_lowbound}
     \end{subfigure}
     \begin{subfigure}[b]{0.4\textwidth}
         \centering
         \includegraphics[width=\textwidth]{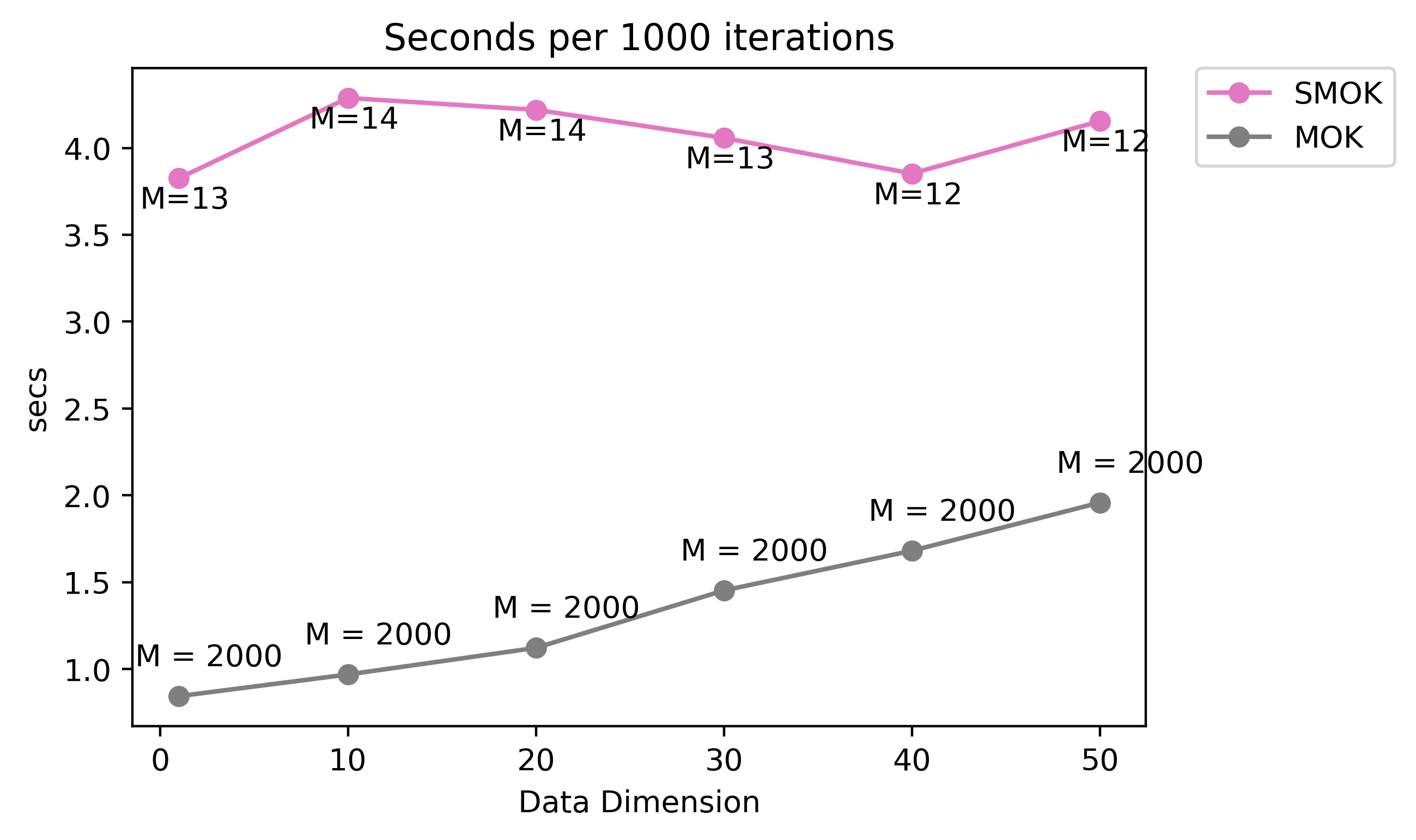}
         \caption{}
         \label{fig:data_dim_iter}
     \end{subfigure}
     \begin{subfigure}[b]{0.4\textwidth}
         \centering
         \includegraphics[width=\textwidth]{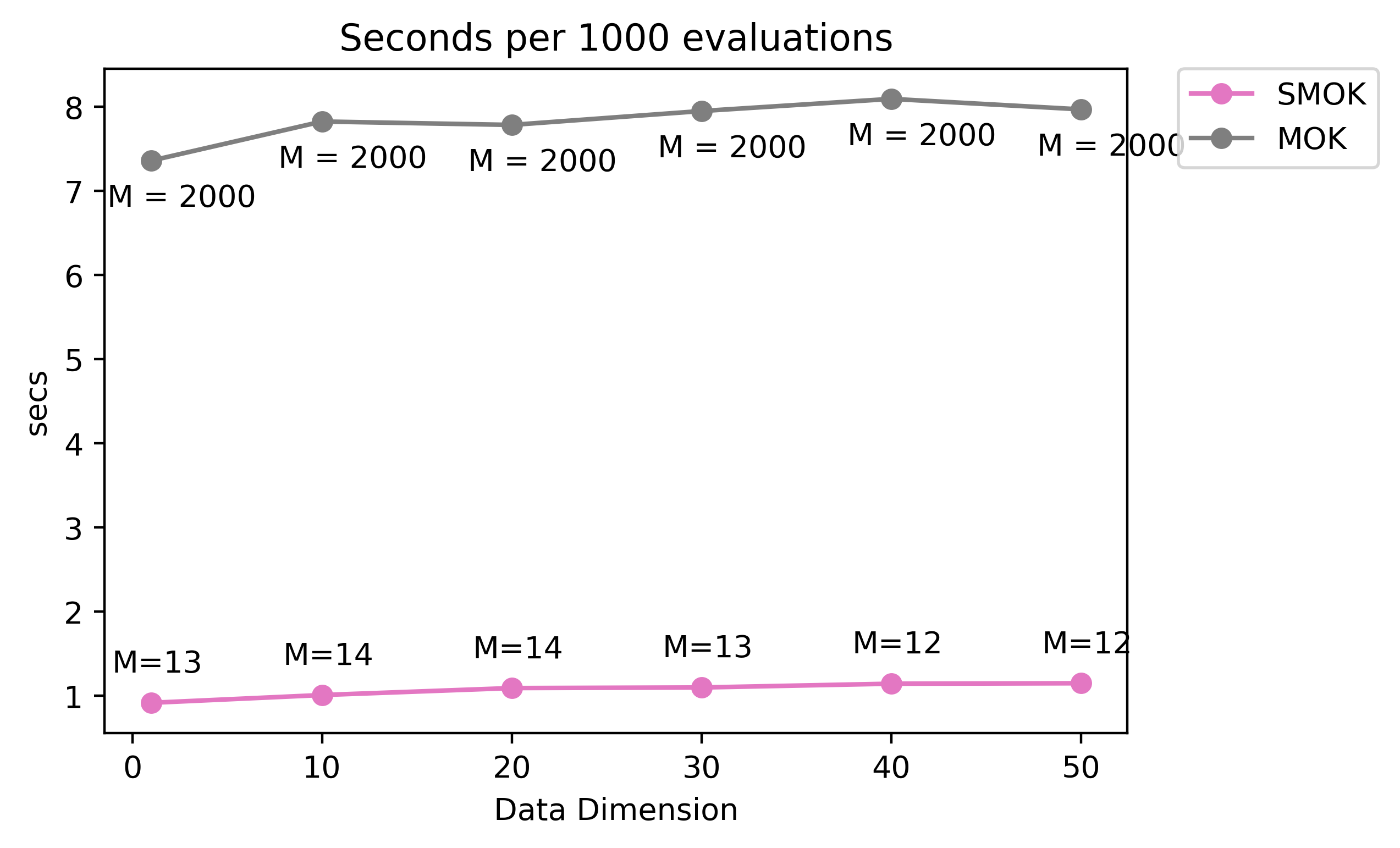}
         \caption{}
         \label{fig:data_dim_eval}
     \end{subfigure}
        \caption{Expected loss and computational time for varying data dimensions.}
        \label{fig:data_dim_loss}
\end{figure}

\subsubsection{Varying Control Dimension: ($L = 150, \ell =\frac{200}{r}, q = 1, T=3, N=2000, r = 1, 3, 5, 10$)}  {In order to make a fair comparison, we set $L = \frac{150}{r}$ and $\ell = \frac{200}{r}$, which guarantees that the optimal expected loss is the same across instances.} We observe in Figure \ref{fig:control_dim_lowbound} that the SMOK and MOK algorithms achieve very similar average out-of-sample loss across the different dimensions, and there are a couple of scenarios in which the pruning step helped to improve the expected loss. In addition, the number of iterations required for convergence (Figure \ref{fig:control_dim_gd}) does not seem to depend on the dimension of the control. Lastly, in Figure \ref{fig:control_dim_iter} we observe a slight linear increase in iteration time for both SMOK and MOK algorithms, with MOK having an advantage of around $4$ seconds per $1000$ iterations. In terms of evaluation time (Figure \ref{fig:control_dim_eval}) both algorithms grow linearly. As in the previous examples, the number of parameters for the SMOK algorithm is very low and varies between $13$ and $14$ across the different experiments.
\begin{figure}[!htp]
     \centering
     \begin{subfigure}[b]{0.4\textwidth}
         \centering
         \includegraphics[trim=0 0 0 20,clip,width=\textwidth]{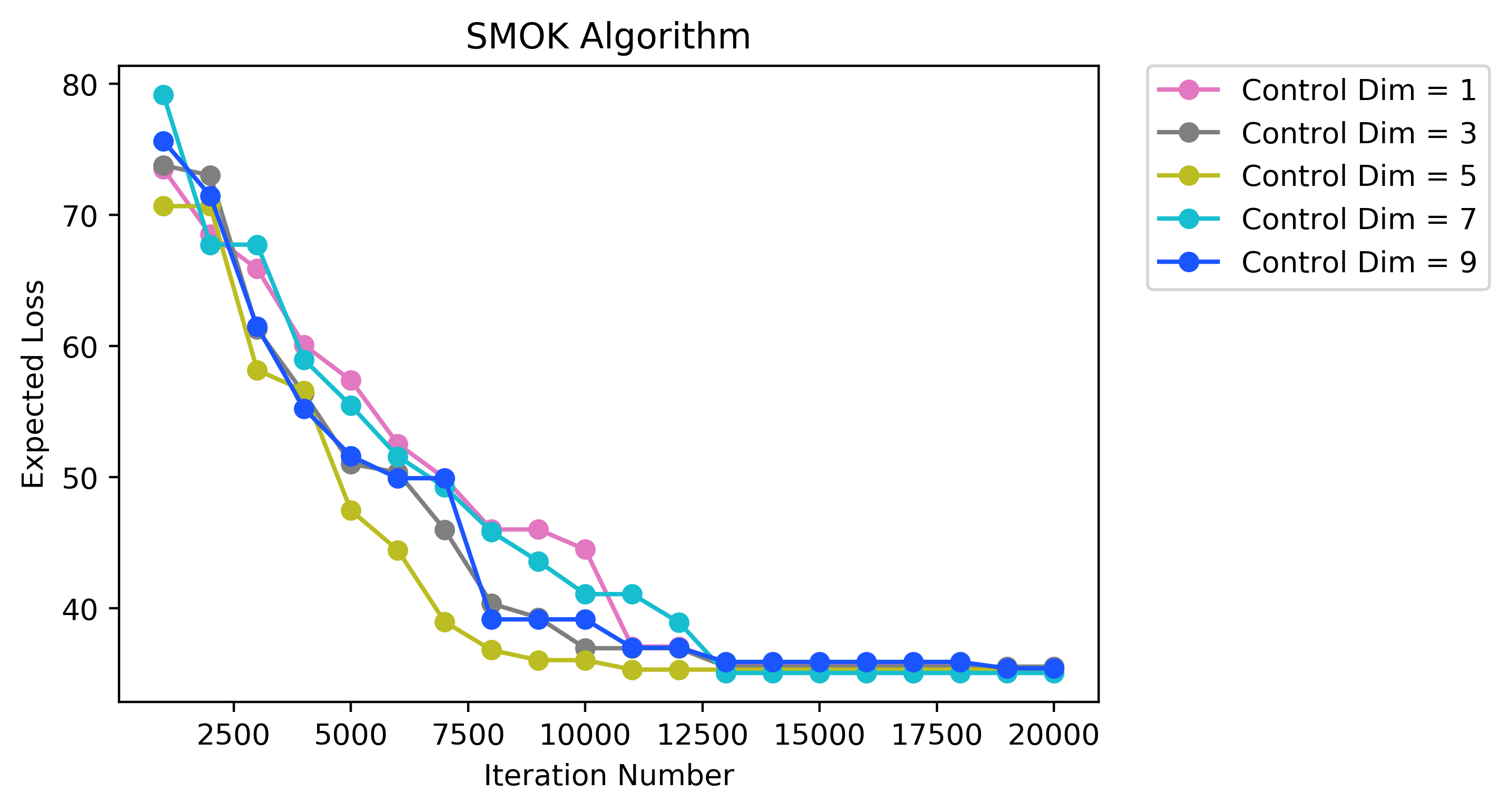}
         \caption{}
         \label{fig:control_dim_gd}
     \end{subfigure}
     \begin{subfigure}[b]{0.4\textwidth}
         \centering
         \includegraphics[trim=0 0 0 20,clip,width=\textwidth]{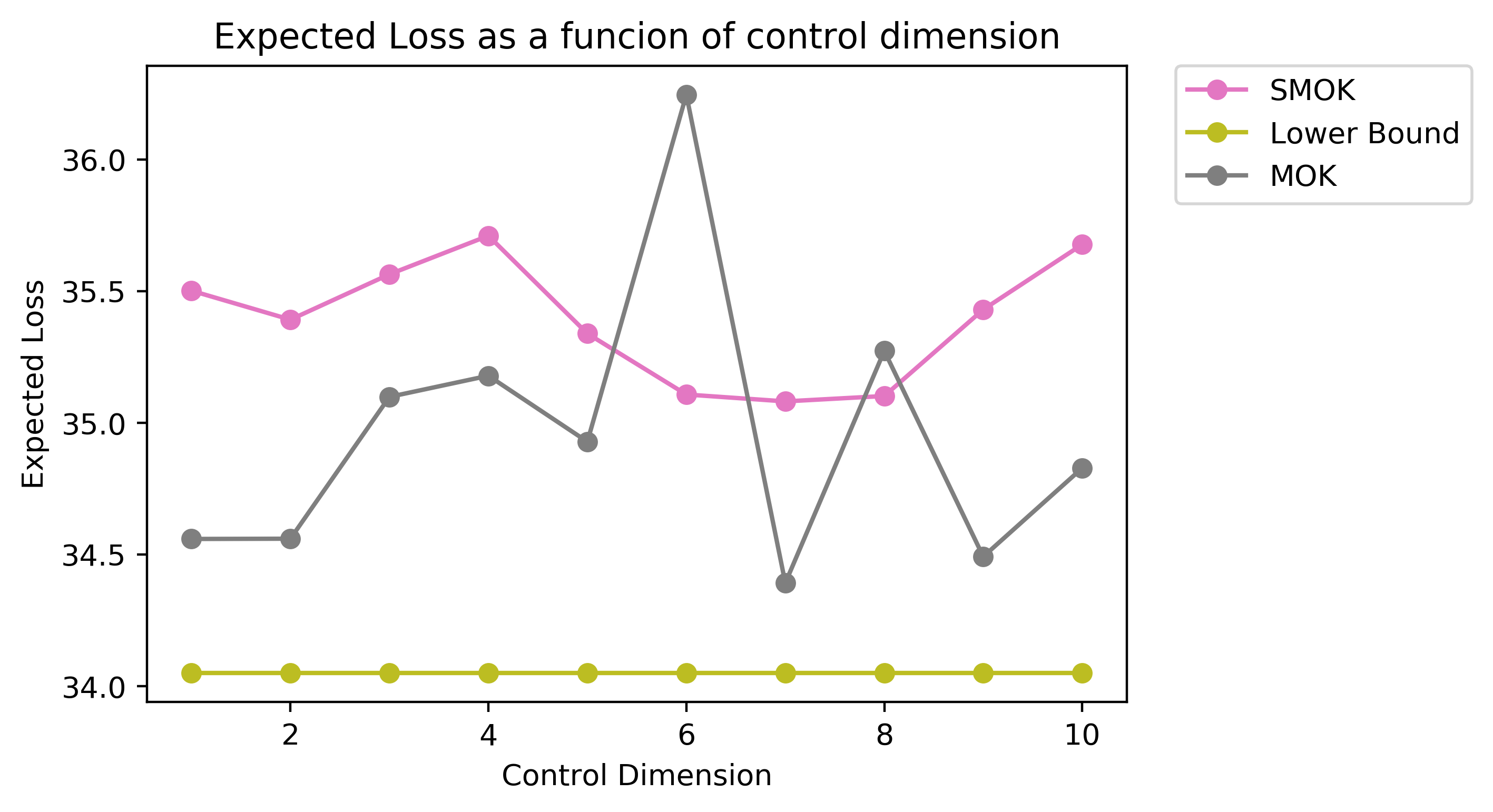}
         \caption{}
         \label{fig:control_dim_lowbound}
     \end{subfigure}
     \begin{subfigure}[b]{0.4\textwidth}
         \centering
         \includegraphics[width=\textwidth]{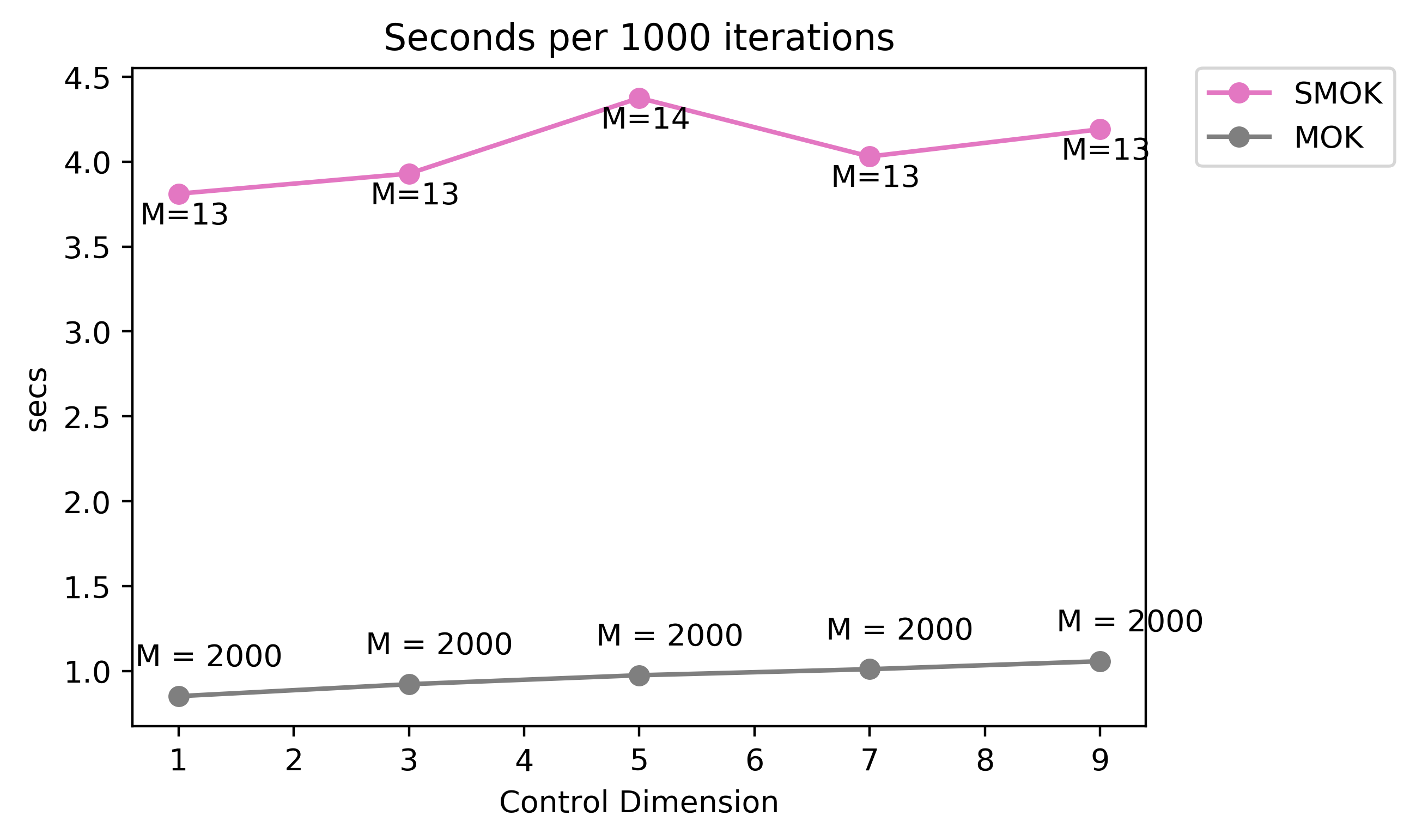}
         \caption{}
         \label{fig:control_dim_iter}
     \end{subfigure}
     \begin{subfigure}[b]{0.4\textwidth}
         \centering
         \includegraphics[width=\textwidth]{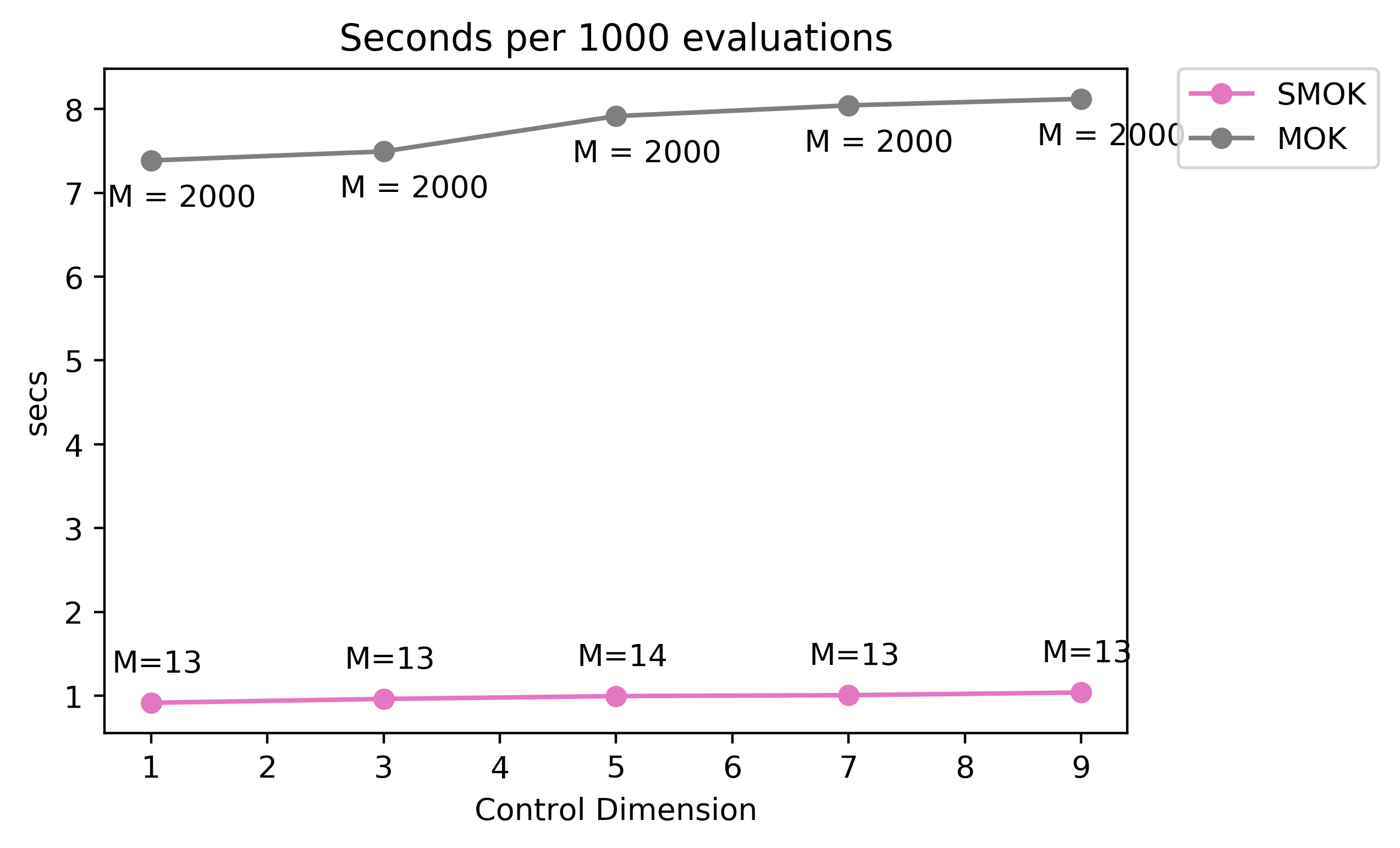}
         \caption{}
         \label{fig:control_dim_eval}
     \end{subfigure}
        \caption{Expected loss and computational time for varying control dimensions.}
        \label{fig:control_dim_loss}
\end{figure}

\subsection{Shipment Planning}
We next analyze a two-stage shipment planning problem, following the same problem setting as in \cite{bertsimas2022dynamic} and \cite{Bertsimas2015b}. In this example, a decision maker has access to side information {$\boldsymbol{x}$} (market trends, advertisements, etc.) and the goal is to ship items from the production facilities to multiple locations as to satisfy demand at minimum cost. First, the decision maker chooses an initial inventory quantity $u_{1f}\geq 0$ to be produced in each of the production facilities $f\in [F]$ at a per unit cost of $p_1$. Next, the demands $w_\ell \geq 0$ are observed in each location $\ell \in [L]$. If needed, the decision maker can produce additional units in each facility to satisfy demand, but at a higher per unit cost $p_2 >p_1$. Finally, demand is fulfilled by shipping $u_{2f\ell}$ units from facility $f$ to location $\ell$ at per-unit cost $c_{f\ell}$, and each unit of satisfied demand generates revenue $a>0$. The multistage optimization problem can then be written as
\begin{align*}
    \min_{\bu_1, \bu_2} & \mathbb{E}_{ \bw | \bx}\!\!\left[ \!p_1\!\!\sum_{f = 1}^F \!u_{1f} \!- a\!\!\!\sum_{\ell \in [L]} \!\!\!w_{\ell} + p_2\!\sum_{f = 1}^F\left[\sum_{\ell=1 }^L\! u_{2f\ell} - u_{1 f}\right]^+\! \!\!\!\!\!+\!\!\! \sum_{f = 1}^F \sum_{\ell = 1}^L \!c_{f\ell}\: u_{2f\ell} \bigg \vert  \bx = \bx_0 \!\right]\\
    &\text{s.t.} \quad \sum_{f = 1}^F u_{2f\ell} \geq w_\ell, \quad \forall \ell\in [L], \forall \boldsymbol{w} \in \mathcal{W},
\end{align*}
where $\mathcal{W}$ is the set of all possible demand realizations. We reproduced the computational experiments performed in \cite{bertsimas2022dynamic} using the same parameters, the same data generation procedure as well as the same data set sizes. More specifically, we use $F = 4, L=12, p_1 = 5, p_2 = 100$ and $a = 90$.  The costs $\boldsymbol{c}$ and covariates $\boldsymbol{x}$ are also generated in an identical manner as in \cite{bertsimas2022dynamic}.

We compare the SMOK and MOK algorithms against  \textbf{SRO}, \textbf{SRO-knn} and \textbf{SAA-knn}. We train all methods over 100 independent training sets and evaluate them on a test set of size 100. The average out-of-sample profits achieved across the different methods are shown in Table \ref{table:shipment-results}. We observe that both MOK and SMOK outperform the other methods, with MOK achieving the highest revenues. However, as observed in Table \ref{table:shipment-time}, only the SAA and SMOK methods have tractable growth as the data size increases. In particular, the SMOK algorithm achieves high accuracies using only around $60$ parameters per decision even when the data size increases to large numbers.

\begin{table}[!htp]
    \begin{tabular}{|P{0.6cm}|P{1cm}|P{1cm}|P{1.2cm}|P{1.2cm}|P{1.2cm}|P{1.2cm}|P{1cm}|}
    \hline
         \scalebox{0.9}{$\boldsymbol{N}$} &  \scalebox{0.9}{\textbf{SRO}} &  \scalebox{0.9}{\textbf{SRO}} & \scalebox{0.9}{\textbf{SRO-knn}}  &  \scalebox{0.9}{\textbf{SRO-knn}}   & \scalebox{0.9}{\textbf{SAA-knn}} & \scalebox{0.9}{\textbf{MOK}} & \scalebox{0.9}{\textbf{SMOK}}  \\
          & \scalebox{0.9}{($\epsilon = 100$)} & \scalebox{0.9}{($\epsilon = 500$)} & \scalebox{0.9}{($\epsilon = 100$)}& \scalebox{0.9}{($\epsilon = 500$)}& & &\\
         \hline
         \hline
         \boldsymbol{$100$}& 160007.0 & 159866.7 & 157522.9 & 158671.5 & 156639.6 & \textbf{161536.9}&160737.0\\
         \hline
         \boldsymbol{$200$}& 160221.1 & 160075.0 & 157863.5 &159136.9 & 156911.9&\textbf{164050.1}& 163039.2\\
         \hline
         \boldsymbol{$300$} & 160431.0 & 160145.6 & 158697.6 & 159656.2& 157669.6& \textbf{164860.6}&163703.8\\
         \hline
    \end{tabular}
    \caption{Out-of-sample profit for the shipment planning problem.}
    \label{table:shipment-results}
\end{table}

\begin{table}[!htp]
    \centering
    \begin{tabular}{|P{0.6cm}|P{1cm}|P{1cm}|P{1.2cm}|P{1.2cm}|P{1.2cm}|P{1.2cm}|P{1cm}|}
    \hline
         \scalebox{0.9}{$\boldsymbol{N}$} & \scalebox{0.9}{\textbf{SRO}} & \scalebox{0.9}{\textbf{SRO}} &\scalebox{0.9}{\textbf{SRO-knn}} &  \scalebox{0.9}{\textbf{SRO-knn}}   & \scalebox{0.9}{\textbf{SAA-knn}} & \scalebox{0.9}{\textbf{MOK}} & \scalebox{0.9}{\textbf{SMOK}}  \\
          &\scalebox{0.9}{($\epsilon = 100$)} & \scalebox{0.9}{($\epsilon = 500$)} & \scalebox{0.9}{($\epsilon = 100$)} & \scalebox{0.9}{($\epsilon = 500$)} & & & \\   
         \hline
         \hline
         \boldsymbol{$100$}& 8 & 6 & 30 & 35 & 4 & {38}&150\\
         \hline
         \boldsymbol{$200$}& 11 & 12 & 78 & 75 &  4&{ 42}& 260\\
         \hline
         \boldsymbol{$300$} &19 & 21 & 125 &  132 &  4& {46}&240\\
         \hline
         \boldsymbol{$500$} &38 & 39 & 276 & 280 &  5& 50& 245\\
         \hline
         \boldsymbol{$1000$} & 74 & 76 & 772 &  790&  10& {65}& 255\\
         \hline
         \boldsymbol{$5000$} & 559 & 581 & 54000 & 54100& 54& {288}& 252\\
         \hline
    \end{tabular}
    \caption{Total computation time (seconds) for solving one instance of the shipment planning problem.}
    \label{table:shipment-time}
\end{table}

\section{Conclusion}
In this work, we developed a tractable data-driven approach for solving multistage stochastic optimization problems in which the uncertainties are independent of previous decisions. We represented the decision rules as elements of a reproducing kernel Hilbert space and performed functional stochastic gradient descent to minimize the empirical regularized loss. We next incorporated sparsification techniques based on function subspace projections, which decreased the number of parameters per controller. We prove that the proposed approach is asymptotically optimal for multistage stochastic programming with side information. 

The practical value of the proposed data-driven approach was shown across various computational experiments on stochastic inventory management problems, demonstrating that it produces high-quality decisions, does not worsen in multidimensional settings and remains tractable even with large data sizes. This approach does not rely on the traditional use of approximation with scenario trees, and provides a novel method for leveraging advances in machine learning to solve multistage stochastic optimization problems.\\

\noindent \textbf{Statements and Declarations.} The authors declare that no funds, grants, or other support were received during the preparation of this manuscript. The authors have no relevant financial or non-financial interests to disclose.
\newpage


%
%

\bibliographystyle{spbasic}      

\bibliography{references}

\begin{thebibliography}{29}
\providecommand{\natexlab}[1]{#1}
\providecommand{\url}[1]{{#1}}
\providecommand{\urlprefix}{URL }
\expandafter\ifx\csname urlstyle\endcsname\relax
  \providecommand{\doi}[1]{DOI~\discretionary{}{}{}#1}\else
  \providecommand{\doi}{DOI~\discretionary{}{}{}\begingroup
  \urlstyle{rm}\Url}\fi
\providecommand{\eprint}[2][]{\url{#2}}

\bibitem[{Anthony and Bartlett(2009)}]{anthony2009neural}
Anthony M, Bartlett PL (2009) Neural network learning: Theoretical foundations.
  cambridge university press

\bibitem[{Ban et~al.(2019)Ban, Gallien, and Mersereau}]{ban2019dynamic}
Ban GY, Gallien J, Mersereau AJ (2019) Dynamic procurement of new products with
  covariate information: The residual tree method. Manufacturing \& Service
  Operations Management 21(4):798--815

\bibitem[{Bertsimas and Kallus(2020)}]{Bertsimas2015b}
Bertsimas D, Kallus N (2020) From predictive to prescriptive analytics.
  Management Science 66(3):1025--1044

\bibitem[{Bertsimas and Koduri(2022)}]{Bertsimas2020b}
Bertsimas D, Koduri N (2022) Data-driven optimization: A reproducing kernel
  hilbert space approach. Operations Research 70(1):454--471

\bibitem[{Bertsimas and McCord(2019)}]{bertsimas2019predictions}
Bertsimas D, McCord C (2019) From predictions to prescriptions in multistage
  optimization problems. Mathematical Programming, under review ArXiv preprint
  arXiv:1904.11637

\bibitem[{Bertsimas et~al.(2011)Bertsimas, Brown, and
  Caramanis}]{bertsimas2011theory}
Bertsimas D, Brown DB, Caramanis C (2011) Theory and applications of robust
  optimization. SIAM review 53(3):464--501

\bibitem[{Bertsimas et~al.(2022{\natexlab{a}})Bertsimas, McCord, and
  Sturt}]{bertsimas2022dynamic}
Bertsimas D, McCord C, Sturt B (2022{\natexlab{a}}) Dynamic optimization with
  side information. European Journal of Operational Research, to appear ArXiv
  preprint arXiv:1907.07307v2

\bibitem[{Bertsimas et~al.(2022{\natexlab{b}})Bertsimas, Shtern, and
  Sturt}]{bertsimas2018data}
Bertsimas D, Shtern S, Sturt B (2022{\natexlab{b}}) A data-driven approach to
  multistage stochastic linear optimization. Management Science, to appear
  Preprint at
  \url{https://dbertsim.mit.edu/pdfs/papers/2018-sturt-data-driven-two-stage-approach.pdf}

\bibitem[{Birge and Louveaux(2011)}]{Birge2011}
Birge JR, Louveaux F (2011) Introduction to Stochastic Programming. Springer

\bibitem[{Engel et~al.(2004)Engel, Mannor, and Meir}]{engel2004kernel}
Engel Y, Mannor S, Meir R (2004) The kernel recursive least-squares algorithm.
  IEEE Transactions on signal processing 52(8):2275--2285

\bibitem[{Hanasusanto and Kuhn(2013)}]{Hanasusanto2013}
Hanasusanto GA, Kuhn D (2013) Robust data-driven dynamic programming. Advances
  in Neural Information Processing Systems pp 827--835,
  \urlprefix\url{http://papers.nips.cc/paper/5123-robust-data-driven-dynamic-programming}

\bibitem[{Harris et~al.(2020)Harris, Millman, van~der Walt, Gommers, Virtanen,
  Cournapeau, Wieser, Taylor, Berg, Smith, Kern, Picus, Hoyer, van Kerkwijk,
  Brett, Haldane, del R{\'{i}}o, Wiebe, Peterson, G{\'{e}}rard-Marchant,
  Sheppard, Reddy, Weckesser, Abbasi, Gohlke, and Oliphant}]{harris2020array}
Harris CR, Millman KJ, van~der Walt SJ, Gommers R, Virtanen P, Cournapeau D,
  Wieser E, Taylor J, Berg S, Smith NJ, Kern R, Picus M, Hoyer S, van Kerkwijk
  MH, Brett M, Haldane A, del R{\'{i}}o JF, Wiebe M, Peterson P,
  G{\'{e}}rard-Marchant P, Sheppard K, Reddy T, Weckesser W, Abbasi H, Gohlke
  C, Oliphant TE (2020) Array programming with {NumPy}. Nature
  585(7825):357--362, \doi{10.1038/s41586-020-2649-2},
  \urlprefix\url{https://doi.org/10.1038/s41586-020-2649-2}

\bibitem[{Honeine(2011)}]{honeine2011online}
Honeine P (2011) Online kernel principal component analysis: A reduced-order
  model. IEEE transactions on pattern analysis and machine intelligence
  34(9):1814--1826

\bibitem[{Kivinen et~al.(2004)Kivinen, Smola, and
  Williamson}]{kivinen2004online}
Kivinen J, Smola AJ, Williamson RC (2004) Online learning with kernels. IEEE
  transactions on signal processing 52(8):2165--2176

\bibitem[{Koppel et~al.(2016)Koppel, Warnell, Stump, and
  Ribeiro}]{koppel2016parsimonious}
Koppel A, Warnell G, Stump E, Ribeiro A (2016) Parsimonious online learning
  with kernels via sparse projections in function space. \eprint{1612.04111}

\bibitem[{Micchelli et~al.(2006)Micchelli, Xu, and
  Zhang}]{micchelli2006universal}
Micchelli CA, Xu Y, Zhang H (2006) Universal kernels. Journal of Machine
  Learning Research 7(12)

\bibitem[{Norkin and Keyzer(2009)}]{norkin2009stochastic}
Norkin V, Keyzer M (2009) On stochastic optimization and statistical learning
  in reproducing kernel hilbert spaces by support vector machines (svm).
  Informatica 20(2):273--292

\bibitem[{Pati et~al.(1993)Pati, Rezaiifar, and
  Krishnaprasad}]{pati1993orthogonal}
Pati YC, Rezaiifar R, Krishnaprasad PS (1993) Orthogonal matching pursuit:
  Recursive function approximation with applications to wavelet decomposition.
  In: Proceedings of 27th Asilomar conference on signals, systems and
  computers, IEEE, pp 40--44

\bibitem[{Pflug and Pichler(2016)}]{pflug2016empirical}
Pflug GC, Pichler A (2016) From empirical observations to tree models for
  stochastic optimization: convergence properties. SIAM Journal on Optimization
  26(3):1715--1740

\bibitem[{Sch{\"o}lkopf et~al.(2002)Sch{\"o}lkopf, Smola, Bach
  et~al.}]{scholkopf2002learning}
Sch{\"o}lkopf B, Smola AJ, Bach F, et~al. (2002) Learning with kernels: support
  vector machines, regularization, optimization, and beyond. MIT press

\bibitem[{Shapiro et~al.(2014)Shapiro, Dentcheva, and
  Ruszczy{\'n}ski}]{shapiro2014lectures}
Shapiro A, Dentcheva D, Ruszczy{\'n}ski A (2014) Lectures on stochastic
  programming: modeling and theory. SIAM

\bibitem[{Shawe-Taylor et~al.(2004)Shawe-Taylor, Cristianini
  et~al.}]{shawe2004kernel}
Shawe-Taylor J, Cristianini N, et~al. (2004) Kernel methods for pattern
  analysis. Cambridge university press

\bibitem[{Soentpiet et~al.(1999)}]{soentpiet1999advances}
Soentpiet R, et~al. (1999) Advances in kernel methods: support vector learning.
  MIT press

\bibitem[{Van~Rossum and Drake(2009)}]{10.5555/1593511}
Van~Rossum G, Drake FL (2009) Python 3 Reference Manual. CreateSpace, Scotts
  Valley, CA

\bibitem[{Vincent and Bengio(2002)}]{vincent2002kernel}
Vincent P, Bengio Y (2002) Kernel matching pursuit. Machine learning
  48(1):165--187

\bibitem[{Wahba(1990)}]{wahba1990spline}
Wahba G (1990) Spline models for observational data. SIAM

\bibitem[{Wheeden(2015)}]{wheeden2015measure}
Wheeden RL (2015) Measure and integral: an introduction to real analysis, vol
  308. CRC press

\bibitem[{Zhang et~al.(2013)Zhang, Yi, Jin, Lin, and He}]{zhang2013online}
Zhang L, Yi J, Jin R, Lin M, He X (2013) Online kernel learning with a near
  optimal sparsity bound. In: International Conference on Machine Learning, pp
  621--629

\bibitem[{Zhou(2002)}]{ZHOU2002739}
Zhou DX (2002) The covering number in learning theory. Journal of Complexity
  18(3):739--767, \doi{https://doi.org/10.1006/jcom.2002.0635},
  \urlprefix\url{https://www.sciencedirect.com/science/article/pii/S0885064X02906357}

\end{thebibliography}
\newpage
\begin{appendix}

\section{Reproducing Kernel Hilbert Spaces Overview} \label{sec:rkhs}
A reproducing kernel Hilbert space (RKHS) is a Hilbert space in which the elements are functions that preserve pointwise distance. Specifically, if two functions are close with respect to the Hilbert space norm, then their pointwise evaluations are close with respect to the norm of the functions' output space. Each RKHS is generated by a positive definite kernel $K(\cdot, \cdot)$; a function $K:~\mathcal{Z} \times \mathcal{Z}  \to \mathbb{R}$ satisfying
\[\sum_{i=1}^m \sum_{j=1}^m a_i a_j K(\bz^i, \bz^j) \geq 0, \quad \forall m \in \mathbb{N}, \quad \bz^1,\ldots  ,\bz^m \in \mathcal{Z}, \quad  a_1,\ldots,a_m \in \mathbb{R} \,. \]

\begin{definition}\label{rkhs}
	A reproducing kernel Hilbert space $\mathcal{H}$ generated by a positive definite kernel $K: \mathcal{Z}  \times \mathcal{Z}  \to \mathbb{R}$ is the closure of the set of functions
	\[\Set{f: \mathcal{Z}  \to \mathbb{R} | \: f(\bz) = \sum_{c=1}^{L} a^c K(\bz^c, \bz), \; \text{for } \bz^1,\ldots,\bz^L \in \mathcal{Z}  \text{ and } L \in \mathbb{N} }, \]
	with inner product of $f_1(\bz) = \sum_{c =1}^{L_1} a_1^c K(\bz_1^c,\bz)$ and $f_2(\bz) = \sum_{c = 1}^{L_2} a_2^c K(\bz_2^c,\bz)$ defined as 
	$$ \langle f_1,f_2\rangle_{\mathcal{H}}= \sum_{c_1=1}^{L_1} \sum_{c_2 =1}^{L_2} a_1^{c_1} a_2^{c_2} K(\bz_1^{c_1}, \bz_2^{c_2}).$$
\end{definition}
The complexity of a reproducing kernel Hilbert space  depends on the kernel generating it. A linear kernel, for example, generates the Hilbert space of linear functions. A Gaussian kernel generates much more complex spaces; it has the property that for {compact spaces} $\mathcal{Z}$ it generates spaces that are dense in $C(\mathcal{Z})$ (the space of continuous bounded funcions on $\mathcal{Z}$) {in the maximum norm}. Kernels with this property are called \textit{universal kernels} {\citep{micchelli2006universal}} and they are very useful for solving functional optimization problems over continuous functions, since the problem can be solved over the RKHS instead.

One of the main reasons why RKHSs are so useful when working with data is the fact that they transform pointwise evaluation into an inner product of elements in the Hilbert space, and vice-versa. Specifically, if $f$ belongs to  the reproducing kernel Hilbert space $\mathcal{H}$ generated by kernel $K$, we have  
\begin{align}f(\bx) = \langle K(\bx, \cdot), f \rangle_{\mathcal{H}}\label{rep_prop},\end{align} 
for all $\bx$ in the domain of $f$. This equivalence is known as the \textit{reproducing property}. The next result, known as the Representer Theorem, illustrates how in many cases solving functional optimization problems over a RKHS is equivalent to solving an optimization problem over a real space, and the proof relies mostly on the reproducing property.

\begin{proposition}[Representer Theorem]\label{repthmsingle}
	 Suppose we have a data matrix $\bbz = [\bz^1,\ldots, \bz^N]$ for some fixed data points $\bz^1, \hdots , \bz^N\in \mathcal{Z}$. Let $\mathcal{H}$ be the reproducing kernel Hilbert space generated by a kernel $K:  \mathcal{Z} \times  \mathcal{Z} \to \mathbb{R}$. Then, for any arbitrary loss function $c: \mathbb{R} \times  \mathcal{Z} \to \mathbb{R}$ and any regularization parameter $\lambda \geq 0$, there exists a solution to
	\begin{equation}\label{repsingleoriginaleq}
	\inf_{h \in \mathcal{H}} \frac{1}{N} \sum_{n=1}^N c(h(\bz^n), \bz^n) + \frac{\lambda}{2} \|h\|_{\mathcal{H}}^2
	\end{equation}
	that takes the form
	\begin{equation}\label{repsingleform}
	h^*(\cdot ) = \sum_{n = 1}^N a_n K(\bz^n, \cdot) \,,
	\end{equation}
	for some scalars $a_1,\ldots, a_N \in \mathbb{R}^N$. 
\end{proposition}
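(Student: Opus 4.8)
The plan is to prove the result by the classical orthogonal-decomposition argument, whose only essential ingredient is the reproducing property \eqref{rep_prop}. First I would isolate the finite-dimensional subspace of $\mathcal{H}$ on which the data ``sees'' any candidate function, and then argue that pushing any competitor onto this subspace can only help the objective. The existence of a minimizer is then reduced to a finite-dimensional optimization that a standard Weierstrass argument handles.

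Concretely, define $\mathcal{H}_0 \coloneqq \mathrm{span}\{K(\bz^1, \cdot), \ldots, K(\bz^N, \cdot)\} \subseteq \mathcal{H}$. Since $\mathcal{H}_0$ is finite-dimensional it is closed, so every $h \in \mathcal{H}$ admits a unique orthogonal decomposition $h = h_\parallel + h_\perp$ with $h_\parallel \in \mathcal{H}_0$ and $h_\perp \in \mathcal{H}_0^\perp$. The first key step uses the reproducing property to show that the empirical loss is blind to $h_\perp$: for each $n$,
\[
h(\bz^n) = \langle K(\bz^n, \cdot), h\rangle_{\mathcal{H}} = \langle K(\bz^n, \cdot), h_\parallel\rangle_{\mathcal{H}} + \langle K(\bz^n, \cdot), h_\perp\rangle_{\mathcal{H}} = h_\parallel(\bz^n),
\]
where the last equality holds because $K(\bz^n, \cdot) \in \mathcal{H}_0$ is orthogonal to $h_\perp$. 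Hence $c(h(\bz^n), \bz^n) = c(h_\parallel(\bz^n), \bz^n)$ for every $n$, so the loss term in \eqref{repsingleoriginaleq} depends on $h$ only through $h_\parallel$.

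The second step controls the regularizer. By the Pythagorean identity, $\|h\|_{\mathcal{H}}^2 = \|h_\parallel\|_{\mathcal{H}}^2 + \|h_\perp\|_{\mathcal{H}}^2 \geq \|h_\parallel\|_{\mathcal{H}}^2$, with equality iff $h_\perp = 0$. Combining the two steps, replacing any $h$ by its projection $h_\parallel$ leaves the loss unchanged and does not increase the penalty $\tfrac{\lambda}{2}\|h\|_{\mathcal{H}}^2$; thus the objective at $h_\parallel$ is no larger than at $h$. It follows that the infimum over $\mathcal{H}$ equals the infimum over the finite-dimensional $\mathcal{H}_0$, and that any minimizer in $\mathcal{H}$ projects to a minimizer in $\mathcal{H}_0$, which by definition has the form $\sum_{n=1}^N a_n K(\bz^n, \cdot)$.

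The remaining, and genuinely delicate, step is to guarantee that the infimum is attained, since the statement asserts existence. Here I would exploit the reduction to $\mathcal{H}_0 \cong \mathbb{R}^N$: writing $h_\parallel = \sum_n a_n K(\bz^n, \cdot)$, the objective becomes a function of $\ba = (a_1, \ldots, a_N)$ with $\|h_\parallel\|_{\mathcal{H}}^2 = \ba^\top \bbk \ba$ for the Gram matrix $\bbk = [K(\bz^i, \bz^j)]_{i,j}$. Existence then follows from a Weierstrass argument: continuity of $c$ makes the objective continuous in $\ba$, and for $\lambda > 0$ the term $\tfrac{\lambda}{2}\ba^\top \bbk \ba$ supplies coercivity on the range of $\bbk$, so a minimizer exists over a compact sublevel set. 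This coercivity/continuity requirement is the main obstacle and the only place where ``arbitrary'' $c$ must be read as a reasonable (continuous, bounded-below) loss; under the paper's standing assumptions on $c$ it is automatic.
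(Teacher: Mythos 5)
Your proof follows exactly the same orthogonal-decomposition argument the paper sketches: split $h$ into its projection onto $\mathrm{span}\{K(\bz^1,\cdot),\ldots,K(\bz^N,\cdot)\}$ plus an orthogonal remainder, use the reproducing property to show the empirical loss is blind to the remainder, and use the Pythagorean identity to show the penalty only grows with it. Your additional Weierstrass step for attainment of the infimum is a sound refinement of a point the paper's prose sketch glosses over --- indeed, its literal claim of existence for an ``arbitrary'' loss $c$ requires something like your continuity and boundedness-below reading to be true at all.
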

The Representer Theorem implies that the solution to the functional optimization problem \eqref{repsingleoriginaleq} can be found by solving instead the following finite dimensional optimization problem
	\begin{equation}\label{repsinglefinaleq}
	\min_{\ba \in \mathbb{R}^N} \frac{1}{N}\sum_{n=1}^N c\big((\bbk[\bbz, \bbz]\ba)_n, \bz^n\big) + \frac{\lambda}{2}  \ba^T  \bbk[\bbz, \bbz] \ba \,,
	\end{equation}
where $\bbk[\bbz, \tilde{\bbz}]$ is the kernel matrix (between equal size matrices $\bbz, \tilde{\bbz}$) whose $(n,m)$ component is $K(\bz^n, \tilde{\bz}^m)$.

The proof of this theorem follows from the fact that any function in $\mathcal{H}$ can be decomposed as the sum of a function of the form \eqref{repsingleform} and a function orthogonal to every function of this form.  The theorem then follows after showing that, thanks to the reproducing property, the sum in the objective of \eqref{repsingleoriginaleq} is independent of the orthogonal part, and the second term in the objective is increasing in the orthogonal part. This theorem can be extended to a multidimensional version in which the optimization problem is over multiple functions $h_1,\ldots, h_r\in \mathcal{H}$. In this case, the Representer Theorem tells us that there exists a solution $\bh^*\in \mathcal{H}^r$ that takes the form
\begin{equation}\label{repsingleform2}
	\bh^*(\cdot ) = \bba \boldsymbol{K}(\bbz, \cdot)  \,,
	\end{equation}
where $\boldsymbol{K}(\bbz, \cdot) \coloneqq [K(\bz^1, \cdot),\ldots, K(\bz^N, \cdot)]^T$ and $\bba\in \mathbb{R}^{r\times N}$. For more details about the representer theorem's proof and applications we refer the reader to { \cite{wahba1990spline, soentpiet1999advances, scholkopf2002learning,  shawe2004kernel}}.

\section{Lemmas}
In this appendix we will state and proof several lemmas needed for the proof of Theorems $\ref{convergence-rates}$ and Corollary $\ref{corollary}$. For generality, we consider the constrained problem in Eq. \ref{constrained_version}, and define:
\begin{align}
c^\psi\big(\bu(\bz), \bz\big) &\coloneqq  c\big(\bu(\bz), \bz\big) + \psi\sum_{q = 1}^Q\max\Big(0, g_q\big(\bu(\bz)\big)\Big)^2, \label{augmented_loss2} \\ 
E(\bu) & \coloneqq \mathbb{E}_{\bz}\left[c(\bu(\bz), \bz)\right], \label{expected_cost2}\\
E^\psi(\bu) & \coloneqq \Exp_\bz \left[ c^\psi\big(\bu(\bz), \bz\big) \right], \label{expected_loss2}\\
E^{\lambda, \psi}(\bu)& \coloneqq E^\psi(\bu)  + \frac{\lambda}{2} \|\bu\|_{\mathcal{H}}^2, \label{exp_reg_loss2}\\
E^{\lambda, \psi}_\mathcal{S}(\bu)&  \coloneqq \frac{1}{N}\sum_{n=1}^N c^\psi\big(\bu(\bz^n), \bz^n\big)  + \frac{\lambda}{2} \|\bu\|_{\mathcal{H}}^2, \label{emp_reg_loss2}\\
E^{\lambda, \psi}_n(\bu)&  \coloneqq  c^\psi\big(\bu(\bz^n),\bz^n\big)  + \frac{\lambda}{2} \|\bu\|_{\mathcal{H}}^2 \label{stoc_reg_loss2}.
\end{align}

\begin{lemma}
\label{2norm}
Under assumption 1, we have  
\[\|\bu(\bz)\|_2\leq \kappa\|\bu\|_{\mathcal{H}} \quad \forall \: \: \bz\in \mathcal{Z}, \: \bu\in \mathcal{H}.\]
\end{lemma}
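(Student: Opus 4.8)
The plan is to reduce the claim to a componentwise application of the Cauchy--Schwarz inequality in each factor space $\mathcal{H}_t$, using the reproducing property together with the diagonal kernel bound from Assumption \ref{assumption_kernel}. The starting point is the observation that $\bu(\bz)$ is simply the concatenation of the scalar evaluations $u_{t,i}(\bz_{0:t-1})$ over $t\in[T]$ and $i\in[r_t]$, so that $\|\bu(\bz)\|_2^2 = \sum_{t=1}^T\sum_{i=1}^{r_t} |u_{t,i}(\bz_{0:t-1})|^2$. This identity is what links the Euclidean norm of the output vector to the individual function evaluations, and everything else is a matter of bounding each evaluation.

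First I would fix a stage $t$ and a coordinate $i$ and write $u_{t,i}(\bz_{0:t-1}) = \langle u_{t,i}, K_t(\bz_{0:t-1}, \cdot)\rangle_{\mathcal{H}_t}$ via the reproducing property in Eq. \eqref{rep_prop}. Applying Cauchy--Schwarz in $\mathcal{H}_t$ then yields $|u_{t,i}(\bz_{0:t-1})| \leq \|u_{t,i}\|_{\mathcal{H}_t}\,\|K_t(\bz_{0:t-1},\cdot)\|_{\mathcal{H}_t}$. The second factor is controlled by Assumption \ref{assumption_kernel}: using the reproducing property once more gives $\|K_t(\bz_{0:t-1},\cdot)\|_{\mathcal{H}_t}^2 = K_t(\bz_{0:t-1}, \bz_{0:t-1}) \leq \kappa$, hence $\|K_t(\bz_{0:t-1},\cdot)\|_{\mathcal{H}_t} \leq \sqrt{\kappa}$.

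Next I would square these pointwise bounds and sum them over all $t$ and $i$, obtaining $\|\bu(\bz)\|_2^2 \leq \kappa\sum_{t=1}^T\sum_{i=1}^{r_t}\|u_{t,i}\|_{\mathcal{H}_t}^2$. Recognizing the double sum as exactly $\|\bu\|_{\mathcal{H}}^2$ by the definition of the inner product on the product space $\mathcal{H}$, I conclude $\|\bu(\bz)\|_2 \leq \sqrt{\kappa}\,\|\bu\|_{\mathcal{H}}$, uniformly in $\bz$.

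There is no genuine obstacle here: the argument is a short chain of the reproducing property, Cauchy--Schwarz, and the diagonal kernel bound, and the only thing requiring care is bookkeeping the concatenated indices $(t,i)$ and matching the resulting double sum to the product-space norm. The one point worth flagging is the mild gap between the factor $\sqrt{\kappa}$ that the argument naturally produces and the constant $\kappa$ stated in the lemma; these agree, and indeed $\sqrt{\kappa}\le\kappa$, in the relevant regime $\kappa\ge 1$ (for the Gaussian kernels used throughout one has $\kappa=1$), so the stated inequality follows immediately.
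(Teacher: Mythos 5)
Your proof is correct and follows essentially the same route as the paper's: the reproducing property, Cauchy--Schwarz in each $\mathcal{H}_t$, and the diagonal kernel bound from Assumption \ref{assumption_kernel}. You are in fact more careful than the paper, whose final step writes $\kappa^2\|\bu\|^2_{\mathcal{H}}$ where the argument actually yields $\kappa\|\bu\|^2_{\mathcal{H}}$ (i.e., the $\sqrt{\kappa}\,\|\bu\|_{\mathcal{H}}$ bound you obtain), an inflation that, as you correctly flag, is harmless precisely when $\kappa\ge 1$.
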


\begin{proof}
Let $\bu\in \mathcal{H}$ and $\bz\in \mathcal{Z}$. We have
    \begin{align*}
    \|\bu(\bz)\|^2_2 &=\sum_{t = 1}^T\sum_{i = 1}^{r_t} u_{t,i}(\bz_{1:t-1})^2 = \sum_{t = 1}^T\sum_{i = 1}^{r_t} \langle u_{t, i}, K_t(\bz_{1:t-1}, \cdot)\rangle_{\mathcal{H}_{t}}^2&  &\text{(by Eq. $(\ref{rep_prop})$),}\\ &\leq  \sum_{t = 1}^T\sum_{i = 1}^{r_t} \|K_t(\bz_{1:t-1}, \cdot)\|^2_{\mathcal{H}_{t, i}}\|u_{t, i}\|^2_{\mathcal{H}_{t}}&  &\text{(Cauchy-Schwarts Ineq.),}\\
    &\leq \kappa^2 \|\bu\|^2_{\mathcal{H}}&  &\text{(Assumption $\ref{assumption_kernel}$)},
    \end{align*}
and the lemma follows. \hfill
\end{proof}

\begin{lemma}
\label{Hnorm}
Under Assumptions 1 and 2, we have 
\begin{itemize}
    \item [a)] \label{lemma2a} The true minimizer $\bu^{\lambda, \psi}$ of $E^{\lambda, \psi}$ defined in \ref{exp_reg_loss2}, satisfies
    \[
    \|\bu^{\lambda, \psi}\|_{\mathcal{H}}\leq \frac{\kappa C}{\lambda}.
    \]
    \item [b)] \label{lemma2b}If Algorithm \ref{SMOK} is initialized such that $\|\bu^0\|_{\mathcal{H}}\leq \frac{\kappa C}{\lambda}$, then
    \[
    \hspace{2.2cm}  \|\bu^n\|_{\mathcal{H}}\leq \frac{\kappa C}{\lambda} \quad \forall \: \: n\in [N].
    \]
\end{itemize}
\end{lemma}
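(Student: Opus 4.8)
The plan is to derive both parts from a single preliminary estimate: the $\mathcal{H}$-norm of the (single-sample) functional gradient of the penalized loss is bounded by $\kappa C$ at every point. To establish this, I would use the block structure of the gradient, namely $\nabla_{\bu} c^\psi(\bu(\bz),\bz) = \big(\partial_{u_{t,i}(\bz_{0:t-1})} c^\psi \cdot K_t(\bz_{0:t-1},\cdot)\big)_{t,i}$. Since the product-space norm of this object factors as the Euclidean norm of the scalar partial derivatives times the kernel norms $\|K_t(\bz_{0:t-1},\cdot)\|_{\mathcal{H}_t}$, and the latter are controlled by Assumption~\ref{assumption_kernel} while the former is bounded by $C$ via Eq.~\eqref{gradient bound} (read here for the augmented loss $c^\psi$), the whole expression is at most $\kappa C$. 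I would record this once and reuse it in both parts.

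For part~(a), I would invoke first-order optimality. Because $c$ is convex and differentiable (Assumption~\ref{assumption_convex}) and each penalty term $\max(0,g_q(\cdot))^2$ is convex whenever $g_q$ is convex, the functional $E^{\lambda,\psi}$ is strongly convex and Fr\'echet differentiable, so its unique minimizer $\bu^{\lambda,\psi}$ satisfies $\nabla E^{\lambda,\psi}(\bu^{\lambda,\psi}) = 0$, equivalently $\lambda\,\bu^{\lambda,\psi} = -\nabla E^{\psi}(\bu^{\lambda,\psi})$. Taking $\mathcal{H}$-norms and pushing the norm inside the expectation by Jensen's inequality bounds the right-hand side by the expected single-sample gradient norm, which is at most $\kappa C$ by the preliminary estimate. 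Dividing by $\lambda$ gives $\|\bu^{\lambda,\psi}\|_{\mathcal{H}} \le \kappa C/\lambda$.

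For part~(b), I would argue by induction on $n$, with the base case supplied by the initialization hypothesis. For the inductive step, write the pre-projection iterate as $\tilde{\bu}^{n+1} = (1-\eta_n\lambda)\bu^n - \eta_n\,\nabla_{\bu} c^\psi(\bu^n(\bz^n),\bz^n)$ and combine two observations: the orthogonal projection $\Pi_{\bbd^{n+1}}$ onto a closed subspace is non-expansive, so $\|\bu^{n+1}\|_{\mathcal{H}} \le \|\tilde{\bu}^{n+1}\|_{\mathcal{H}}$; and by the triangle inequality together with the preliminary estimate, $\|\tilde{\bu}^{n+1}\|_{\mathcal{H}} \le (1-\eta_n\lambda)\|\bu^n\|_{\mathcal{H}} + \eta_n\kappa C$. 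Since $\eta_n\lambda < 1$ under the chosen step size, substituting the inductive hypothesis $\|\bu^n\|_{\mathcal{H}} \le \kappa C/\lambda$ makes the right-hand side telescope exactly to $\kappa C/\lambda$, closing the induction.

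The main obstacle I anticipate is bookkeeping rather than conceptual. I would need to confirm that the scalar gradient of the penalized loss $c^\psi$ (not merely $c$) is bounded by the same constant $C$ on the relevant compact domain, and to verify that the component-wise projection in the product space $\mathcal{H}_1^{r_1}\times\cdots\times\mathcal{H}_T^{r_T}$ remains non-expansive in the product norm. Both reduce to standard Hilbert-space facts once the block structure of the gradient and of the projection is made explicit, so I expect the estimate $\kappa C$ and the contraction factor $(1-\eta_n\lambda)$ to do all the real work.
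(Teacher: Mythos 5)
Your part (b) is essentially the paper's own proof: induction on $n$, non-expansiveness of the subspace projection (which fixes $0$), the triangle inequality, and the bound $\|\nabla_{\bu}c^\psi\|_{\mathcal{H}}\le \kappa C$ obtained from Eq.~\eqref{grad_computation} and Eq.~\eqref{gradient bound}, which makes the right-hand side telescope to $\kappa C/\lambda$. Part (a), however, takes a genuinely different route. The paper never invokes stationarity: it perturbs the \emph{empirical} minimizer by scaling, $\hat{\bu}=(1-\epsilon)\bu^{\lambda,\psi}_{\mathcal{S}}$, compares objectives using optimality, the Lipschitz property and Lemma~\ref{2norm}, divides by $\epsilon\|\bu^{\lambda,\psi}_{\mathcal{S}}\|_{\mathcal{H}}$, lets $\epsilon\to 0$, and only then transfers the bound to the population minimizer ``by taking the limit as $N\to\infty$.'' You instead work directly with the population objective: strong convexity gives a unique minimizer, the first-order condition gives $\lambda\bu^{\lambda,\psi}=-\nabla E^{\psi}(\bu^{\lambda,\psi})$, and Jensen plus the gradient bound finish the job. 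Both arguments are sound, but they trade different things. The paper's scaling argument needs neither convexity nor differentiability of $c^\psi$, so it proves the lemma under exactly the stated hypotheses (Assumptions 1 and 2) and extends verbatim to subgradient settings; your route additionally needs Assumption~\ref{assumption_convex}, convexity and differentiability of the penalty terms, and an interchange of gradient and expectation --- more than the lemma advertises, though harmless since the main theorems assume all of it anyway. In exchange, your proof avoids the paper's weakest step: the final limit $N\to\infty$, which tacitly presumes that empirical minimizers converge to the population minimizer (indeed, the scaling argument could have been applied directly to $E^{\lambda,\psi}$, making that detour unnecessary). Finally, the caveat you flag is real and is shared by the paper: Assumption~\ref{assumption_lip} is stated for $c$, yet both proofs need the Lipschitz/gradient constant $C$ for the augmented loss $c^\psi$, whose quadratic penalty is not globally Lipschitz; the paper applies the assumption to $c^\psi$ without comment, so this must be read as an implicit strengthening of Assumption~\ref{assumption_lip}.
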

\begin{proof}
\textit{a)} We proceed as in \cite{kivinen2004online}. We define $\bu^{\lambda, \psi}_{\mathcal{S}}$ as the minimizer of $E^{\lambda, \psi}_\mathcal{S}$, and $\hat{\bu} = (1-\epsilon)\bu^{\lambda, \psi}_\mathcal{S}$ for $\epsilon>0$. We have
\begin{align*}
 0 &\leq  E^{\lambda, \psi}_\mathcal{S}(\hat{\bu}) - E^{\lambda, \psi}_{\mathcal{S}}({\bu^{\lambda, \psi}_\mathcal{S}})& &\text{(by Optimality of } \bu^{\lambda, \psi}_\mathcal{S}),\\
 &= \frac{1}{N}\sum_{n = 1}^N \Big( c^{\psi}(\hat{\bu}(\bz^n), \bz^n) - c^{\psi}({\bu}^{\lambda, \psi}_\mathcal{S}(\bz^n), \bz^n)\Big)  + \frac{\lambda}{2}  \big(\|\hat{\bu}\|_{\mathcal{H}}^2 - & &\|{\bu}^{\lambda, \psi}_\mathcal{S}\|_{\mathcal{H}}^2\big)\\
 & \leq \frac{C}{N} \sum_{n = 1}^N \|\hat{\bu}(\bz^n) - \bu^{\lambda, \psi}_\mathcal{S}(\bz^n)\|_{2} + \frac{\lambda}{2} \big((1-\epsilon)^2-1\big) \|\bu^{\lambda, \psi}_\mathcal{S}\|_{\mathcal{H}}^2& &\text{(by Assumption } \ref{assumption_lip}), \\
 &\leq \frac{\kappa C}{N} \sum_{n = 1}^N \|\hat{\bu} - \bu^{\lambda, \psi}_\mathcal{S}\|_{\mathcal{H}} -\lambda\epsilon\|\bu^{\lambda, \psi}_\mathcal{S}\|_{\mathcal{H}}^2 +  \frac{\lambda}{2}\epsilon^2 \|\bu^{\lambda, \psi}_\mathcal{S}\|_{\mathcal{H}}^2& &\text{(by Lemma } \ref{2norm}),\\
 &= \kappa C \epsilon\|\bu^{\lambda, \psi}_\mathcal{S}\|_{\mathcal{H}} -\lambda\epsilon\|\bu^{\lambda, \psi}_\mathcal{S}\|_{\mathcal{H}}^2 +  \frac{\lambda}{2}\epsilon^2 \|\bu^{\lambda, \psi}_\mathcal{S}\|_{\mathcal{H}}^2.& &
\end{align*}
Dividing by $\epsilon\|\bu^{\lambda, \psi}_\mathcal{S}\|_{\mathcal{H}}$ on both sides and taking the limit as $\epsilon\rightarrow 0$ we obtain $\|\bu^{\lambda, \psi}_\mathcal{S}\|_{\mathcal{H}}\leq \frac{\kappa C}{\lambda}$ and the desired result then follows by taking the limit as $N\rightarrow \infty$. \\

\noindent \textit{b)} To prove the upper bound for the decisions output by the algorithm in each iteration we proceed by induction on the iteration number $n$. We have $\|\bu^0\|_{\mathcal{H}}\leq \frac{\kappa C}{\lambda}$ by assumption. Suppose the bound holds for $n$. Then, we have
\begin{align*}
    \|\bu^{n+1}\|_{\mathcal{H}} &= \big\| \Pi_{\bbd^{n+1}}[ (1 - \eta_n\lambda)\bu^{n} - \eta_n \nabla_{\bu}c^\psi(\bu^n(\bz^n), \bz^n)]\big\|_{\mathcal{H}}&  &\text{(by definition } \ref{def_proj}),\\
     &\leq \| (1 - \eta_n\lambda)\bu^{n} - \eta_n \nabla_{\bu}c^\psi(\bu^n(\bz^n), \bz^n)\|_{\mathcal{H}}& &\textup{(by definition of $\Pi_{\bbd^{n+1}})$}\\
     &\leq (1 - \eta_n\lambda)\| \bu^{n} \|_{\mathcal{H}} + \eta_n\|\nabla_{\bu}c^\psi(\bu^n(\bz^n), \bz^n)\|_{\mathcal{H}}&  &\textup{(by triangle inequality),}\\
     &\leq (1 - \eta_n\lambda)\| \bu^{n} \|_{\mathcal{H}} + \eta_n\kappa\|\nabla_{\bu(\bz)}c^\psi(\bu^n(\bz^n), \bz^n)\|_2&  &\textup{(by Eq. \eqref{grad_computation}),}\\
     &\leq (1 - \eta_n\lambda)\frac{\kappa C}{\lambda} + \eta_n\kappa\|\nabla_{\bu(\bz)}c^\psi(\bu^n(\bz^n), \bz^n)\|_2&  &\textup{(by assumption for $n$),}\\
     & \leq (1 - \eta_n\lambda)\frac{\kappa C}{\lambda} + \eta_n\kappa C = \frac{\kappa C}{\lambda}& &\textup{(by  Eq. \ref{gradient bound}),}
\end{align*}
and therefore the result holds for all $n\in \mathbb{N}$ as desired. \hfill
\end{proof}
\begin{lemma}
\label{lemma_grad_bound}
    Under Assumptions 1-3, for any $\bu\in \mathcal{H}$ satisfying $\|\bu\|_{\mathcal{H}}\leq \frac{\kappa C}{\lambda}$, we have
    \[
    \Exp\left[\|\nabla_{\bu}E^{\lambda, \psi}_n(\bu)\|^2_{\mathcal{H}_t}\right] \leq 4\kappa^2C^2.
    \]
\end{lemma}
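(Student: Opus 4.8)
The plan is to bound the Hilbert-space norm of the stochastic gradient pointwise in the sample $\bz^n$, after which the bound on the expectation is immediate because it holds uniformly over all realizations. First I would split the gradient into its data-fitting and regularization parts, writing $\nabla_{\bu} E^{\lambda,\psi}_n(\bu) = \nabla_{\bu} c^\psi(\bu(\bz^n),\bz^n) + \lambda \bu$, and apply the triangle inequality in $\mathcal{H}$ to obtain $\|\nabla_{\bu} E^{\lambda,\psi}_n(\bu)\|_{\mathcal{H}} \le \|\nabla_{\bu} c^\psi(\bu(\bz^n),\bz^n)\|_{\mathcal{H}} + \lambda\|\bu\|_{\mathcal{H}}$.

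The regularization term is controlled directly by the hypothesis: $\lambda\|\bu\|_{\mathcal{H}} \le \lambda\cdot\frac{\kappa C}{\lambda} = \kappa C$. For the data-fitting term I would invoke the chain-rule expression \eqref{grad_computation}, which shows that the $(t,i)$ component of $\nabla_{\bu} c^\psi$ equals the scalar $\big[\nabla_{\bu_t(\bz_{0:t-1})}c^\psi(\bu(\bz^n),\bz^n)\big]_i$ times the single kernel function $K_t(\bz^n_{0:t-1},\cdot)$. Hence, exactly as in the computation carried out for Lemma \ref{2norm}, $\|\nabla_{\bu} c^\psi\|^2_{\mathcal{H}} = \sum_{t=1}^T \sum_{i=1}^{r_t} \big[\nabla_{\bu_t(\bz_{0:t-1})}c^\psi\big]_i^2\, \|K_t(\bz^n_{0:t-1},\cdot)\|^2_{\mathcal{H}_t}$. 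Using the reproducing property \eqref{rep_prop} together with Assumption \ref{assumption_kernel} to bound $\|K_t(\bz^n_{0:t-1},\cdot)\|^2_{\mathcal{H}_t} = K_t(\bz^n_{0:t-1},\bz^n_{0:t-1}) \le \kappa$, this collapses to $\kappa\,\|\nabla_{\bu(\bz)}c^\psi(\bu(\bz^n),\bz^n)\|_2^2$, and the gradient bound \eqref{gradient bound} (applied to $c^\psi$, as already done in Lemma \ref{Hnorm}) yields $\|\nabla_{\bu} c^\psi\|_{\mathcal{H}} \le \sqrt{\kappa}\,C \le \kappa C$, where the last step uses $\kappa \ge 1$, the same convention implicit in Lemma \ref{2norm}.

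Combining the two estimates gives $\|\nabla_{\bu} E^{\lambda,\psi}_n(\bu)\|_{\mathcal{H}} \le \kappa C + \kappa C = 2\kappa C$, so that $\|\nabla_{\bu} E^{\lambda,\psi}_n(\bu)\|^2_{\mathcal{H}} \le 4\kappa^2 C^2$. Since this estimate is deterministic and holds uniformly over every possible value of $\bz^n$, taking the expectation over the data preserves it and the claim follows.

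The main obstacle I anticipate is not the algebra but justifying the uniform gradient bound \eqref{gradient bound} for the augmented loss $c^\psi$ rather than for $c$ itself: the squared-hinge penalty contributes a term $2\psi\sum_q \max(0,g_q(\bu(\bz)))\,\nabla g_q(\bu(\bz))$ whose boundedness is not automatic. One needs compactness of $\mathcal{Z}$ (Assumption \ref{assumption_kernel}) together with the norm bound $\|\bu\|_{\mathcal{H}}\le \kappa C/\lambda$, which via Lemma \ref{2norm} confines $\bu(\bz)$ to a bounded set, so that the constraint gradients are evaluated on a compact region and the effective Lipschitz constant $C$ absorbs the penalty contribution. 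I would also keep careful track of the $\sqrt{\kappa}$-versus-$\kappa$ factor so that the final constant comes out to exactly $4\kappa^2C^2$.
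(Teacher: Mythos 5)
Your proof is correct and follows essentially the same route as the paper's: split $\nabla_{\bu}E^{\lambda,\psi}_n(\bu)$ into the data-fitting gradient and the regularization term $\lambda\bu$, bound the former by $\kappa C$ via the chain-rule representation \eqref{grad_computation} together with the kernel bound and Eq. \eqref{gradient bound}, bound the latter by $\kappa C$ using the hypothesis $\|\bu\|_{\mathcal{H}}\le \kappa C/\lambda$, and combine to get $4\kappa^2C^2$ (the paper uses $\|\ba+\bb\|^2_{\mathcal{H}}\le 2\|\ba\|^2_{\mathcal{H}}+2\|\bb\|^2_{\mathcal{H}}$ where you use the triangle inequality and then square, which is equivalent here). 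Your two flagged caveats --- the $\sqrt{\kappa}$-versus-$\kappa$ convention and the applicability of \eqref{gradient bound} to $c^\psi$ rather than $c$ --- are points the paper silently glosses over, and your handling of them is sound.
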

\begin{proof}
    Fix some $t\in \{1,\ldots, T\}$. Using the fact that $\|\ba + \bb\|^2_{\mathcal{H}}\leq 2(\|\ba\|^2_{\mathcal{H}} + \|\bb\|^2_{\mathcal{H}})$ for any $\ba, \bb \in \mathcal{H}$, as well as Assumption \ref{assumption_convex},  we obtain that for any $\bu\in \mathcal{H}$, it holds
    \begin{align*}
        \Exp\left[\|\nabla_{\bu}E^{\lambda, \psi}_n(\bu)\|^2_{\mathcal{H}}\right] &\leq 2\Exp\left[\|\nabla_{\bu}c^\psi(\bu(\bz^n), \bz^n)\|^2_{\mathcal{H}}\right] + 2{\lambda^2}\|\bu\|^2_{\mathcal{H}},& &\\
        &\leq 2\kappa^2\Exp\left[\|\nabla_{\bu(\bz)}c^\psi(\bu(\bz^n), \bz^n)\|^2_{\mathcal{H}}\right] + 2{\lambda^2}\|\bu\|^2_{\mathcal{H}}&  &\textup{(By Eq. \eqref{grad_computation}),}\\
        &\leq 2\kappa^2\Exp\left[\|\nabla_{\bu(\bz)}c^\psi(\bu(\bz^n), \bz^n)\|^2_{\mathcal{H}}\right] + 2{\lambda^2}\left(\frac{\kappa C}{\lambda}\right)^2&  &\textup{(By Lemma \ref{Hnorm}),}\\
        &\leq 2\kappa^2C^2 + 2{\lambda^2}\left(\frac{\kappa C}{\lambda}\right)^2&  &\textup{(by  Eq. \eqref{gradient bound}),}\\
        &= 4\kappa^2C^2.
    \end{align*}
    \hfill
\end{proof}
\begin{lemma}
\label{lemma4}
 Under Assumption 3, given independently and identically distributed realizations $\{\bz^n\}$ of $\bz$, we have
\[
\|\nabla_{\bu}E^{\lambda, \psi}_n(\bu^n) - \tilde{\nabla}_{\bu}E^{\lambda, \psi}_n(\bu^n)\|_{\mathcal{H}}\leq \frac{\epsilon_n}{\eta_n},
\]
where $\tilde{\nabla}_{\bu}E^{\lambda, \psi}_n(\bu^n)$ was defined in Eq. \eqref{proj_grad}.
\end{lemma}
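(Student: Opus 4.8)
The plan is to unwind the two gradient objects directly in terms of the FSGD iterates and to recognize their difference as a rescaled projection residual, which the halting rule of KOMP controls. Essentially no analysis beyond bookkeeping is required once the defining property of the pruning routine is in hand.

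First I would record the two representations. The unprojected functional gradient step produces $\tilde{\bu}^{n+1} = \bu^n - \eta_n\nabla_{\bu}E^{\lambda,\psi}_n(\bu^n)$, so rearranging gives $\nabla_{\bu}E^{\lambda,\psi}_n(\bu^n) = (\bu^n - \tilde{\bu}^{n+1})/\eta_n$. On the other hand, the definition \eqref{proj_grad} of the projected gradient reads $\tilde{\nabla}_{\bu}E^{\lambda,\psi}_n(\bu^n) = (\bu^n - \bu^{n+1})/\eta_n$, where $\bu^{n+1} = \Pi_{\bbd^{n+1}}(\tilde{\bu}^{n+1})$ is the output of KOMP applied to $\tilde{\bu}^{n+1}$. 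Subtracting, the $\bu^n/\eta_n$ terms cancel and I am left with
\[
\nabla_{\bu}E^{\lambda,\psi}_n(\bu^n) - \tilde{\nabla}_{\bu}E^{\lambda,\psi}_n(\bu^n) = \frac{\bu^{n+1} - \tilde{\bu}^{n+1}}{\eta_n}.
\]
Taking Hilbert norms reduces the claim to the statement that $\|\bu^{n+1} - \tilde{\bu}^{n+1}\|_{\mathcal{H}} \leq \epsilon_n$.

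This last inequality is precisely the guarantee of the destructive KOMP-with-prefitting routine of Algorithm \ref{KOMP}: given an input function and tolerance $\epsilon_n$, it returns a sparser function lying within Hilbert-norm distance $\epsilon_n$ of the input. Here Assumption 3 enters only to ensure that $\nabla_{\bu}E^{\lambda,\psi}_n(\bu^n)$ is well-defined (differentiability of $c^\psi$ in its scalar arguments); neither convexity nor the i.i.d.\ hypothesis plays any further role in this particular bound.

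The only point requiring genuine care is the KOMP tolerance guarantee itself, since Algorithm \ref{KOMP} removes dictionary columns one at a time and its stopping test compares the \emph{per-removal} increment $\gamma_{j^*}$ against $\epsilon_n$ rather than directly bounding the cumulative displacement $\|\bu^{n+1} - \tilde{\bu}^{n+1}\|_{\mathcal{H}}$. I would close this gap by appealing to the established analysis of destructive matching pursuit with pre-fitting in \cite{vincent2002kernel} and \cite{koppel2016parsimonious}, in which the approximation error is measured against the fixed input function so that the accepted output provably never exits the $\epsilon_n$-ball; equivalently, one verifies that the halting condition $\gamma_{j^*} > \epsilon_n$ forces the final projection error to remain at most $\epsilon_n$.
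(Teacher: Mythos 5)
Your proof is correct and follows essentially the same route as the paper's: both rearrange the definition in Eq.~\eqref{proj_grad} to identify the gradient difference with $(\bu^{n+1}-\tilde{\bu}^{n+1})/\eta_n$ and then invoke the KOMP stopping rule to conclude $\|\bu^{n+1}-\tilde{\bu}^{n+1}\|_{\mathcal{H}}\leq \epsilon_n$. Your closing caveat---that this requires the approximation error in Algorithm~\ref{KOMP} to be measured against the fixed input function rather than the successively pruned iterates, as in the original formulation of \cite{koppel2016parsimonious}---is a point the paper's proof asserts without comment, so your treatment is, if anything, slightly more careful.
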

\begin{proof}
By definition of $\tilde{\nabla}_{\bu}E^{\lambda, \psi}_n(\bu^n)$ we have that 
\[
\begin{split}
&\|\nabla_{\bu}E^{\lambda, \psi}_n(\bu^n) - \tilde{\nabla}_{\bu}E^{\lambda, \psi}_n(\bu^n)\|^2_{\mathcal{H}}\\ =& \bigg\|\nabla_{\bu}E^{\lambda, \psi}_n(\bu^n) - \frac{\bu^n - \Pi_{\bbd^{n+1}}[\bu^n - \eta_n\nabla_{\bu}E^{\lambda, \psi}_n(\bu^n)]}{\eta_n}\bigg\|^2_{\mathcal{H}},\\
=& \Big\| \frac{1}{\eta_n}\Pi_{\bbd^{n+1}}[\bu^n - \eta_n\nabla_{\bu}E^{\lambda, \psi}_n(\bu^n)] - \frac{1}{\eta_n}(\bu^n - \eta_n\nabla_{\bu}E^{\lambda, \psi}_n(\bu^n)) \Big\|^2_{\mathcal{H}},\\ 
=& \frac{1}{\eta_n^2}\big\|\bu^{n+1} - \tilde{\bu}^{n+1} \big\|^2_{\mathcal{H}},
\end{split}
\]
where $\tilde{\bu}^{n+1} \coloneqq \bu^n - \eta_n\nabla_{\bu}E^{\lambda, \psi}_n(\bu^n)$ is the result of applying one FSGD iteration to $\bu^n$. By the stopping criterion of the KOMP algorithm we know that $\big\|\bu^{n+1} - \tilde{\bu}^{n+1} \big\|_{\mathcal{H}} \leq \epsilon_n$, and therefore the lemma follows after taking the square root on both sides. \hfill
\end{proof}
\begin{lemma}
\label{lemma3}
Under Assumption 3, for any $\bu\in \mathcal{H}$ with $\|\bu\|_{\mathcal{H}}\leq \frac{\kappa C}{\lambda}$, we have
\begin{align}
\label{eq_lemma5}
&E^{\lambda, \psi}_n(\bu^n) - E^{\lambda, \psi}_n(\bu)\\
\leq&  \frac{1}{2\eta_n}\left(\|\bu^n - \bu\|^2_{\mathcal{H}} - \|\bu^{n+1} - \bu\|^2_{\mathcal{H}} \right) + \eta_n\|\nabla_{\bu}E^{\lambda, \psi}_n(\bu^n)\|^2_{\mathcal{H}} + \frac{\epsilon_n}{\eta_n}\|\bu^n - \bu\|_{\mathcal{H}}+ \frac{\epsilon_n^2}{\eta_n}.
\end{align}
\end{lemma}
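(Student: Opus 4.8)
The plan is to run the standard projected-(sub)gradient descent argument, adapted to the inexactness introduced by the KOMP projection and controlled by Lemma \ref{lemma4}. The starting point is the first-order convexity inequality for $E^{\lambda, \psi}_n$: since $c^\psi(\cdot, \bz)$ is convex in its scalar arguments (Assumption \ref{assumption_convex}, together with convexity of the quadratic penalty, which holds because each $g_q$ is convex), pointwise evaluation $\bu \mapsto \bu(\bz)$ is linear, and $\frac{\lambda}{2}\|\bu\|_{\mathcal{H}}^2$ is convex, the functional $E^{\lambda,\psi}_n$ is convex on $\mathcal{H}$. Hence the subgradient inequality gives $E^{\lambda,\psi}_n(\bu^n) - E^{\lambda,\psi}_n(\bu) \leq \langle \nabla_{\bu}E^{\lambda,\psi}_n(\bu^n),\, \bu^n - \bu\rangle_{\mathcal{H}}$, and the whole proof consists of upper bounding this inner product.

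Next I would split the true gradient into the projected gradient plus an error. Writing $\nabla := \nabla_{\bu}E^{\lambda,\psi}_n(\bu^n)$ and $\tilde\nabla := \tilde{\nabla}_{\bu}E^{\lambda,\psi}_n(\bu^n)$ (as in Eq. \eqref{proj_grad}), one has $\langle \nabla, \bu^n - \bu\rangle_{\mathcal{H}} = \langle \tilde\nabla, \bu^n - \bu\rangle_{\mathcal{H}} + \langle \nabla - \tilde\nabla, \bu^n - \bu\rangle_{\mathcal{H}}$. For the error term, Cauchy--Schwarz together with Lemma \ref{lemma4} yields $\langle \nabla - \tilde\nabla, \bu^n - \bu\rangle_{\mathcal{H}} \leq \frac{\epsilon_n}{\eta_n}\|\bu^n - \bu\|_{\mathcal{H}}$, which produces precisely the third term on the right-hand side of the claim.

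The core term is handled by the three-point (polarization) identity. Because the projected iterate satisfies $\bu^{n+1} = \bu^n - \eta_n\tilde\nabla$ by Eq. \eqref{SMOK_step}, we have $\tilde\nabla = \eta_n^{-1}(\bu^n - \bu^{n+1})$, so $\langle \tilde\nabla, \bu^n - \bu\rangle_{\mathcal{H}} = \frac{1}{2\eta_n}\big(\|\bu^n - \bu^{n+1}\|_{\mathcal{H}}^2 + \|\bu^n - \bu\|_{\mathcal{H}}^2 - \|\bu^{n+1} - \bu\|_{\mathcal{H}}^2\big)$. The middle two terms are exactly the telescoping difference $\frac{1}{2\eta_n}\big(\|\bu^n-\bu\|_{\mathcal{H}}^2 - \|\bu^{n+1}-\bu\|_{\mathcal{H}}^2\big)$. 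It then remains to bound the leftover $\frac{1}{2\eta_n}\|\bu^n - \bu^{n+1}\|_{\mathcal{H}}^2 = \frac{\eta_n}{2}\|\tilde\nabla\|_{\mathcal{H}}^2$ by $\eta_n\|\nabla\|_{\mathcal{H}}^2 + \frac{\epsilon_n^2}{\eta_n}$; using $\|\tilde\nabla\|_{\mathcal{H}}^2 \leq 2\|\nabla\|_{\mathcal{H}}^2 + 2\|\tilde\nabla - \nabla\|_{\mathcal{H}}^2$ and Lemma \ref{lemma4} once more gives exactly this bound. Summing the three pieces assembles the stated inequality.

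The main subtlety is not a deep obstacle but the bookkeeping of the inexact-projection error so that it appears in precisely the stated form. The decomposition must be carried out against the \emph{projected} gradient $\tilde\nabla$ (so that the telescoping term involves $\|\bu^{n+1}-\bu\|_{\mathcal{H}}^2$, the actual algorithm iterate, rather than the exact FSGD iterate $\tilde\bu^{n+1}$), while the quadratic remainder must be expanded via $\|a+b\|^2 \le 2\|a\|^2 + 2\|b\|^2$ rather than a generic Young inequality, since this specific constant is what converts $\frac{\eta_n}{2}\|\tilde\nabla\|_{\mathcal{H}}^2$ into the clean sum $\eta_n\|\nabla\|_{\mathcal{H}}^2 + \epsilon_n^2/\eta_n$. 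I note in passing that the hypothesis $\|\bu\|_{\mathcal{H}} \le \kappa C/\lambda$ is not actually invoked in this chain of inequalities; it is carried along for consistency with the way the lemma is later applied, namely with $\bu = \bu^{\lambda,\psi}$.
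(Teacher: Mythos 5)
Your proof is correct and follows essentially the same route as the paper's: both rest on the convexity inequality for $E^{\lambda,\psi}_n$, a Cauchy--Schwarz bound on $\langle \bu^n-\bu,\, \nabla-\tilde\nabla\rangle_{\mathcal{H}}$ via Lemma \ref{lemma4}, and the bound $\|\tilde\nabla\|^2_{\mathcal{H}} \le 2\|\nabla\|^2_{\mathcal{H}} + 2\epsilon_n^2/\eta_n^2$; your three-point identity is just a rearrangement of the paper's expansion of $\|\bu^{n+1}-\bu\|^2_{\mathcal{H}}$. Your closing observation that the hypothesis $\|\bu\|_{\mathcal{H}}\le \kappa C/\lambda$ is never used in the argument also matches the paper, where that bound only matters when the lemma is later applied with $\bu = \bu^{\lambda,\psi}$.
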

\begin{proof}
Firstly, notice that 
    \begin{align}
    \begin{split}
    \|\bu^{n+1} - \bu\|^2_{\mathcal{H}} &= \|\bu^{n} - \eta_n\tilde{\nabla}_{\bu}E^{\lambda, \psi}_{n}(\bu^n) - \bu\|^2_{\mathcal{H}}\\
    &= \langle\bu^{n} - \eta_n\tilde{\nabla}_{\bu}E^{\lambda, \psi}_{n}(\bu^n) - \bu, \bu^{n} - \eta_n\tilde{\nabla}_{\bu}E^{\lambda, \psi}_{n}(\bu^n) - \bu\rangle\\
    & =\|\bu^n - \bu\|^2_{\mathcal{H}} - 2\eta_n \langle\bu^n - \bu, \nabla_{\bu}E^{\lambda, \psi}_{n}(\bu^n)\rangle - 2\eta_n\langle\bu^n - \bu, \tilde{\nabla}_{\bu}E^{\lambda, \psi}_{n}(\bu^n) \\
    & \quad - \nabla_{\bu}E^\lambda_{n}(\bu^n)\rangle + \eta_n^2\|\tilde{\nabla}_{\bu}E^{\lambda, \psi}_{n}(\bu^n)\|^2_{\mathcal{H}}
    \end{split}
    \label{eq1_lemma}
    \end{align}
By the Cauchy Schwartz inequality and Lemma \ref{lemma4}, we have 
\begin{align}
| \langle\bu^n - \bu, \tilde{\nabla}_{\bu}E^{\lambda, \psi}_{n}(\bu^n) - \nabla_{\bu}E^\lambda_{n}(\bu^n)\rangle |& \leq \|\bu^n - \bu\|_{\mathcal{H}}\|\tilde{\nabla}_{\bu}E^{\lambda, \psi}_{n}(\bu^n) - \nabla_{\bu}E^{\lambda, \psi}_{n}(\bu^n)\|_{\mathcal{H}}\\
&\leq \frac{\epsilon_n}{\eta_n} \|\bu^n - \bu\|_{\mathcal{H}}.
\label{cauchy}
\end{align}
Substituting Eq. \eqref{cauchy} in Equation \eqref{eq1_lemma} and rearranging terms we obtain
\begin{align}
\label{eq2_lemma}
    \langle \bu^n \!-\! \bu, \nabla_{\bu} E^{\lambda, \psi}_n(\bu^n) \rangle \!\leq\! \frac{\|\bu^n \!-\! \bu\|^2_{\mathcal{H}} -\|\bu^{n+1} \!-\! \bu\|^2_{\mathcal{H}}}{2\eta_n} \!+ \!\frac{\epsilon_n}{\eta_n}\|\bu^n \!-\! \bu\|_{\mathcal{H}} \!+\! \frac{\eta_n}{2}\|\tilde{\nabla}_{\bu}E^{\lambda, \psi}_{n}(\bu^n)\|^2_{\mathcal{H}}.
\end{align}
Then, we have
\begin{align}
E^{\lambda, \psi}_n(\bu^n) - E^{\lambda, \psi}_n(\bu) & \leq  \langle \bu^n - \bu, \nabla_{\bu} E^{\lambda, \psi}_n(\bu^n) \rangle \quad \quad \textup{(By convexity of }E^{\lambda, \psi}_n(\bu^n)), \nonumber \\
& \leq  \!\!\frac{\|\bu^n\! -\! \bu\|^2_{\mathcal{H}} \!-\!\|\bu^{n+1} \!-\! \bu\|^2_{\mathcal{H}}}{2\eta_n} + \frac{\epsilon_n}{\eta_n}\|\bu^n - \bu\|_{\mathcal{H}} + \frac{\eta_n}{2}\|\tilde{\nabla}_{\bu}E^{\lambda, \psi}_{n}(\bu^n)\|^2_{\mathcal{H}}\label{last_eq4}
 \end{align}
Furthermore, 
\begin{align}
\begin{split}
\label{last_eq}
\|\tilde{\nabla}_{\bu}E^{\lambda, \psi}_{n}(\bu^n)\|^2_{\mathcal{H}} &\leq  2\|\tilde{\nabla}_{\bu}E^{\lambda, \psi}_{n}(\bu^n) - {\nabla}_{\bu}E^{\lambda, \psi}_{n}(\bu^n)\|^2_{\mathcal{H}} + 2\|{\nabla}_{\bu}E^{\lambda, \psi}_{n}(\bu^n)\|^2_{\mathcal{H}}\\
 &\leq \frac{2\epsilon_n^2}{\eta_n^2} + 2\|{\nabla}_{\bu}E^{\lambda, \psi}_{n}(\bu^n)\|^2_{\mathcal{H}} \quad \textup{(By Lemma \ref{lemma4})}.
 \end{split}
\end{align}
The Lemma follows from applying Eq \eqref{last_eq} to Eq \eqref{last_eq4}. \hfill
\end{proof}

\begin{lemma}
\label{rate_conv}
Under Assumptions 1-3, for $\epsilon_n = P_2 \eta_n^2$ we have
\[
\Exp\left[E^{\lambda, \psi}(\bu^{N^*}) \!- E^{\lambda, \psi}(\bu^{\lambda, \psi})\right] \leq\! \frac{\|\bu^{\lambda, \psi} \!-\! \bu^0\|_{\mathcal{H}}}{2\sum_{n = 1}^N \eta_n} +\frac{\sum_{n = 1}^N \eta_n^2}{\sum_{n = 1}^N \eta_n\!}\left(\frac{2P_2\kappa C}{\lambda}+4\kappa^2C^2\right)\! + \frac{\sum_{n = 1}^N \epsilon_n^2}{\sum_{n = 1}^N \eta_n}.
\]
\end{lemma}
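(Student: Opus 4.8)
The plan is to chain the per-iteration descent inequality of Lemma \ref{lemma3} into a telescoping sum and then convert the resulting averaged bound into a bound on the best iterate. Throughout I would instantiate Lemma \ref{lemma3} at $\bu = \bu^{\lambda, \psi}$, and let $\mathcal{F}_{n-1}$ denote the information generated by the samples $\bz^1, \ldots, \bz^{n-1}$ that determine $\bu^n$. The key conceptual point is an unbiasedness step: $\bu^n$ is $\mathcal{F}_{n-1}$-measurable while $\bz^n$ is independent of $\mathcal{F}_{n-1}$, and since $E^{\lambda, \psi}_n(\bu) = c^\psi(\bu(\bz^n), \bz^n) + \frac{\lambda}{2}\|\bu\|_{\mathcal{H}}^2$ averages over the fresh sample to give $E^{\lambda, \psi}(\bu)$, the tower property yields $\Exp[E^{\lambda, \psi}_n(\bu^n)] = \Exp[E^{\lambda, \psi}(\bu^n)]$, while $\Exp[E^{\lambda, \psi}_n(\bu^{\lambda, \psi})] = E^{\lambda, \psi}(\bu^{\lambda, \psi})$ because $\bu^{\lambda, \psi}$ is deterministic. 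This is the main obstacle in the sense that everything else is bookkeeping.

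Next I would control the two error terms on the right-hand side. For the gradient term, since $\|\bu^n\|_{\mathcal{H}} \leq \kappa C/\lambda$ by Lemma \ref{Hnorm}(b), Lemma \ref{lemma_grad_bound} gives $\Exp[\|\nabla_{\bu} E^{\lambda, \psi}_n(\bu^n)\|_{\mathcal{H}}^2] \leq 4\kappa^2 C^2$. For the term $\frac{\epsilon_n}{\eta_n}\|\bu^n - \bu^{\lambda, \psi}\|_{\mathcal{H}}$, the triangle inequality together with both parts of Lemma \ref{Hnorm} gives $\|\bu^n - \bu^{\lambda, \psi}\|_{\mathcal{H}} \leq 2\kappa C/\lambda$, and substituting $\epsilon_n = P_2 \eta_n^2$ (so $\frac{\epsilon_n}{\eta_n} = P_2 \eta_n$) turns this into an $\mathcal{O}(\eta_n)$ contribution. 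After taking expectations and applying these bounds, the per-iteration inequality reads
\[
\Exp[E^{\lambda, \psi}(\bu^n)] - E^{\lambda, \psi}(\bu^{\lambda, \psi}) \leq \frac{\Exp\big[\|\bu^n - \bu^{\lambda, \psi}\|_{\mathcal{H}}^2 - \|\bu^{n+1} - \bu^{\lambda, \psi}\|_{\mathcal{H}}^2\big]}{2\eta_n} + \Big(4\kappa^2 C^2 + \frac{2P_2\kappa C}{\lambda}\Big)\eta_n + \frac{\epsilon_n^2}{\eta_n}.
\]

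The key algebraic trick is then to multiply through by $\eta_n$ \emph{before} summing: this replaces the inconvenient $\frac{1}{2\eta_n}$ weight in front of the distance differences by the constant $\frac{1}{2}$, so that the distance terms telescope cleanly even when the step sizes vary. Summing over the iterations starting from the initial decision $\bu^0$ and discarding the nonnegative final distance term leaves $\frac{1}{2}\|\bu^0 - \bu^{\lambda, \psi}\|_{\mathcal{H}}^2$, while the two remaining terms accumulate into $\big(4\kappa^2 C^2 + \frac{2P_2\kappa C}{\lambda}\big)\sum_n \eta_n^2 + \sum_n \epsilon_n^2$.

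Finally, I would pass from the weighted sum to the best iterate. Because $\bu^{N^*}$ minimizes $E^{\lambda, \psi}$ over $\{\bu^1,\ldots,\bu^N\}$, its excess loss cannot exceed the $\eta_n$-weighted average of the excess losses, i.e. $E^{\lambda, \psi}(\bu^{N^*}) - E^{\lambda, \psi}(\bu^{\lambda, \psi}) \leq \frac{\sum_n \eta_n\,(E^{\lambda, \psi}(\bu^n) - E^{\lambda, \psi}(\bu^{\lambda, \psi}))}{\sum_n \eta_n}$, a pointwise inequality that survives taking expectations. Dividing the telescoped bound by $\sum_n \eta_n$ then reproduces the claimed estimate; the only mild care needed is the index shift in the telescope so that the initial iterate $\bu^0$ appears, and I note that the quantity produced by the telescope is the \emph{squared} Hilbert norm $\|\bu^{\lambda, \psi} - \bu^0\|_{\mathcal{H}}^2$.
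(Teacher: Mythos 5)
Your proof is correct and follows essentially the same route as the paper's: instantiate Lemma \ref{lemma3} at $\bu = \bu^{\lambda,\psi}$, take expectations (making explicit the tower-property/unbiasedness step that the paper leaves implicit), bound the gradient term via Lemmas \ref{Hnorm} and \ref{lemma_grad_bound}, bound $\|\bu^n - \bu^{\lambda,\psi}\|_{\mathcal{H}} \leq 2\kappa C/\lambda$ by the triangle inequality, multiply by $\eta_n$ so the distance terms telescope, and finally pass to the best iterate and divide by $\sum_n \eta_n$. Your closing remark is also well taken: the telescoped quantity is indeed the \emph{squared} norm $\|\bu^{\lambda,\psi} - \bu^0\|_{\mathcal{H}}^2$, whereas the paper's statement and proof write it without the square, an apparent typo that your version corrects.
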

\begin{proof}
Taking expectation over data and sampling on both sides of Equation \eqref{eq_lemma5} we have 
\begin{align*}
    &\eta_n\Exp\left[E^{\lambda, \psi}(\bu^n)\! - E^{\lambda, \psi}(\bu)\right] \\
    \leq&   \frac{\Exp\left[\|\bu^n \!\!-\! \bu\|^2_{\mathcal{H}}\! -\! \|\bu^{n+1} \! \!- \bu\|^2_{\mathcal{H}}\! + 2\epsilon_n\|\bu^n\! - \!\bu\|_{\mathcal{H}}\right]}{2}\! +\! \mathbb{E}[\eta^2_n\|\nabla_{\bu}E^{\lambda, \psi}_n(\bu^n)\|^2_{\mathcal{H}}]+ {\epsilon_n^2},\\
    \leq& \frac{\Exp\left[\|\bu^n \!- \bu\|^2_{\mathcal{H}}\! - \|\bu^{n+1} \! \!- \bu\|^2_{\mathcal{H}}\! + 2\epsilon_n\|\bu^n\! - \bu\|_{\mathcal{H}}\right]}{2} + 4\eta_n^2\kappa^2C^2 + \epsilon_n^2,
\end{align*}
where the inequality follows form Lemma \ref{lemma_grad_bound}. Summing over $n$ and evaluating at $\bu = \bu^{\lambda, \psi}$ we obtain
\begin{align}
\label{sum_bound}
\begin{split}
    &\sum_{n=0}^{N}\eta_n\Exp\left[E^{\lambda, \psi}(\bu^n) - E^{\lambda, \psi}(\bu^{\lambda, \psi})\right]\\
     \leq& \frac{1}{2}\|\bu^{\lambda, \psi} - \bu^0\|_{\mathcal{H}} + \sum_{n = 0}^N {\epsilon_n}\|\bu^n - \bu^{\lambda, \psi}\|_\mathcal{H} + 4\kappa^2C^2\sum_{n = 0}^N \eta^2_n+ \sum_{n = 0}^N \epsilon^2_n,\\ \leq&  \frac{1}{2}\|\bu^{\lambda, \psi} - \bu^0\|_{\mathcal{H}} + \sum_{n = 0}^N 2\epsilon_n\frac{\kappa C}{\lambda} + 4\kappa^2C^2\sum_{n = 0}^N \eta^2_n+ \sum_{n = 0}^N \epsilon^2_n  \text{ (Lemma \ref{lemma2a})},\\
     =& \frac{1}{2}\|\bu^{\lambda, \psi} - \bu^0\|_{\mathcal{H}} +\left(\frac{2P_1 \kappa C}{\lambda}+4\kappa^2C^2\right)\sum_{n = 0}^N \eta_n^2 + \sum_{n = 0}^N \epsilon_n^2.
\end{split}
\end{align}
By definition of $\bu^*_{\mathcal{S}}$ we then have
 \begin{align}
 \label{min_bound}
 \left(\sum_{n=1}^N\eta_n\right)\Exp\left[E^{\lambda, \psi}(\bu^*_{\mathcal{S}}) - E^{\lambda, \psi}(\bu^{\lambda, \psi})\right] \leq \sum_{n=1}^N\eta_n\Exp\left[E^{\lambda, \psi}(\bu^n) - E^{\lambda, \psi}(\bu^{\lambda,\psi})\right].
 \end{align}
 Dividing by $\sum_{n = 1}^N \eta_n$ on both sides of Eq. \eqref{min_bound} and applying the inequality \eqref{sum_bound} we obtain
\[
\Exp\left[E^{\lambda, \psi}(\bu^{N^*}) \!- E^{\lambda, \psi}(\bu^{\lambda, \psi})\right] \leq\! \frac{\|\bu^{\lambda, \psi} \!-\! \bu^0\|_{\mathcal{H}}}{2\sum_{n = 1}^N \eta_n} +\frac{\sum_{n = 1}^N \eta_n^2}{\sum_{n = 1}^N \eta_n\!}\left(\frac{2P_2\kappa C}{\lambda}+4\kappa^2C^2\right)\! + \frac{\sum_{n = 1}^N \epsilon_n^2}{\sum_{n = 1}^N \eta_n}.
\]
as desired. \hfill
\end{proof}

\begin{lemma}\label{const-violation}
Suppose that there exists a feasible decision $\hat{\bu}$ and a finite constant $C_0$ such that $c(\hat{\bu}(\bz), \bz)\leq C_0$ for all $\bz\in \mathcal{Z}$. Then, 
\begin{align*}
    \lim_{\psi_\rightarrow \infty} \mathbb{E}\left[ \sum_{q = 1}^Q \max \Big(0, g_q\big(\bu^{\lambda, \psi}(\bz)\big)\Big)^2\right] = 0. 
\end{align*}
\end{lemma}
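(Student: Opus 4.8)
The plan is to exploit the optimality of $\bu^{\lambda, \psi}$ against the feasible comparator $\hat{\bu}$ (which I take to lie in $\mathcal{H}$, so that $\|\hat{\bu}\|_{\mathcal{H}}$ is finite). Writing $\Phi(\bu) := \mathbb{E}_{\bz}\big[\sum_{q=1}^Q \max(0, g_q(\bu(\bz)))^2\big]$ for the expected penalty, the definitions \eqref{augmented_loss2}--\eqref{exp_reg_loss2} give $E^{\lambda, \psi}(\bu) = E(\bu) + \psi\,\Phi(\bu) + \tfrac{\lambda}{2}\|\bu\|_{\mathcal{H}}^2$, and the goal is to show $\Phi(\bu^{\lambda, \psi}) \to 0$ as $\psi \to \infty$. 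Since $\hat{\bu}$ is feasible, $g_q(\hat{\bu}(\bz)) \le 0$ for every $q$ and $\bz$, so $\Phi(\hat{\bu}) = 0$; combined with the hypothesis $c(\hat{\bu}(\bz),\bz) \le C_0$ this yields the $\psi$-independent bound $E^{\lambda, \psi}(\hat{\bu}) = E(\hat{\bu}) + \tfrac{\lambda}{2}\|\hat{\bu}\|_{\mathcal{H}}^2 \le C_0 + \tfrac{\lambda}{2}\|\hat{\bu}\|_{\mathcal{H}}^2 =: C_1$. By optimality of $\bu^{\lambda, \psi}$ we then have $E(\bu^{\lambda, \psi}) + \psi\,\Phi(\bu^{\lambda, \psi}) + \tfrac{\lambda}{2}\|\bu^{\lambda, \psi}\|_{\mathcal{H}}^2 \le C_1$.

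Next I would isolate the penalty. Dropping the nonnegative term $\tfrac{\lambda}{2}\|\bu^{\lambda, \psi}\|_{\mathcal{H}}^2$ gives $\psi\,\Phi(\bu^{\lambda, \psi}) \le C_1 - E(\bu^{\lambda, \psi})$, so it suffices to bound $E(\bu^{\lambda, \psi})$ below by a constant independent of $\psi$; dividing by $\psi$ and letting $\psi \to \infty$ then finishes the argument. This lower bound reduces to controlling $\|\bu^{\lambda, \psi}\|_{\mathcal{H}}$ uniformly in $\psi$: by Assumption \ref{assumption_lip} and Lemma \ref{2norm},
\[
c(\bu^{\lambda, \psi}(\bz),\bz) \ge c(\boldsymbol{0},\bz) - C\|\bu^{\lambda, \psi}(\bz)\|_2 \ge -C_2 - C\kappa\|\bu^{\lambda, \psi}\|_{\mathcal{H}},
\]
where $C_2 := \sup_{\bz \in \mathcal{Z}} |c(\boldsymbol{0},\bz)|$ is finite because $\mathcal{Z}$ is compact (Assumption \ref{assumption_kernel}) and $c(\boldsymbol{0},\cdot)$ is continuous. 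Taking expectations over $\bz$ gives $E(\bu^{\lambda, \psi}) \ge -C_2 - C\kappa\|\bu^{\lambda, \psi}\|_{\mathcal{H}}$.

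Finally I would feed this lower bound back into the optimality inequality, this time \emph{retaining} the quadratic term: writing $x := \|\bu^{\lambda, \psi}\|_{\mathcal{H}}$ and using $\psi\,\Phi(\bu^{\lambda, \psi}) \ge 0$,
\[
\tfrac{\lambda}{2} x^2 - C\kappa\, x - C_2 \le C_1,
\]
a quadratic inequality whose solution set is bounded by a constant $B$ depending only on $\lambda, \kappa, C, C_1, C_2$ and not on $\psi$. Hence $E(\bu^{\lambda, \psi}) \ge -C_2 - C\kappa B$ uniformly in $\psi$, so $\psi\,\Phi(\bu^{\lambda, \psi}) \le C_1 + C_2 + C\kappa B$ and $\Phi(\bu^{\lambda, \psi}) \le (C_1 + C_2 + C\kappa B)/\psi \to 0$, which is the claim. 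The main obstacle is precisely this uniform $\mathcal{H}$-norm bound: one cannot simply quote Lemma \ref{Hnorm}(a), since the Lipschitz constant of the penalized loss $c^\psi$ grows with $\psi$, so the bound must instead be extracted self-consistently from the optimality inequality through the quadratic estimate above, the key point being that the regularizer $\tfrac{\lambda}{2}\|\cdot\|_{\mathcal{H}}^2$ dominates the linear Lipschitz growth of $E$ and thereby pins down $\|\bu^{\lambda, \psi}\|_{\mathcal{H}}$ independently of $\psi$.
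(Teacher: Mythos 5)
Your proposal is correct, and its skeleton matches the paper's: compare $\bu^{\lambda,\psi}$ against the feasible comparator $\hat{\bu}$, whose penalty term vanishes, to get the $\psi$-independent bound $E^{\lambda,\psi}(\bu^{\lambda,\psi}) \leq C_0 + \tfrac{\lambda}{2}\|\hat{\bu}\|_{\mathcal{H}}^2$, then divide by $\psi$. Where you diverge is in how the penalty is isolated from this bound. The paper simply writes $\psi\,\Phi(\bu^{\lambda,\psi}) \leq E^{\lambda,\psi}(\bu^{\lambda,\psi})$, i.e.\ it silently discards $E(\bu^{\lambda,\psi}) + \tfrac{\lambda}{2}\|\bu^{\lambda,\psi}\|_{\mathcal{H}}^2$ from the left-hand side, which is only legitimate when that quantity is nonnegative --- for instance when the cost is nonnegative, or bounded below as in the paper's generalized corollary, where the assumption $c_0 \leq c(\bu,\bz)$ is introduced and $c_0$ is subtracted for exactly this purpose. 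You instead lower-bound $E(\bu^{\lambda,\psi})$ via the Lipschitz property around the reference point $\boldsymbol{0}$ together with a self-consistent quadratic estimate that pins down $\|\bu^{\lambda,\psi}\|_{\mathcal{H}} \leq B$ uniformly in $\psi$; your observation that Lemma \ref{Hnorm}(a) cannot be quoted here (its constant for $c^\psi$ grows with $\psi$) is correct, and indeed the paper never invokes it in this proof. What each buys: the paper's argument is three lines but carries an implicit lower-bound hypothesis on the cost that is not stated in the lemma (and is violated, e.g., by profit-type objectives like the paper's own shipment problem); yours is longer but handles signed costs, at the price of the extra requirement $\sup_{\bz}|c(\boldsymbol{0},\bz)| < \infty$, which you justify by compactness of $\mathcal{Z}$ plus continuity of $c(\boldsymbol{0},\cdot)$ --- a condition the paper's Assumptions 1--3 do not explicitly supply, so you should state it (or the weaker $\inf_{\bz} c(\boldsymbol{0},\bz) > -\infty$) as a hypothesis. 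Your uniform bound $\|\bu^{\lambda,\psi}\|_{\mathcal{H}} \leq B$ is also a byproduct of independent interest that the paper's route does not provide.
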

\begin{proof}
By definition of $\bu^{\lambda, \psi}$, we know
\[
\psi \mathbb{E}\left[ \sum_{q = 1}^Q \max \Big(0, g_q\big(\bu^{\lambda, \psi}(\bz)\big)\Big)^2\right]  \leq \mathbb{E}_{\bz}\left[c(\hat{\bu}(\bz), \bz)\right] + \frac{\lambda}{2} \|\hat{\bu}\|^2_{\mathcal{H}}\leq C_0+ \frac{\lambda}{2} \|\hat{\bu}\|^2_{\mathcal{H}}.
\]
Therefore, for any violation tolerance $\delta>0$ we can choose $\psi \geq \frac{2C_0+ \lambda \|\hat{\bu}\|^2_\mathcal{H}}{2\delta}$ to ensure $\mathbb{E}\left[ \sum_{q = 1}^Q \max \Big(0, g_q\big(\bu^{\lambda, \psi}(\bz)\big)\Big)^2\right] \leq \delta$  and the lemma follows.
\end{proof}
\hfill \\

\section{Main Theorems}\label{appendix:theorems}
In this section we state and proof a more general version of Theorem \ref{convergence-rates} and Corollary \ref{corollary}, which correspond to the case $\psi = 0$.
\begin{theorem}[Generalization of Theorem 1]\label{convergence-rates-final}
Let $\bu_{\mathcal{S}}^* \coloneqq \argmin_{\bu \in \{\bu^1, \hdots, \bu^N\}} E_{\mathcal{S}}^{\lambda, \psi}(\bu)$ be the decisions generated by Algorithm \ref{SMOK} when given the set $\mathcal{S} = \{\bz^n\}_{n = 1}^N$ as input, and let $\bu^{\lambda, \psi}$ be the true minimizer of $E^{\lambda,\psi}(\bu)$ over $\mathcal{H}$. If we use constant step-size $\eta$ 
and constant error bounds $\epsilon = P_2\eta^2$ for some constant $P_2 >0$, then under Assumptions 1-3,  we have that

\[
\Exp\left[E^{\lambda, \psi}(\bu_{\mathcal{S}}^*)  - E^{\lambda, \psi}(\bu^{\lambda, \psi})\right] \leq  \mathcal{O}\left(\frac{\eta}{\lambda}\right).
\]
\end{theorem}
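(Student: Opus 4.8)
The plan is to read off the statement as a specialization of Lemma~\ref{rate_conv} to the constant regime $\eta_n\equiv\eta$, $\epsilon_n\equiv\epsilon=P_2\eta^2$, and then to isolate the dominant term. The analytical substance is already contained in Lemma~\ref{rate_conv}, which assembles three ingredients: the per-step descent inequality of Lemma~\ref{lemma3} (whose $\|\bu^n-\bu\|_{\mathcal H}^2-\|\bu^{n+1}-\bu\|_{\mathcal H}^2$ terms telescope once summed over $n$), the second-moment bound $\Exp[\|\nabla_{\bu}E^{\lambda,\psi}_n(\bu)\|_{\mathcal H}^2]\le 4\kappa^2C^2$ of Lemma~\ref{lemma_grad_bound}, and the projection-error control $\|\nabla-\tilde\nabla\|_{\mathcal H}\le\epsilon_n/\eta_n$ of Lemma~\ref{lemma4}. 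So the first step is simply to invoke that lemma and insert the constant choices.

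Next I would evaluate the three sums, $\sum_{n=1}^N\eta_n=N\eta$, $\sum_{n=1}^N\eta_n^2=N\eta^2$, and $\sum_{n=1}^N\epsilon_n^2=NP_2^2\eta^4$, and substitute them into the bound. For the initialization term I would invoke Lemma~\ref{Hnorm}: since $\|\bu^{\lambda,\psi}\|_{\mathcal H}\le\kappa C/\lambda$ and the algorithm is initialized with $\|\bu^0\|_{\mathcal H}\le\kappa C/\lambda$, the triangle inequality gives $\|\bu^{\lambda,\psi}-\bu^0\|_{\mathcal H}\le 2\kappa C/\lambda$, so this term is $O\!\big(1/(N\eta\lambda^2)\big)$. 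The remaining two collapse to $\eta\big(2P_2\kappa C/\lambda+4\kappa^2C^2\big)$ and $P_2^2\eta^3$, respectively. As $\lambda\to0$ the middle term dominates and equals $\mathcal O(\eta/\lambda)$, while $P_2^2\eta^3=\mathcal O(\eta)\subseteq\mathcal O(\eta/\lambda)$; this yields the claimed bound.

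The step I expect to require the most care is the identification of the algorithm's output with the object that Lemma~\ref{rate_conv} actually controls. The averaging argument bounds the \emph{population} suboptimality of the iterate minimizing $E^{\lambda,\psi}$, since only for that iterate does $\min_n E^{\lambda,\psi}(\bu^n)\le\frac1N\sum_n E^{\lambda,\psi}(\bu^n)$ hold; but the theorem returns $\bu^*_{\mathcal S}=\argmin_n E^{\lambda,\psi}_{\mathcal S}(\bu^n)$, the minimizer of the \emph{empirical} loss, and the analogous inequality points the wrong way for it. To bridge this I would introduce the uniform gap $G\coloneqq\sup_{\|\bu\|_{\mathcal H}\le\kappa C/\lambda}\big|E^{\lambda,\psi}(\bu)-E^{\lambda,\psi}_{\mathcal S}(\bu)\big|$; since all iterates and $\bu^{\lambda,\psi}$ lie in this ball by Lemma~\ref{Hnorm}, comparing $\bu^*_{\mathcal S}$ empirically against the population-optimal iterate and paying $G$ on each transfer gives
\[
E^{\lambda,\psi}(\bu^*_{\mathcal S})-E^{\lambda,\psi}(\bu^{\lambda,\psi})\le \big[\text{averaging bound}\big]+2G .
\]
Taking expectations, $\Exp[G]$ is controlled by a uniform concentration estimate over the RKHS ball using the Lipschitz Assumption~\ref{assumption_lip} together with the boundedness from Lemma~\ref{Hnorm}, and it decays with $N$; together with the $O(1/(N\eta\lambda^2))$ initialization term this confirms that in the large-$N$ regime (e.g.\ the scaling $\eta=P_1/\sqrt N$ later used in Corollary~\ref{corollary}) the bound is governed by the $\mathcal O(\eta/\lambda)$ term, as asserted.
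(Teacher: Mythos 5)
Your proposal follows the same route as the paper: the paper's entire proof of this theorem is a one-line invocation of Lemma~\ref{rate_conv} with $\eta_n=\eta$ and $\epsilon_n=P_2\eta^2$, followed by reading off the middle term $\frac{\sum_n\eta_n^2}{\lambda\sum_n\eta_n}=\frac{\eta}{\lambda}$ as dominant, exactly as in your first two steps. What you flag as "the step requiring the most care" is, however, worth dwelling on: it is not handled in the paper at all, but silently absorbed into the proof of Lemma~\ref{rate_conv}, where inequality \eqref{min_bound} is asserted ``by definition of $\bu^*_{\mathcal{S}}$.'' That assertion is only valid if $\bu^*_{\mathcal{S}}$ were the minimizer of the \emph{population} loss $E^{\lambda,\psi}$ over the iterates; since Algorithm~\ref{SMOK} outputs the minimizer of the \emph{empirical} loss $E^{\lambda,\psi}_{\mathcal{S}}$ (and each iterate $\bu^n$ is itself correlated with the samples entering $E^{\lambda,\psi}_{\mathcal{S}}$), the inequality does not follow by definition. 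Your uniform-concentration bridge --- paying $2G$ with $G=\sup_{\|\bu\|_{\mathcal{H}}\le\kappa C/\lambda}|E^{\lambda,\psi}(\bu)-E^{\lambda,\psi}_{\mathcal{S}}(\bu)|$, legitimized by Lemma~\ref{Hnorm} --- is a genuine repair that the paper lacks, and it is the right kind of argument (Lipschitz loss plus a bounded RKHS ball gives a Rademacher/covering bound on $\Exp[G]$).

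Be explicit, though, about what the repair costs: $\Exp[G]$ decays like $N^{-1/2}$ up to factors in $\kappa C/\lambda$, and the initialization term is of order $1/(\lambda N\eta)$ (or $1/(\lambda^2N\eta)$ with the squared norm that the telescoping actually produces); neither is $\mathcal{O}(\eta/\lambda)$ for arbitrary $N$, $\eta$, $\lambda$. So your proof, as written, establishes the stated bound only in a regime such as $\eta\gtrsim N^{-1/2}$ --- which you acknowledge, and which is the regime used in Corollary~\ref{corollary} --- whereas the theorem as stated carries no such restriction. This is not a defect of your argument relative to the paper's: the paper's bound also silently discards the $1/(\lambda N\eta)$ term, and its treatment of the empirical/population argmin mismatch is simply incorrect as written. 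Your version is the more honest one; it should just state the regime hypothesis explicitly rather than leaving it to the final sentence.
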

\begin{proof}
Applying Lemma \ref{rate_conv} with $\eta_n = \eta$ yields
\begin{align*}
\Exp\left[E^{\lambda, \psi}(\bu_{\mathcal{S}}^*) - E^\lambda(\bu^{\lambda, \psi})\right] \leq \mathcal{O}\left(\frac{\sum_{n = 1}^N \eta_n^2}{\lambda \sum_{n = 1}^N \eta_n}\right)  = \mathcal{O}\left(\frac{\eta}{\lambda }\right),
\end{align*}
as desired. 
\hfill
\end{proof}

\begin{corollary}[Generalization of Corollary 1]\label{corollary-final}
Suppose that there exists a feasible decision $\hat{\bu}$ and finite constants $c_0, C_0$ such that
$c(\hat{\bu}(\bz), \bz)\leq C_0$  and $c_0 \leq c(\bu , \bz)$ for all $\bz\in \mathcal{Z}$ and for all scalar arguments $\bu$. Let $\bu^*$ be the true minimizer of $E(\cdot)$ over $\mathcal{F}$ and let $\bu^\psi$ be the true minimizer of $E^\psi(\cdot)$ over $\mathcal{F}$. If we use constant step-size with $\eta = \frac{P_1}{\sqrt{N}}<\frac{1}{\lambda}$, and $P_1 >0$, constant error bounds $\epsilon = P_2\eta^2$ for some constant $P_2 >0$, and regularization parameter $\lambda$ such that $\lambda \xrightarrow[N\to\infty]{} 0$ and $\lambda \sqrt{N} \xrightarrow[N\to \infty]{}\infty$, then under Assumptions 1-4  we have that\\ 

\begin{itemize}
\item[a)] $\lim_{\psi\rightarrow \infty} \lim_{N \rightarrow \infty}\mathbb{E}\left[ \sum_{q = 1}^Q \max \Big(0, g_q\big(\bu_{\mathcal{S}}^*(\bz)\big)\Big)^2\right] = 0$.
\item[b)] $ \lim_{N \rightarrow \infty}\Exp\left[\big|E^\psi(\bu^*_\mathcal{S})  - E^\psi(\bu^\psi)\big|\right] = 0$ for all $\psi>0$.
\item[c)] $\lim_{\psi\rightarrow \infty} \lim_{N \rightarrow \infty}\Exp\left[E(\bu^*_\mathcal{S})\right] \leq E(\bu^*)$.
\end{itemize}

\end{corollary}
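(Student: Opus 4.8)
The plan is to prove the three claims in the order (b), (a), (c), since (b) is the workhorse and the other two are short consequences. Throughout I write $V(\bu) \coloneqq \Exp_{\bz}[\sum_{q=1}^Q \max(0, g_q(\bu(\bz)))^2]$ for the expected violation, so that $E^\psi(\bu) = E(\bu) + \psi V(\bu)$ and $E^{\lambda,\psi}(\bu) = E^\psi(\bu) + \tfrac{\lambda}{2}\|\bu\|_{\mathcal H}^2$. The two ingredients I would lean on are Theorem \ref{convergence-rates-final}, which gives $\Exp[E^{\lambda,\psi}(\bu^*_\mathcal S) - E^{\lambda,\psi}(\bu^{\lambda,\psi})] \le \mathcal O(\eta/\lambda)$, and the universality of the kernels (Assumption 1), which makes $\mathcal H$ dense in $\mathcal F = C(\mathcal Z)$ in the sup-norm while also giving $\mathcal H \subseteq \mathcal F$ (universal kernels are continuous). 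As a preliminary I would record that, when the $g_q$ are convex and Lipschitz, $c^\psi(\cdot,\bz)$ is convex and locally Lipschitz on the bounded set $\{\|\bu\|_2 \le \kappa^2 C/\lambda\}$ that contains all iterates (Lemmas \ref{2norm} and \ref{Hnorm}), so Assumptions 2 and 3 transfer to the penalized loss and every earlier lemma, as well as the Theorem, applies verbatim with the $\psi$-superscripts.

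For (b), fix $\psi$. The lower bound is free: since $\bu^*_\mathcal S \in \mathcal H \subseteq \mathcal F$ and $\bu^\psi$ minimizes $E^\psi$ over $\mathcal F$, we have $E^\psi(\bu^*_\mathcal S) - E^\psi(\bu^\psi) \ge 0$. For the upper bound I chain three estimates. First drop the regularizer, $E^\psi(\bu^*_\mathcal S) \le E^{\lambda,\psi}(\bu^*_\mathcal S)$; next apply the Theorem, $\Exp[E^{\lambda,\psi}(\bu^*_\mathcal S)] \le \Exp[E^{\lambda,\psi}(\bu^{\lambda,\psi})] + \mathcal O(\eta/\lambda)$; finally, for any fixed $\bu_\delta \in \mathcal H$, optimality of $\bu^{\lambda,\psi}$ over $\mathcal H$ gives $E^{\lambda,\psi}(\bu^{\lambda,\psi}) \le E^\psi(\bu_\delta) + \tfrac{\lambda}{2}\|\bu_\delta\|_{\mathcal H}^2$. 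Choosing $\bu_\delta$ to approximate $\bu^\psi$ within $\delta$ in sup-norm (density) and letting $N \to \infty$ kills both $\tfrac{\lambda}{2}\|\bu_\delta\|_{\mathcal H}^2$ (since $\lambda \to 0$ with $\bu_\delta$ fixed) and $\mathcal O(\eta/\lambda) = \mathcal O(1/(\lambda\sqrt N))$ (since $\lambda\sqrt N \to \infty$), leaving $\limsup_N \Exp[E^\psi(\bu^*_\mathcal S)] \le E^\psi(\bu_\delta) \le E^\psi(\bu^\psi) + L(\psi)\delta$, with $L(\psi)$ the local Lipschitz constant of $c^\psi$. Sending $\delta \to 0$ and combining with the lower bound yields $\lim_N \Exp[|E^\psi(\bu^*_\mathcal S) - E^\psi(\bu^\psi)|] = 0$.

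Claims (a) and (c) then fall out of (b). For (a), using $E(\bu^*_\mathcal S) \ge c_0$ and $\psi V = E^\psi - E$, I write $\psi\,\Exp[V(\bu^*_\mathcal S)] = \Exp[E^\psi(\bu^*_\mathcal S)] - \Exp[E(\bu^*_\mathcal S)] \le \Exp[E^\psi(\bu^*_\mathcal S)] - c_0$; taking $N\to\infty$ through (b) and using $E^\psi(\bu^\psi) \le E^\psi(\hat\bu) = E(\hat\bu) \le C_0$ (the feasible witness $\hat\bu$ has zero violation) gives $\limsup_N \Exp[V(\bu^*_\mathcal S)] \le (C_0 - c_0)/\psi \to 0$ as $\psi\to\infty$, and nonnegativity of $V$ closes it. For (c), drop the nonnegative penalty, $E(\bu^*_\mathcal S) \le E^\psi(\bu^*_\mathcal S)$, take $N\to\infty$ through (b) to get $\lim_N \Exp[E(\bu^*_\mathcal S)] \le E^\psi(\bu^\psi)$, and then let $\psi\to\infty$.

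The main obstacle sits in two places. The first is the density step of (b): I must transfer from the continuous minimizer $\bu^\psi \in \mathcal F$ to an RKHS element while controlling the $\psi$-weighted quadratic penalty, which is why it is essential that $\psi$ be held fixed (so $L(\psi)$ is a finite constant) and that $\bu_\delta$ be chosen independently of $N$ (so the vanishing of $\tfrac{\lambda}{2}\|\bu_\delta\|_{\mathcal H}^2$ and of $\mathcal O(1/(\lambda\sqrt N))$ can be read off the assumed scaling of $\lambda$). The second and genuinely delicate point is the final step of (c): the tools deliver only $\lim_\psi\lim_N \Exp[E(\bu^*_\mathcal S)] \le \lim_\psi E^\psi(\bu^\psi)$, and since $E^\psi(\bu) \ge E(\bu) \ge E(\bu^*)$ one has $E^\psi(\bu^\psi) \ge E(\bu^*)$ for every $\psi$. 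Reaching the claimed bound $\le E(\bu^*)$ therefore hinges on consistency of the quadratic-penalty method, namely that $\lim_\psi E^\psi(\bu^\psi)$ equals the constrained optimum and that this value coincides with $E(\bu^*)$ (as holds when the unconstrained minimizer is itself feasible, or under a constraint qualification). Establishing that penalty limit, the analogue of Lemma \ref{const-violation} applied to $\bu^\psi$ together with lower semicontinuity of $E$, is where I would concentrate the remaining effort.
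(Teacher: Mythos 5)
Your parts (a) and (b) are correct and, at bottom, follow the same route as the paper: drop the regularizer, invoke Theorem \ref{convergence-rates-final} under the scaling $\eta = P_1/\sqrt{N}$, $\lambda \to 0$, $\lambda\sqrt{N}\to\infty$, and use universality to pass from $\mathcal{H}$ to $\mathcal{F}$. Your treatment of (b) is in fact more careful than the paper's: the paper introduces $\bu^{\psi}_{\mathcal{H}}$, ``the minimizer of $E^{\psi}$ over $\mathcal{H}$,'' and asserts that $E^{\lambda,\psi}(\bu^{\lambda,\psi}) - E^{\psi}(\bu^{\psi}_{\mathcal{H}})$ vanishes with $\lambda$ and that $E^{\psi}(\bu^{\psi}_{\mathcal{H}}) = E^{\psi}(\bu^{\psi})$ by universality; this presumes a minimizer over $\mathcal{H}$ exists and has finite Hilbert norm, which need not hold. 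Your substitute---compare $\bu^{\lambda,\psi}$ against a fixed $\delta$-approximant $\bu_\delta \in \mathcal{H}$ of $\bu^{\psi}$, let $N\to\infty$ to kill both $\tfrac{\lambda}{2}\|\bu_\delta\|_{\mathcal{H}}^2$ and $\mathcal{O}(\eta/\lambda)$, then send $\delta \to 0$---avoids this, at the price of the local Lipschitz estimate for $c^{\psi}$ that you correctly identify as necessary. Your (a) routes through (b) with the feasible witness $\hat{\bu}$ compared within $\mathcal{F}$, whereas the paper compares $\bu^{\lambda,\psi}$ directly against $\hat{\bu}$ (which strictly requires $\hat{\bu}\in\mathcal{H}$); both yield the same $\mathcal{O}(1/\psi)$ bound.

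Part (c), however, you leave genuinely unfinished, and the missing step is precisely the one-line idea the paper uses. You correctly reduce the problem to showing $\lim_{\psi} E^{\psi}(\bu^{\psi}) \le E(\bu^*)$, and then stop, saying this requires penalty-method consistency, lower semicontinuity of $E$, or a constraint qualification. It requires none of these once $\bu^*$ is read as the paper intends: $\bu^*$ is the minimizer of the \emph{constrained} problem \eqref{constrained_version}, hence feasible, so every penalty term vanishes at $\bu^*$ and
\begin{equation*}
E^{\psi}(\bu^{\psi}) \;\le\; E^{\psi}(\bu^*) \;=\; E(\bu^*) \qquad \text{for every } \psi > 0,
\end{equation*}
which is exactly the paper's step ``by optimality of $\bu^{\psi}$.'' You were (reasonably) misled by the corollary's wording ``true minimizer of $E(\cdot)$ over $\mathcal{F}$,'' which literally describes the unconstrained minimizer; under that literal reading your objection is sound---one should not expect (c) to hold when the constraints bind, since by part (a) the algorithm's decisions become asymptotically feasible and so their cost is pinned near the constrained optimum, which then exceeds $E(\bu^*)$. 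So the concrete gap is that you never commit to the feasibility of $\bu^*$, which is the single fact that closes (c); the penalty-consistency machinery you propose instead is unnecessary under the intended reading and, under the unconstrained reading, could not deliver the claimed inequality anyway.
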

\begin{proof}
\textit{Part a)} We have
\begin{align*}
    \lim_{N\rightarrow \infty}  \psi \mathbb{E}\left[ \sum_{q = 1}^Q \max \Big(0, g_q\big(\bu_{\mathcal{S}}^*(\bz)\big)\Big)^2\right] &\leq \lim_{N\rightarrow \infty} \Exp\left[E^{\lambda, \psi}(\bu_{\mathcal{S}}^*) \right]& &\!\!\!\!\!\!\!- c_0,\\  & \leq E^{\lambda, \psi}(\bu^{\lambda, \psi}) - c_0& & \text{(by Theorem \ref{convergence-rates})},\\
    &\leq  E^{\lambda, \psi}(\hat{\bu}) -c_0& & \text{(by optimality of }\bu^{\lambda, \psi}),\\
    &= E^{\lambda}(\hat{\bu}) - c_0& & \text{(by feasibility of $\hat{\bu}$)},
\end{align*}
and therefore 
\[
 0\leq \lim_{\psi \rightarrow \infty} \lim_{N\rightarrow \infty}   \mathbb{E}\left[ \sum_{q = 1}^Q \max \Big(0, g_q\big(\bu_{\mathcal{S}}^*(\bz)\big)\Big)^2\right] \leq \lim_{\psi \rightarrow \infty} \frac{E^{\lambda}(\hat{\bu}) - c_0}{\psi} = 0,
\]
\textit{Part b)}
Let $\bu^\psi_{\mathcal{H}}$ be the true minimizer of $E^{\psi}$ over $\mathcal{H}$. Adding and subtracting terms we obtain
\begin{align*}
E^{\psi}(\bu^*_\mathcal{S})  - E^{\psi}(\bu^{\psi}) = &\left(E^{\psi}(\bu^*_\mathcal{S}) - E^{\lambda, \psi}(\bu^*_\mathcal{S})\right) + \left(E^{\lambda, \psi}(\bu_{\mathcal{S}}^*) - E^{\lambda, \psi}(\bu^{\lambda, \psi})\right)\\
 & + \left(E^{\lambda, \psi}(\bu^{\lambda, \psi}) - E^{\psi}(\bu^{\psi}_{\mathcal{H}})\right)+  \left( E^{\psi}(\bu^{\psi}_{\mathcal{H}}) -E^{\psi}(\bu^{\psi})\right).
\end{align*}
The first term on the right hand side is negative, the second term vanishes because of Theorem \ref{convergence-rates}, the third term vanishes with $\lambda$, and the fourth term is zero because we use universal kernels (Assumption \ref{assumption_kernel}). Since $E^{\psi}(\bu^*_\mathcal{S})  - E^{\psi}(\bu^{\psi})$ is non-negative, we obtain
\[\lim_{N\rightarrow \infty}\Exp\left[\big|E^{\psi}(\bu^*_\mathcal{S})  - E^{\psi}(\bu^{\psi})\big|\right]= 0 \quad \forall \psi\geq 0.\]
\textit{Part c)} We have
\begin{align*}
    &\lim_{N\rightarrow \infty} \Exp\left[ E^\psi(\bu_{\mathcal{S}}^*)\right] = \lim_{N\rightarrow \infty} \Exp\left[ E^\psi(\bu_{\mathcal{S}}^*) - E^\psi(\bu^\psi)\right] + E^\psi(\bu^\psi)\\
    \implies &\lim_{\psi \rightarrow \infty} \lim_{N\rightarrow \infty} \Exp\left[ E^\psi(\bu_{\mathcal{S}}^*)\right] = \lim_{\psi \rightarrow \infty}\lim_{N\rightarrow \infty} \Exp\left[ E^\psi(\bu_{\mathcal{S}}^*) - E^\psi(\bu^\psi)\right] +\lim_{\psi \rightarrow \infty} E^\psi(\bu^\psi)\\
    \implies &\lim_{\psi \rightarrow \infty} \lim_{N\rightarrow \infty} \!\Exp\left[ E(\bu_{\mathcal{S}}^*)\right]  = \!\!\lim_{\psi \rightarrow \infty}\lim_{N\rightarrow \infty} \Exp\left[ E^\psi(\bu_{\mathcal{S}}^*) \!- \!E^\psi(\bu^\psi)\right]\!\! +\!\!\!\lim_{\psi \rightarrow \infty} \!\!E^\psi(\bu^\psi)& &\hspace{-1.1cm}\text{(By part a))}\\
    \implies&\lim_{\psi \rightarrow \infty} \lim_{N\rightarrow \infty} \Exp\left[ E(\bu_{\mathcal{S}}^*)\right] \leq  \lim_{\psi \rightarrow \infty}E^\psi(\bu^\psi)& &\hspace{-1.1cm}\text{(By part b))}\\
    \implies&\lim_{\psi \rightarrow \infty} \lim_{N\rightarrow \infty} \Exp\left[ E(\bu_{\mathcal{S}}^*)\right] \leq  E(\bu^*),& &\hspace{-2.4cm}\text{(By optimality of $\bu^\psi$)}\\
\end{align*}
as desired.
\hfill \end{proof}
\section{Finding Lower Bounds}
\label{lower_bound}
We emphasize that we only need to find lower bounds for the case in which the dimension of the data and the dimension of the controls is equal to $1$; since the experiments run for multidimensional cases were designed to have the same objective value as the one dimensional case. The exact problem we want to lower bound is then
\begin{align}
    \min_{\bu_{1:T}} \quad & \mathbb{E}_{ \bw | \bx}\left[\sum_{t = 1}^T 2\left[s_t\right]^+ + \left[-s_t\right]^+ \bigg \vert \: \bx = \bx_0   \right]\\
    \text{s.t.}\quad  & s_t = s_{t-1} + u_t - w_t\\
    & u_t \geq 0& &\forall t\in [T],\label{lowerbound_constraint}\\
    & u_t \leq 150& &\forall t\in [T], \label{upperbound_constraint}\\
    & u_t + u_{t+1} \leq 200& &\forall t\in [T-1].
\end{align}
The demands $w_t$ were generated as a linear function of the covariates with some added noise; specifically, $w_t = \alpha_t x + \epsilon_t$, where $\epsilon_t$ was sampled from a standard distribution and the constants $\alpha_t$ were selected to be close to $50$ in order for the triangular constraints to be relevant. In fact, for the specified parameters, we found that the constraints \eqref{upperbound_constraint} and \eqref{lowerbound_constraint} are quite lose and we can find a good lower bound for the the optimal objective value by removing these constraints. The problem to solve can then be simplified as 
\begin{align}
    \min_{\bu_{1:T}}  & \mathbb{E}_{\boldsymbol{\epsilon}}\left[\sum_{t = 1}^T 2\left[\sum_{i = 1}^t u_i(x_0, \boldsymbol{\epsilon}_{1:i-1}) - \alpha_ix_0 - \epsilon_i\right]^+ \!\!\!+  \left[-\sum_{i = 1}^t u_i(x_0, \boldsymbol{\epsilon}_{1:i-1})  - \alpha_i x_0 - \epsilon_i \right]^+  \right]\nonumber \\
    \text{s.t.}\quad  & u_t(x_0, \boldsymbol{\epsilon}_{1:t-1}) + u_{t+1}(x_0, \boldsymbol{\epsilon}_{1:t}) \leq 200 \quad \forall t\in [T-1]. \label{lower_bound_problem}
\end{align}
To solve the problem above, we use the fact that if $\epsilon$ has a normal distribution then the function $f(a) = \mathbb{E}_\epsilon\left[2[a - \epsilon]^+ + [\epsilon - a]^+\right]$ is strictly convex and 
\begin{align}
   a_0 \coloneqq &\argmin_a \: \mathbb{E}_\epsilon\left[2[a - \epsilon]^+ + [\epsilon - a]^+\right] \\
   =& \min_a \: \int_{-\infty}^a 2(a - x)\frac{e^{-x^2/2}}{\sqrt{2\pi}}\,dx  + \int_a^\infty (x - a)\frac{e^{-x^2/2}}{\sqrt{2\pi}}\,dx \nonumber\\
   =& \min_a \: \frac{3e^{-a^2/2}}{\sqrt{2\pi}} + \frac{3}{2}a\left(1 + \text{erf}\left({\frac{a}{\sqrt{2}}}\right)\right) - a \nonumber \\
   =& -\sqrt{2}\text{erf}^{-1}\left(\frac{1}{3}\right).
   \label{optimum_a}
\end{align}
We will show how to exactly solve problem \eqref{lower_bound_problem} in the case $T=2$ (the analysis is similar for cases $T=3, 4, 5$). Suppose then that $T=2$. For a fix value of $u_1(x_0)$, the optimization problem \eqref{lower_bound_problem} over $u_2(x_0, \epsilon_1)$ becomes 
\begin{align*}
    \min_{u_2}&  \mathbb{E}_{\epsilon_2}\left[ 2\left[\sum_{i = 1}^2 u_i(x_0, \boldsymbol{\epsilon}_{1:i-1}) - \alpha_ix_0 - \epsilon_i\right]^+ \!\!+  \left[-\sum_{i = 1}^2 u_i(x_0, \boldsymbol{\epsilon}_{1:i-1})  - \alpha_i x_0 - \epsilon_i \right]^+  \right]\\
    \text{s.t.}\quad  & u_2(x_0, \boldsymbol{\epsilon}_{1})\leq 200 - u_1(x_0).
\end{align*}
Applying the result from Eq. \eqref{optimum_a} we obtain 
\[u^*_2(x_0, \epsilon_1) = \min\{(\alpha_1 + \alpha_2)x_0 + \epsilon_1 - u_1(x_0)+ a_0, 200 - u_1(x_0)\},\]
which implies that the term $\sum_{i = 1}^2 u_i(x_0, \boldsymbol{\epsilon}_{1:i-1}) - \alpha_ix_0 - \epsilon_i$ evaluated at $u^*_2(x_0, \epsilon_1)$ is equal to $\min\{ a_0 - \epsilon_2, 200 -(\alpha_1 + \alpha_2)x_0 - \epsilon_1 -\epsilon_2)$, which is  independent of $u_1(x_0)$. We can then find the optimal $u_1$ by solving
\begin{align*}
    \min_{u_1} \quad & \mathbb{E}_{\epsilon_1}\left[2\left[ u_1(x_0) - \alpha_1x_0 - \epsilon_1\right]^+ +  \left[\alpha_1 x_0 + \epsilon_1 - u_1(x_0) \right]^+  \right]\\
    \text{s.t.}\quad  & u_1(x_0,)\leq 200 ,
\end{align*}
which again, using Eq. \eqref{optimum_a} yields $u^*_1(x_0) = \min\{\alpha_1 x_0 + a_0, 200\}$.

%
%
%

\end{appendix}

\end{document}